\numberwithin{theorem}{section}
\newcommand{\TheTitle}{Globally convergent Jacobi-type algorithms \\ for simultaneous orthogonal\\ symmetric tensor diagonalization}
\newcommand{\TheShortTitle}{Globally convergent Jacobi-type algorithms}
\newcommand{\TheAuthors}{J. Li, K. Usevich, and P. Comon}
\headers{\TheShortTitle}{\TheAuthors}
\title{{\TheTitle}\thanks{Submitted to the editors DATE.
\funding{This work was supported by the ERC project ``DECODA'' no.320594, in the frame of the European program FP7/2007-2013.
The first author was partially supported by the National Natural Science Foundation of China (No.11601371).}}}
\author{
  Jianze Li\thanks{{School of Mathematics, Tianjin University, Tianjin 300072, China}
    (\email{lijianze@tju.edu.cn}).}
  \and
  Konstantin Usevich\thanks{GIPSA-Lab, CNRS and Univ. Grenoble Alpes, France (\email{firstname.lastname@gipsa-lab.fr}).}
  \and
  Pierre Comon\footnotemark[3]
}
\newtheorem{remark}[theorem]{Remark}
\newtheorem{example}{Example}
\newcommand{\tens}[1]{\boldsymbol{\mathcal{#1}}}
\newcommand{\tenselem}[1]{\mathcal{#1}}
\newcommand{\matr}[1]{\boldsymbol{#1}}
\newcommand{\set}[1]{\mathscr{#1}}
\newcommand{\T}{{\sf T}}        
\newcommand{\rank}[1]{\mathop{\operator@font rank}\{#1\}}
\newcommand{\colrank}[1]{\mathop{\operator@font colrank}\{#1\}}
\newcommand{\krank}[1]{\mathop{\operator@font krank}\{#1\}}
\newcommand{\trace}[1]{\mathop{\operator@font trace}\{#1\}}
\newcommand{\Diag}[1]{\mathop{\operator@font Diag}\{#1\}}    
\newcommand{\diag}[1]{\mathop{\operator@font diag}\{#1\}}    
\newcommand{\Span}[1]{\mathop{\operator@font Span}\{#1\}}    
\newcommand{\argmin}{\mathop{\operator@font argmin}}
\newcommand{\offdiag}[1]{\mathop{\operator@font offdiag}\{#1\}}    
\newcommand{\Proj}[2]{\mathop{\operator@font Proj_{#1}}{#2}}
\newcommand{\ProjGrad}[2]{\mathop{{\operator@font Proj} \nabla}#1(#2)}
\newcommand{\eqdef}{\stackrel{\sf def}{=}}
\newcommand{\RR}{\mathbb{R}}
\newcommand{\NN}{\mathbb{N}}
\newcommand{\ON}[1]{\set{O}_{#1}}
\newcommand{\SON}[1]{\set{SO}_{#1}}
\newcommand{\Gmat}[3]{\matr{G}^{(#1,#2,#3)}}
\newcommand{\contr}[1]{\mathop{\bullet_{#1}}}   
\newcommand{\jl}{\color{black}}
\definecolor{darkgreen}{RGB}{5,180,5}
\newcommand{\ku}{\color{black}}
\newcommand{\fin}{\color{black}}
\begin{document}

\maketitle

\begin{abstract}
In this paper, we consider a family of Jacobi-type algorithms for simultaneous orthogonal diagonalization problem of symmetric tensors.
For the Jacobi-based algorithm of [SIAM J. Matrix Anal. Appl., 2(34):651--672, 2013],
we prove its global convergence for simultaneous orthogonal diagonalization of symmetric matrices and 3rd-order tensors.
We also propose a new Jacobi-based algorithm in the general setting
and prove its global convergence for \jl sufficiently smooth functions.\fin
\end{abstract}

\begin{keywords}
orthogonal tensor diagonalization, Jacobi rotation, global convergence, \jl \L{}ojasiewicz gradient \fin inequality, proximal algorithm
\end{keywords}

\begin{AMS}
15A69, 49M30, 65F99, 90C30
\end{AMS}


\section{Introduction}\label{sec:introduction}
Higher-order tensor \jl decompositions \fin have attracted a lot of attention in the last two decades because of the applications in various disciplines,
including signal processing, numerical linear algebra and data analysis
\cite{Cichocki15:review,Comon14:introduction,Kolda09:review}.
The most popular decompositions include the Canonical Polyadic and Tucker decompositions,
\jl where additional constraints are often imposed, such as 
symmetry \cite{Comon08:symmetric}, nonnegativity \cite{Lim09:nonnegtive}
or orthogonality \cite{Kolda01:Orthogonal}.

An important class of tensor approximation \jl problems is \fin approximation  with orthogonality constraints.
In particular,
the orthogonal symmetric tensor diagonalization problem for 3rd and 4th-order cumulant tensors is in the core of Independent Component Analysis \cite{Como92:elsevier,Como94:sp,Como94:ifac}, \jl and is a popular way to solve blind source separation problems \fin in signal processing \cite{Como10:book}.
\jl In the same context,  simultaneous orthogonal matrix diagonalization \cite{Cardoso93:JADE} is widely used; simultaneous orthogonal tensor diagonalization for  slices of  4th-order cumulants is also used to solve  source separation problems \cite{Lathauwer01:ICA}. \fin

\paragraph{\jl Main notations\fin} Let $\RR^{n_1 \times \cdots \times n_d}\eqdef\RR^{n_1}\otimes\cdots\otimes\RR^{n_d}$
denote the space  of {$d$th-order tensors}. \ku
In  the paper, the tensors are typeset with a bold calligraphic font (e.g, $\tens{T}$, $\tens{W}$), and matrices are in bold (e.g, $\matr{Q}$, $\matr{U}$); \ku the elements of tensors or matrices are typeset as $\tenselem{T}_{ijk}$ (or sometimes $\tenselem{T}_{i,j,k}$) or $Q_{ij}$ respectively.  \fin

\jl For a tensor $\tens{T} \in \RR^{n_1 \times \cdots \times n_d}$  and a matrix $\matr{M} \in \mathbb{R}^{m \times n_k}$, their {$k$-mode product} \cite[subsection 2.5]{Kolda09:review} is  the tensor $(\tens{T} {\contr{k}} \matr{M}) \in \mathbb{R}^{n_1 \times \cdots \times n_{k-1} \times {m} \times n_{k+1} \times \cdots \times n_d}$ defined as
\[
(\tens{T} \contr{k} \matr{M}) _{i_1,...,i_d} \eqdef \sum_{j=1}^{n_k} \tenselem{T}_{i_1,\ldots,i_{k-1},j,i_{k+1}\ldots,i_d}  {M}_{i_1,j} 
\]
A tensor $\tens{T} \in \RR^{n\times \cdots \times n}$ is called symmetric if its entries do not change under any permutation of indices; its diagonal is, by definition, the vector
\[
\diag{\tens{T}} \eqdef \begin{bmatrix}\tenselem{T}_{1\ldots1} & \cdots & \tenselem{T}_{n\ldots n} \end{bmatrix}^{\T}.
\]
\fin
We will always denote by $\|\cdot\|$ the Frobenius norm of a tensor or a matrix, or the Euclidean norm of a vector.
\jl Finally, \fin let $\ON{n} \subset \RR^{n\times n}$ denote the orthogonal group, \jl that is, the set of orthogonal matrices. \fin
Let $\SON{n}\subset \ON{n}$ denote the special orthogonal group,
\jl  the set of orthogonal matrices with determinant $1$.\fin

\paragraph{\jl Problem statement\fin} In this paper, we consider the following simultaneous orthogonal symmetric tensor diagonalization problem.
Let $\{\tens{A}^{(\ell)}:1\leq\ell\leq m\}\subset\RR^{n \times \cdots \times n}$ be a set of symmetric tensors.
We wish to maximize
\begin{equation}\label{eq:sym_tensor_diagonalization}
\matr{Q}_{*}=\arg\max_{\matr{Q} \in \SON{n}} \sum_{\ell=1}^{m} \|\diag{\tens{W}^{(\ell)}}\|^2,
\end{equation}
where $\tens{W}^{(\ell)}=\tens{A}^{(\ell)} \contr{1} \matr{Q}^{\T} \cdots \contr{d} \matr{Q}^{\T}$
for $1\leq\ell\leq m$.
Problem \cref{eq:sym_tensor_diagonalization}
has the following well-known problems as special cases:
\begin{itemize}
\item orthogonal tensor diagonalization problem if $m=1$ and $d>2$;
\item simultaneous orthogonal matrix diagonalization problem if $m>1$ and $d=2$.
\end{itemize}

Several algorithms have been proposed in the literature to solve special cases of problem \cref{eq:sym_tensor_diagonalization}.
The first were the Jacobi CoM (Contrast Maximization)
algorithm for orthogonal diagonalization of 3rd and 4th-order symmetric tensors
\cite{Como92:elsevier,Como94:sp,Como94:ifac} and the JADE (Joint Approximate Diagonalization of Eigenmatrices) algorithm for simultaneous orthogonal matrix diagonalization \cite{Cardoso93:JADE}.
An algorithm for simultaneous orthogonal 3rd-order tensor diagonalization was proposed in
\cite{Lathauwer96:simultaneous}.
These Jacobi-type algorithms have been very widely used in applications \cite{Como10:book},
and have the advantage that Jacobi rotations can be computed by rooting low-order polynomials.
Nevertheless, up to our knowledge,
the convergence of these methods was not proved,
although it was often observed in practice \cite{Como94:ifac,LDL09:handbook}.

\paragraph{\jl Contribution\fin} In this paper, we consider several Jacobi-type algorithms to solve problem
\cref{eq:sym_tensor_diagonalization}, and \jl study their global convergence properties. \fin
By {\it global convergence},
we mean that, for any starting point,
the \jl whole \fin sequence of iterations produced by the algorithm always converges \jl to a limit point\footnote{This was also called \emph{single-point convergence} in \cite{Usch15:pjo}.}. \fin
\jl Note that the global convergence does not guarantee convergence to a global maximum, since the cost function is multimodal.\fin

First, we consider the Jacobi-based algorithm 
\jl proposed in \cite{IshtAV13:simax} \fin for best low multilinear rank approximation of 3rd-order symmetric tensors.
\jl
The algorithm uses a gradient-based order of Jacobi rotations, hence we call \fin
this algorithm  {\it Jacobi-G} in our paper.
For the Jacobi-G algorithm the convergence of subsequences of iterations to stationary points was established in \cite{IshtAV13:simax}.
We prove that, for problem \cref{eq:sym_tensor_diagonalization},
{Jacobi-G algorithm converges globally and the limit point is always a stationary point in the cases $d=2,3$.}
The proof is based on a variant \L{}ojasiewicz theorem developed in \cite{SU15:pro} for analytic submanifolds of Euclidean space.

Second, we propose a  \jl new \fin Jacobi-based algorithm inspired by proximal methods \cite{Pari14:proximal},
which is called the {\it Jacobi-PC} algorithm in this paper.
We show that Jacobi-PC algorithm always converges globally to a stationary point in the general setting.
In particular,
for cases $d=3,4$ of problem \cref{eq:sym_tensor_diagonalization},
Jacobi-PC algorithm allows for a simple algebraic solution to find the optimal Jacobi rotation,
as in the Jacobi CoM algorithm \cite{Como92:elsevier,Como94:sp,Como94:ifac}.
In addition, this algorithm does not need the order of Jacobi rotations introduced in Jacobi-G algorithm.
Finally,
the global convergence of Jacobi-PC algorithm is proved for \jl sufficiently smooth  functions. \fin
Therefore,
these results may be applied to other tensor approximation problems
(\emph{e.g.,} \ku nonsymmetric orthogonal diagonalization \cite{MoraV08:simax},\fin or  Tucker approximation of
symmetric \cite{IshtAV13:simax} or antisymmetric \cite{Bego16:antisymmetric} tensors).

\paragraph{\jl Organization\fin} The paper is organized as follows.
In \crefrange{sec:prob_statement}{sec:tensor_properties},
we present some more or less known results or easy extensions.
In \crefrange{sec:jacobi_G_global}{sec:experiments},
we show our main results of this paper.
In \cref{sec:prob_statement},
we introduce the abstract optimization problem on $\SON{n}$
and the general Jacobi algorithm to solve this abstract problem.
In \cref{sec:jacobi_G},
we recall the Jacobi-G algorithm proposed in \cite{IshtAV13:simax}
and its local convergence properties in the general setting.
In \cref{sec:tensor_properties},
we list some properties that are specific to orthogonal tensor diagonalization problem.
In \cref{sec:jacobi_G_global},
we prove the global convergence of Jacobi-G algorithms
for simultaneous orthogonal diagonalization of matrices and 3rd-rder tensors.
In \cref{sec:new_algo},
we propose the Jacobi-PC algorithm and prove its global convergence.
We derive formulas for optimal Jacobi rotations in the case of 3rd and 4th-order tensors.
In \cref{sec:experiments}, we provide some numerical experiments.


\section{Problem statement and Jacobi rotations}\label{sec:prob_statement}
\subsection{Abstract optimization problem}
Let $\textit{f}: \SON{n} \to \RR$ be a function.
The abstract optimization problem is to find $\matr{Q}^* \in \SON{n}$ such that
\begin{equation}\label{eq:min_SO_n}
\matr{Q}^* = \arg \max_{\matr{Q} \in \SON{n}} \textit{f}(\matr{Q}).
\end{equation}

\begin{example}\label{ex:ATD}
Problem \cref{eq:sym_tensor_diagonalization} in \cref{sec:introduction} is a special case of problem \cref{eq:min_SO_n}.
Because
$\|\tens{A}^{(\ell)}\|=\|\tens{W}^{(\ell)}\|$ for any $1\leq\ell\leq m$,
problem \cref{eq:sym_tensor_diagonalization} is equivalent to find
\begin{equation*}\label{eq:sym_tensor_diagonalization-2}
\matr{Q}_{*}=\arg\min_{\matr{Q} \in \SON{n}} \sum_{\ell=1}^{m} \|\offdiag{\tens{W}^{(\ell)}}\|^2,
\end{equation*}
where $\offdiag{\tens{W}^{(\ell)}}$ is the vector of elements in $\tens{W}^{(\ell)}$ except diagonal elements.
\end{example}

\begin{example}\label{ex:Ishteva}
In \cite{IshtAV13:simax},
the best low multilinear rank approximation problem for 3rd order symmetric tensors
was formulated as a special case of problem \cref{eq:min_SO_n}.
In fact,
based on \cite[Theorem 4.1]{Lathauwer00:rank-1approximation},
the cost function has the form
\begin{equation}\label{eq:cost-function-IshtAV}
\textit{f}(\matr{Q}) = \|\tens{A} \contr{1} \matr{M}\matr{Q}^{\T}\contr{2} \matr{M}\matr{Q}^{\T}\contr{3} \matr{M}\matr{Q}^{\T}\|^2
\end{equation}
where $r<n$ and
$\matr{M}=\begin{bmatrix}
I_{r} &       0\\
  0 & 0
\end{bmatrix}
\in \RR^{n\times n}$.
\end{example}

\begin{example}\label{ex:Pierre}
Under some assumptions,
it was shown that the orthogonal tensor {diagonalization} problem
could be solved in the sense of maximization of the trace \jl of a tensor \fin\cite{Comon07:tensor}, \jl and is also a \fin special case of problem \cref{eq:min_SO_n}.
\end{example}

\subsection{Givens rotations and the general Jacobi algorithm}
Let $\theta\in\RR$ be an angle and $(i,j)$ be a pair of indices with $1 \le i < j \le n$.
We denote the Givens rotation matrix by
\[
\Gmat{i}{j}{\theta} =
\begin{bmatrix}
1 &       & & & &&\\
  &\ddots & & & & \matr{0} &\\
  & & \cos \theta & & -\sin\theta && \\
  & & & \ddots & &&\\
  & & \sin \theta & & \cos\theta && \\
  & \matr{0} & & & & \ddots &\\
 &       & & & &&1
\end{bmatrix},
\]
i.e., the matrix defined by
\[
(\Gmat{i}{j}{\theta})_{k,l} =
\begin{cases}
1, &  k = l, k\not\in \{i,j\}, \\
\cos{\theta}, &  k = l, k\in \{i,j\}, \\
\sin{\theta}, &  (k,l) = (j,i),\\
-\sin{\theta}, & (k,l) = (i,j),\\
0, &  \text{otherwise} \\
\end{cases}
\]
for $1 \le k,l \le n$.
We summarize the general Jacobi algorithm in \cref{alg:jacobi}.
\begin{algorithm}\caption{General Jacobi algorithm}\label{alg:jacobi}
{\bf Input:} A  function $f: \SON{n} \to \RR$, a starting point $\matr{Q}_{0}$.\newline
{\bf Output:} Sequence of iterations $\{\matr{Q}_{k}\}_{k\ge1}$.
\begin{itemize}
\item {\bf For} $k=1,2,\ldots$ [until a stopping criterion is satisfied] do
\item\quad Choose the pair $(i_k,j_k)$ according to a certain pair selection rule.
\item\quad Compute the angle $\theta^{*}_{k}$ that maximizes  the function $\ku h_k\fin(\theta)$ defined as
\begin{equation}\label{eq:h_k}
\ku h_k\fin(\theta)\eqdef{f}(\matr{Q}_{k-1} \Gmat{i_k}{j_k}{\theta}).
\end{equation}
\item \quad Set $\matr{U}_k \eqdef \Gmat{i_k}{j_k}{\theta^{*}_k}$, and update $\matr{Q}_k = \matr{Q}_{k-1} \matr{U}_k$.
\item {\bf End for}
\end{itemize}
\end{algorithm}

\jl
Note that in \cref{alg:jacobi} (and all the following algorithms, unless mentioned explicitly) we do not specify a stopping criterion, since our goal is to analyse the whole sequence of iterations $\{\matr{Q}_{k}\}_{k\ge1}$ produced by the algorithm. \ku
The stopping criterion applies only to  software implementation of algorithms.
\fin

\cref{alg:jacobi} \jl performs \fin the sequence of iterations $\matr{Q}_k$ by multiplicative updates:
\begin{equation}\label{eq:cumulative_product}
\matr{Q}_k = \matr{U}_1 \cdots \matr{U}_{k-1}\matr{U}_k,
\end{equation}
where each $\matr{U}_k$ is an elementary rotation.
The advantage of this Jacobi-type algorithm is that each update is a one-dimensional optimization problem, which can be solved efficiently.

\begin{remark}
Note that the choice of pair $(i_k,j_k)$ in every iteration is not specified in \cref{alg:jacobi}.
One of the most natural rules is in cyclic fashion as follows.
\begin{equation}\label{equation-Jacobi-C}
\begin{split}
&(1,2) \to (1,3) \to \cdots \to (1,n) \to \\
& (2,3) \to \cdots \to (2,n) \to \\
& \cdots  \to (n-1,n) \to \\
&(1,2) \to (1,3) \to \cdots.
\end{split}
\end{equation}
We call the Jacobi algorithm with this cyclic-by-row rule the \emph{Jacobi-C} algorithm.
\end{remark}

In Jacobi-C algorithm, the choice of pairs is periodic with the period $n(n-1)/2$.
Each set of $n(n-1)/2$ iterations is called a \emph{sweep}.
The pair selection rule \cref{equation-Jacobi-C} was used in the
Jacobi CoM algorithm \cite{Como92:elsevier,Como94:sp,Como94:ifac}
and JADE algorithm \cite{Cardoso93:JADE}.

\begin{remark}\label{rem:maximizer_choice}
If several equivalent maximizers are present in \cref{eq:h_k},
also in all the following algorithms, we choose the one with the angle of smaller magnitude.
\end{remark}

\section{Jacobi-G algorithm}\label{sec:jacobi_G}
In this section,
we recall the Jacobi-based algorithm in \cite{IshtAV13:simax} and its properties.
For simplicity,
this algorithm is called the {\it Jacobi-G} (gradient-based Jacobi-type) algorithm in our paper.

\subsection{Technical definitions}
Define the matrix
\[
\delta_{i,j} = \frac{d}{d \theta}\Gmat{i}{j}{\theta} \Big|_{\theta = 0} =
\begin{bmatrix}
0 &       & & & &&\\
  &\ddots & & & & \matr{0} &\\
  & & 0 & & -1 && \\
  & & & \ddots & &&\\
  & & 1 & & 0 && \\
  & \matr{0} & & & & \ddots &\\
 &       & & & &&0
\end{bmatrix}
\]
and introduce the notation
\[
d_{i,j}(\matr{Q}) \eqdef \matr{Q} \delta_{i,j}
\]
for $\matr{Q}\in\ON{n}$.
Next, for a differentiable function $f: \ON{n} \to \RR$,
we define the projected gradient \cite[Lemmma 5.1]{IshtAV13:simax}
as
\begin{equation}\label{eq:ProjGrad}
\ProjGrad{f}{\matr{Q}} \eqdef \matr{Q} \matr{\Lambda}(\matr{Q}),
\end{equation}
where
\begin{equation}\label{eq:Lambda}
\matr{\Lambda}(\matr{Q})  \eqdef \frac{\matr{Q}^{\T}\nabla\textit{f}(\matr{Q}) - (\nabla\textit{f}(\matr{Q}))^{\T}\matr{Q}}{2}
\end{equation}
and $\nabla\textit{f}(\matr{Q})$ is the Euclidean gradient of $f$ as a function of the matrix argument.
The projected gradient $\ProjGrad{f}{\matr{Q}}$ is exactly the Riemannian gradient
if $\ON{n}$ is viewed as an embedded submanifold of $\RR^{n \times n}$ \cite{Absil08:Optimization}.

\subsection{Jacobi-G algorithm}
In \cite{IshtAV13:simax}, \ku a modification of \cref{alg:jacobi}
was proposed, that choose  a pair $(i,j)$ at each iteration 
 that satisfies
\begin{equation}\label{eq:pair_selection_gradient}
|\langle \ProjGrad{f}{\matr{Q}}, d_{i,j}(\matr{Q})\rangle|  \ge \varepsilon \|\ProjGrad{f}{\matr{Q}}\|,
\end{equation}
where  $\varepsilon$ is a small positive constant.
\fin

\begin{algorithm}\caption{Jacobi-G algorithm}
\label{alg:jacobi-G}
{\bf Input:} A  function $f: \SON{n} \to \RR$,    a small positive $0<\varepsilon\leq\frac{2}{n}$, a starting point $\matr{Q}_{0}$.\newline
{\bf Output:} Sequence of iterations $\{\matr{Q}_{k}\}_{k\ge1}$.
\begin{itemize}
\item {\bf For} $k=1,2,\ldots$ [until a stopping criterion is satisfied] do
\item\quad Choose the pair $\ku (i,j) = \fin(i_k,j_k)$ satisfying inequality \eqref{eq:pair_selection_gradient} at $\ku\matr{Q} = \fin \matr{Q}_{k-1}$.
\item\quad Compute the angle $\theta^{*}_{k}$ that maximizes the function $\ku h_k\fin(\theta)$ defined in \eqref{eq:h_k}.
\item\quad  Set $\matr{U}_k \eqdef \Gmat{i_k}{j_k}{\theta^{*}_k}$, and update $\matr{Q}_k = \matr{Q}_{k-1} \matr{U}_k$.
\item {\bf End for}
\end{itemize}
\end{algorithm}

\ku
Note that, as shown in {\cite[Proof of Lemma 5.3]{IshtAV13:simax}} the left hand side of \cref{eq:pair_selection_gradient} can be also written in terms of the function $h_k$:
\begin{equation}\label{eq:gradient_projection_via_h_k}
|h'_k(0)| = |\langle \ProjGrad{f}{\matr{Q}_{k-1}}, d_{i_k,j_k}(\matr{Q}_{k-1})\rangle|.
\end{equation}
\fin

The following lemma (an easy generalisation of \cite[Lemma 5.2]{IshtAV13:simax}) shows that
it is always possible to choose such a pair  \ku $(i_k,j_k)$. \fin

\begin{lemma}\label{lemma-G-order-existence}
For any differentiable function $f: \SON{n} \to \RR$,
$\matr{Q}\in\SON{n}$ and $0 < \varepsilon \le 2/n$,
it is always possible to find $(i,j)$, with $i< j$, such that \eqref{eq:pair_selection_gradient} holds.
\end{lemma}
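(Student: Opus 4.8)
The plan is to reduce the claim to an elementary averaging argument over the $n(n-1)/2$ admissible pairs. First I would use that left multiplication by $\matr{Q}\in\SON{n}$ is an isometry of $\RR^{n\times n}$ equipped with the Frobenius inner product. Since $\ProjGrad{f}{\matr{Q}} = \matr{Q}\matr{\Lambda}(\matr{Q})$ by \eqref{eq:ProjGrad} and $d_{i,j}(\matr{Q}) = \matr{Q}\delta_{i,j}$ by definition, this gives
\[
\langle \ProjGrad{f}{\matr{Q}}, d_{i,j}(\matr{Q})\rangle = \langle \matr{\Lambda}(\matr{Q}), \delta_{i,j}\rangle, \qquad \|\ProjGrad{f}{\matr{Q}}\| = \|\matr{\Lambda}(\matr{Q})\|,
\]
so that \eqref{eq:pair_selection_gradient} becomes a purely linear-algebraic statement about the skew-symmetric matrix $\matr{\Lambda}(\matr{Q})$ (skew-symmetry being immediate from \eqref{eq:Lambda}) and its coordinates with respect to the family $\{\delta_{i,j}: 1\le i<j\le n\}$.

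Next I would record that $\{\delta_{i,j}: 1\le i<j\le n\}$ is an orthogonal basis of the space of $n\times n$ skew-symmetric matrices, with $\|\delta_{i,j}\|^2 = 2$. Writing $\Lambda_{kl}$ for the entries of $\matr{\Lambda}(\matr{Q})$, a direct index computation then yields $\langle \matr{\Lambda}(\matr{Q}), \delta_{i,j}\rangle = 2\,\Lambda_{ji}$ and, using skew-symmetry to pass from the sum over all entries to the sum over the strictly upper-triangular ones, $\|\matr{\Lambda}(\matr{Q})\|^2 = 2\sum_{i<j}\Lambda_{ji}^2$. Summing $|\langle\matr{\Lambda}(\matr{Q}),\delta_{i,j}\rangle|^2$ over the $n(n-1)/2$ pairs therefore gives $2\|\matr{\Lambda}(\matr{Q})\|^2$, so by the pigeonhole principle at least one pair $(i,j)$ with $i<j$ satisfies
\[
|\langle \matr{\Lambda}(\matr{Q}), \delta_{i,j}\rangle|^2 \ \ge\ \frac{2\,\|\matr{\Lambda}(\matr{Q})\|^2}{n(n-1)/2} \ =\ \frac{4}{n(n-1)}\,\|\matr{\Lambda}(\matr{Q})\|^2 .
\]

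Finally, taking square roots and combining with the first step, this pair satisfies $|\langle \ProjGrad{f}{\matr{Q}}, d_{i,j}(\matr{Q})\rangle| \ge \tfrac{2}{\sqrt{n(n-1)}}\,\|\ProjGrad{f}{\matr{Q}}\|$; since $\sqrt{n(n-1)}< n$ we have $\tfrac{2}{\sqrt{n(n-1)}} > \tfrac{2}{n} \ge \varepsilon$, which is exactly \eqref{eq:pair_selection_gradient}. I do not expect a genuine obstacle here: the argument is essentially mechanical, and the only point deserving care is the index/sign bookkeeping for $\delta_{i,j}$ together with the use of skew-symmetry of $\matr{\Lambda}(\matr{Q})$ to relate the full Frobenius norm to the $n(n-1)/2$ coefficients indexed by $i<j$. (The hypothesis $\varepsilon\le 2/n$ is in fact slightly stronger than what the proof needs---$\varepsilon\le 2/\sqrt{n(n-1)}$ would suffice---but the stated bound keeps the constant simple and matches \cite{IshtAV13:simax}.)
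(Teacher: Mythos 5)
Your proof is correct and, at its core, follows the same reduction as the paper: both pass from $\ProjGrad{f}{\matr{Q}}$ to the skew-symmetric matrix $\matr{\Lambda}(\matr{Q})$ using left-invariance of the Frobenius inner product, and both then lower-bound the largest off-diagonal entry of $\matr{\Lambda}(\matr{Q})$ against $\|\matr{\Lambda}(\matr{Q})\|$. The difference is in how that lower bound is obtained. The paper selects $(i,j)=\arg\max_{k,l}|\Lambda_{k,l}(\matr{Q})|$ and invokes the generic $n\times n$ matrix-norm inequality $\max_{k,l}|\Lambda_{k,l}|\ge\tfrac{1}{n}\|\matr{\Lambda}\|_F$, which treats all $n^2$ entries and ignores that $\matr{\Lambda}(\matr{Q})$ has zero diagonal and only $n(n-1)/2$ independent coefficients. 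You instead use the orthogonal decomposition of the skew-symmetric space along the basis $\{\delta_{i,j}\}$ (a Parseval identity), which counts only those $n(n-1)/2$ coefficients and hence yields the sharper constant $\tfrac{2}{\sqrt{n(n-1)}}$ in place of $\tfrac{2}{n}$. As you note, this shows the assumption $\varepsilon\le 2/n$ is not tight; the paper keeps $2/n$ because it is the constant used in \cite{IshtAV13:simax} and nothing downstream depends on the improvement. One small bookkeeping point: $\langle\matr{\Lambda}(\matr{Q}),\delta_{i,j}\rangle=\Lambda_{ji}-\Lambda_{ij}=2\Lambda_{ji}=-2\Lambda_{ij}$, so be careful with the sign convention, though it is immaterial once you take absolute values.
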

\begin{proof}
First of all, thanks to the representation \eqref{eq:ProjGrad}, rotation invariance of the Euclidean norm and  $\matr{\Lambda}(\matr{Q})$ being skew-symmetric, we have that the condition \eqref{eq:pair_selection_gradient} is equivalent to
\begin{equation}\label{eq:pair_selection_gradient_Lambda}
2 |\Lambda_{i,j}(\matr{Q})|  \ge \varepsilon \|\matr{\Lambda}(\matr{Q})\|.
\end{equation}
Now we choose the pair $(i,j)$ that maximizes the left-hand side of \eqref{eq:pair_selection_gradient_Lambda}
\[
(i,j) = \arg \max_{1\leq k, l \leq n} |\ku {\Lambda}_{k,l} \fin(\matr{Q})|,
\]
Since $\matr{\Lambda}(\matr{Q})$ is skew-symmetric, we can choose such a pair with $i < j$.
Finally, by the well known inequality between matrix norms,
\[
2 |\ku\Lambda_{i,j}\fin(\matr{Q})| =2\max_{1\leq k, l \leq n}\left|\ku {\Lambda}_{k,l} \fin(\matr{Q})\right|
\ge \frac{2}{n}\|\matr{\Lambda}(\matr{Q})\| \ge \varepsilon \|\matr{\Lambda}(\matr{Q})\|.
\]
\end{proof}
\begin{remark}
\ku
\cref{lemma-G-order-existence} was proved in  \cite[Lemma 5.2]{IshtAV13:simax} only for the cost function \cref{eq:cost-function-IshtAV},
\ku although it is valid for any differentiable function.
Hence, \fin the convergence results of \cite{IshtAV13:simax} are valid for a wide class of functions (see the following theorem).
\end{remark}

\begin{theorem}[{\cite[Theorem 5.4]{IshtAV13:simax}} combined with \cref{lemma-G-order-existence}]\label{theorem-ishteva-stationary-point}
Let $f$ be a $C^{\infty}$ function. Then every accumulation point\footnote{i.e., the limit of every convergent subsequence.}  $\matr{Q}_*$ of the sequence $\{\matr{Q}_{k}\}_{k\ge1}$ produced by \cref{alg:jacobi-G} is a stationary point of the function $f$ (\emph{i.e}, $\ProjGrad{f}{\matr{Q}_*} = 0$).
\end{theorem}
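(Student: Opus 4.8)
The plan is to follow the template of Jacobi-type convergence analyses: show that (i) each iteration produces a monotone non-decreasing sequence of cost values, (ii) the step sizes (angles) and the successive differences $\|\matr{Q}_k - \matr{Q}_{k-1}\|$ tend to zero, and (iii) at any accumulation point the first-order optimality condition holds because otherwise the pair-selection rule \eqref{eq:pair_selection_gradient} would force a uniformly bounded-below increase of $f$, contradicting (i)–(ii). Concretely, I would first observe that since $\theta^*_k$ is chosen as a maximizer of $h_k(\theta) = f(\matr{Q}_{k-1}\Gmat{i_k}{j_k}{\theta})$, we have $f(\matr{Q}_k) = h_k(\theta^*_k) \ge h_k(0) = f(\matr{Q}_{k-1})$, so $\{f(\matr{Q}_k)\}$ is non-decreasing; as $f$ is continuous on the compact set $\SON{n}$ it is bounded, hence $f(\matr{Q}_k)$ converges and $f(\matr{Q}_k) - f(\matr{Q}_{k-1}) \to 0$.

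Next I would quantify how much the cost increases at each step in terms of $|h'_k(0)|$. Using a Taylor-type bound valid for $C^\infty$ (indeed $C^2$) functions on the compact manifold $\SON{n}$, there is a constant $L$ (uniform over $k$) such that along the one-parameter geodesic $\theta \mapsto \matr{Q}_{k-1}\Gmat{i_k}{j_k}{\theta}$ one has $h_k(\theta) \ge h_k(0) + h'_k(0)\theta - \tfrac{L}{2}\theta^2$. Maximizing the right-hand side over $\theta$ (or evaluating at $\theta = h'_k(0)/L$) gives
\begin{equation*}
f(\matr{Q}_k) - f(\matr{Q}_{k-1}) = h_k(\theta^*_k) - h_k(0) \ge \frac{|h'_k(0)|^2}{2L}.
\end{equation*}
Combined with $f(\matr{Q}_k)-f(\matr{Q}_{k-1}) \to 0$, this yields $h'_k(0) \to 0$. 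By the identity \eqref{eq:gradient_projection_via_h_k} and the pair-selection inequality \eqref{eq:pair_selection_gradient}, $|h'_k(0)| = |\langle \ProjGrad{f}{\matr{Q}_{k-1}}, d_{i_k,j_k}(\matr{Q}_{k-1})\rangle| \ge \varepsilon \|\ProjGrad{f}{\matr{Q}_{k-1}}\|$, so $\|\ProjGrad{f}{\matr{Q}_{k-1}}\| \to 0$. Finally, if $\matr{Q}_*$ is the limit of a subsequence $\matr{Q}_{k_p - 1}$, continuity of the projected gradient (which is a smooth function of $\matr{Q}$) gives $\ProjGrad{f}{\matr{Q}_*} = \lim_p \ProjGrad{f}{\matr{Q}_{k_p-1}} = 0$, which is precisely the stationarity claim.

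The main obstacle I anticipate is establishing the uniform quadratic lower bound $h_k(\theta) \ge h_k(0) + h'_k(0)\theta - \tfrac{L}{2}\theta^2$ with a single constant $L$ independent of the iteration $k$ and of the chosen pair $(i_k,j_k)$. This requires a uniform bound on the second derivative of $f$ composed with the rotation family: one should argue that the map $(\matr{Q}, i, j, \theta) \mapsto f(\matr{Q}\Gmat{i}{j}{\theta})$ has second derivative in $\theta$ bounded over the compact domain $\SON{n} \times \{(i,j) : i<j\} \times \RR$ — finiteness of the index set and periodicity in $\theta$ reduce this to compactness, and then $C^2$ smoothness of $f$ suffices. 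A secondary point of care is that $\theta^*_k$ is a global maximizer of $h_k$ rather than the point $h'_k(0)/L$, but this only helps: $h_k(\theta^*_k) \ge \max_\theta [h_k(0) + h'_k(0)\theta - \tfrac{L}{2}\theta^2]$, so the inequality goes the right way and no additional argument about the location of $\theta^*_k$ is needed. Everything else — monotonicity, telescoping, extracting the subsequential limit — is routine.
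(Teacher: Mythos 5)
Your proof is correct and takes essentially the standard route: since the paper delegates this result to \cite[Theorem 5.4]{IshtAV13:simax} (combined with \cref{lemma-G-order-existence}) without reproducing the proof, there is no in-paper argument to compare against, but your sufficient-decrease argument — a uniform quadratic lower bound on $h_k$ giving $f(\matr{Q}_k)-f(\matr{Q}_{k-1})\ge |h'_k(0)|^2/(2L)$, hence $h'_k(0)\to 0$ and $\|\ProjGrad{f}{\matr{Q}_{k-1}}\|\to 0$ via \eqref{eq:pair_selection_gradient} and \eqref{eq:gradient_projection_via_h_k} — is the expected one, and your handling of the uniformity of $L$ via compactness of $\SON{n}$, finiteness of the pair index set, and $2\pi$-periodicity in $\theta$ is exactly the point that needs care. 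It is worth remarking that your route differs slightly from the paper's own proof of the analogous Jacobi-PC result (\cref{th:convergence-jacobi-C}): there the proximal penalty gives $\theta^*_k\to 0$ directly, and then $|\tilde h'_k(0)|=|\tilde h'_k(\theta^*_k)-\tilde h'_k(0)|\le M_1|\theta^*_k|$ is used; your argument instead bounds the cost increase by $|h'_k(0)|^2$ and never needs $\theta^*_k\to 0$, which is the right adaptation since without a proximal term one cannot infer smallness of the optimal angle from smallness of the cost increase.
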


\subsection{Variants of the Jacobi-G algorithm}
Note that the description of \cref{alg:jacobi-G} does not say precisely how to select the pairs $(i_k,j_k)$
that satisfy \cref{eq:pair_selection_gradient}.
The first option is suggested by the proof in \cref{lemma-G-order-existence}: set $\varepsilon = \frac{2}{n}$
and select the maximal element in \cref{eq:Lambda}.
We summarize this option in \cref{alg:jacobi-G-max}.
\begin{algorithm}\caption{Jacobi-G-max algorithm}
\label{alg:jacobi-G-max}
{\bf Input:} A  function $f: \SON{n} \to \RR$, a starting point $\matr{Q}_{0}$.\newline
{\bf Output:} Sequence of iterations $\{\matr{Q}_{k}\}_{k\ge1}$.
\begin{itemize}

\item {\bf For} $k=1,2,\ldots$ until a stopping criterion is satisfied do
\item\quad Choose the pair $(i_k,j_k) = (i,j)$ that maximizes $|\ku {\Lambda}_{i,j} \fin(\matr{Q}_{k-1})|$ in \cref{eq:Lambda}.
\item\quad Compute the angle $\theta^{*}_{k}$ that maximizes  the function $\ku h_k\fin(\theta)$ defined in \eqref{eq:h_k}.
\item\quad  Set $\matr{U}_k \eqdef \Gmat{i_k}{j_k}{\theta^{*}_k}$, and update $\matr{Q}_k = \matr{Q}_{k-1} \matr{U}_k$.
\item {\bf End for}
\end{itemize}
\end{algorithm}

However, choosing the maximal element in \cref{eq:Lambda} requires a search over all the elements,
which may take additional time.
In $\cite{IshtAV13:simax}$,
it was suggested to take $\varepsilon \ll 1$.
\jl If we choose $(i_k,j_k)$ in the cyclic order and $\varepsilon$  is very small, then it is natural to expect that the inequality \cref{eq:pair_selection_gradient} will be often satisfied, and thus the behavior of the algorithm will be very close to the behavior of the Jacobi-C algorithm. \fin

\jl
To make this idea more rigorous, we construct a modification of Jacobi-C algorithm that skips the rotations if the magnitude of the directional derivative is below a given threshold. The modification is described in \cref{alg:jacobi-G-threshold}. \fin

\begin{algorithm}\caption{Jacobi-C-threshold  algorithm}
\label{alg:jacobi-G-threshold}
{\bf Input:} A  function $f: \SON{n} \to \RR$, a threshold $\delta>0$, a starting point $\matr{Q}_{0}$.
\newline
{\bf Output:} A point $\widehat{\matr{Q}}$, such that $\|\ProjGrad{f}{\widehat{\matr{Q}}} \|\le \delta$.
\begin{itemize}
\item Set  $k=1$.
\item {\bf Do } (cyclic sweep)
\item\quad {\bf For} $i=1,\ldots,n$, $j = i+1,\ldots, n$
\item\quad \quad {\bf If} $|\ku \Lambda_{i,j} \fin(\matr{Q}_{k-1})| > \frac{\delta}{n}$
\item\quad \quad\quad Set $(i_k,j_k) = (i,j)$.
\item\quad\quad\quad Compute the angle $\theta^{*}_{k}$ that maximizes   $h_{\ku k\fin}(\theta)$ defined in \eqref{eq:h_k}.
\item\quad\quad\quad  Set $\matr{U}_k \eqdef \Gmat{i_k}{j_k}{\theta^{*}_k}$, and update $\matr{Q}_k = \matr{Q}_{k-1} \matr{U}_k$.
\item\quad \quad \quad Set $k \leftarrow k+1$.
\item\quad \quad {\bf End If}
\item\quad {\bf End For}
\item {\bf While} there was progress in the last sweep.
\item Return $\widehat{\matr{Q}} = \matr{Q}_{k-1}$.
\end{itemize}
\end{algorithm}

\medskip

\jl
Note that the output of \cref{alg:jacobi-G-threshold} is different from other algorithms in terms of the output, because the rule of skipping the rotations also yields a well-defined stopping criterion.
Also, note that, compared to Jacobi-G, the inequality \cref{eq:pair_selection_gradient} is not needed. 
But, \cref{alg:jacobi-G-threshold} can be viewed as a special case of \cref{alg:jacobi-G}, as shown by the following remark.
\fin

\begin{remark}\label{prop:jacobi_threhold}
There exists $\varepsilon = \varepsilon(\delta)$ such that the iterates $\matr{Q}_k$ produced by \cref{alg:jacobi-G-threshold} are the first elements of the sequence produced by a Jacobi-G algorithm (\cref{alg:jacobi-G}). 
Indeed, consider $c= \max_{\matr{Q} \in \SON{n}} \|\ProjGrad{f}{{\matr{Q}}}\|$, which is finite if $f$ is smooth (since $\SON{n}$ is compact).
Hence  $\varepsilon = \frac{\delta}{nc}$ is the required value of $\varepsilon$.

\jl
Also note that, \fin \cref{alg:jacobi-G-threshold} always terminates if the Jacobi-G algorithm converges to a stationary point.
\end{remark}

\section{\ku Derivatives in the orthogonal  tensor diagonalization problems \fin}\label{sec:tensor_properties}
\subsection{Projected gradient: \ku the case of a single tensor ($m=1$) \fin}
\label{sec:proj_grad}
In this subsection,
we derive a concrete form of projected gradient \cref{eq:ProjGrad},
which will be used in \cref{sec:jacobi_G_global}.
Let $\tens{A}\in\RR^{n \times \cdots \times n}$ be a $d$th-order symmetric tensor and
$\matr{Q}\in\ON{n}$ be an orthogonal matrix.
Let
$\tens{W} =  \tens{A} \contr{1} \matr{Q}^{\T} \cdots \contr{d} \matr{Q}^{\T}$
and
\begin{equation}\label{eq:cost-function-diagonalization}
\textit{f}(\matr{Q}) = \sum\limits_{j=1}^n \tenselem{W}^2_{jj\ldots j}
=\sum\limits_{j=1}^n(\sum\limits_{p_1,p_2,\ldots,p_d}\tenselem{A}_{p_1,p_2,\ldots,p_d}Q_{p_1,j}Q_{p_2,j}\ldots Q_{p_d,j})^2.
\end{equation}
\ku First, we \fin calculate the Euclidean gradient of $\textit{f}$ at $\matr{Q}$.
Let us fix $i$ and $j$.
Then
\begin{align*}
\frac{\partial \textit{f}}{\partial Q_{i,j}}
&=2\tenselem{W}_{jj\ldots j}\Big[dQ_{i,j}^{d-1}\tenselem{A}_{i,i,\ldots,i}
+(d-1)\binom{d}{d-1}Q_{i,j}^{d-2}(\sum_{k_1\neq i}Q_{k_1,j}\tenselem{A}_{i,\ldots,i,k_1}) + \ldots\\
&\phantom{=2\tenselem{W}_{jj\ldots j}[[}+q\binom{d}{q}Q_{i,j}^{q-1}(\sum\limits_{k_1,\ldots,k_{d-q}\neq i}Q_{k_1,j}\cdots Q_{k_{d-q},j}
\tenselem{A}_{i,\ldots,i,k_1,\ldots,k_q})+\ldots\\
&\phantom{=2\tenselem{W}_{jj\ldots j}[[}+d\sum\limits_{k_1,\ldots,k_{d-1}\neq i}
Q_{k_1,j}\cdots Q_{k_{d-1},j}\tenselem{A}_{i,k_1,\ldots,k_{d-1}})\Big]\\
&= 2d\tenselem{W}_{jj\ldots j}\sum\limits_{k_1,\ldots,k_{d-1}=1}^{n}
Q_{k_1,j}\cdots Q_{k_{d-1},j}\tenselem{A}_{i,k_1,\ldots,k_{d-1}} =
2d\tenselem{W}_{jj\ldots j} \tenselem{V}_{ij\ldots j},
\end{align*}
where $\tens{V} = \tens{A} \contr{2} \matr{Q}^{\T} \cdots \contr{d} \matr{Q}^{\T}$.
Noting that $\tens{V} = \tens{W}\contr{1} \matr{Q}$, we get that
\begin{align*}
\nabla \textit{f}(\matr{Q})&=  2d
  \begin{bmatrix}
    \tenselem{V}_{11\ldots 1} & \tenselem{V}_{12\ldots 2} & \cdots & \tenselem{V}_{1n\ldots n}\\
    \tenselem{V}_{21\ldots 1} & \tenselem{V}_{22\ldots 2} & \cdots & \tenselem{V}_{2n\ldots n}\\
   \vdots&\cdots&\cdots&\vdots\\ \\
   \tenselem{V}_{n1\ldots 1} & \tenselem{V}_{n2\ldots 2} & \cdots & \tenselem{V}_{nn\ldots n}
  \end{bmatrix}
    \begin{bmatrix}
    \tenselem{W}_{1\ldots 1} & 0 & \cdots & 0\\
    0 & \ddots & \ddots & \vdots\\
    \vdots&\ddots&\ddots&0\\
    0 & \cdots &0 & \tenselem{W}_{n\cdots n}
  \end{bmatrix}\\
&=  2d
\matr{Q}
  \begin{bmatrix}
    \tenselem{W}_{11\ldots 1} & \tenselem{W}_{12\ldots 2} & \ldots & \tenselem{W}_{1n\ldots n}\\
    \tenselem{W}_{21\ldots 1} & \tenselem{W}_{22\ldots 2} & \ldots & \tenselem{W}_{2n\ldots n}\\
   \ldots&\ldots&\ldots&\ldots\\ \\
   \tenselem{W}_{n1\ldots 1} & \tenselem{W}_{n2\ldots 2} & \ldots & \tenselem{W}_{nn\ldots n}
  \end{bmatrix}
   \begin{bmatrix}
    \tenselem{W}_{1\ldots 1} & 0 & \cdots & 0\\
    0 & \ddots & \ddots & \vdots\\
    \vdots&\ddots&\ddots&0\\
    0 & \cdots &0 & \tenselem{W}_{n\cdots n}
  \end{bmatrix}.
\end{align*}
After projecting $\nabla\textit{f}(\matr{Q})$ onto the tangent space at
$\matr{Q}$ to the  manifold $\ON{n}$,
we get
\[
\ProjGrad{f}{\matr{Q}}=\matr{Q} \matr{\Lambda}(\matr{Q}),
\]
where $\matr{\Lambda}(\matr{Q})$ is the matrix with
\begin{equation}\label{eq:Projgrad-general}
\ku\Lambda_{k,l}\fin(\matr{Q})=
\begin{cases}
0, &  k = l;\\
d(\tenselem{W}_{kl\ldots l}\tenselem{W}_{l\ldots l}\text{-}\tenselem{W}_{k\ldots k}\tenselem{W}_{k\ldots kl}), &  k < l;\\
-\ku\Lambda_{k,l}\fin(\tens{W}), &  k > l.
\end{cases}
\end{equation}
for any $1\leq k,l\leq n$.
This is a special case of $\cref{eq:Lambda}$ for function \cref{eq:cost-function-diagonalization}.

\begin{remark}\label{projected-gradient-3}
Let $\tens{A}$ be a 3rd order symmetric tensor.
Then $\ProjGrad{f}{\matr{Q}}=$
\begin{align*}
3\matr{Q}\left[\begin{smallmatrix}
    0 & \tenselem{W}_{122}\tenselem{W}_{222} -\tenselem{W}_{111}\tenselem{W}_{112}   &\tenselem{W}_{133}\tenselem{W}_{333} - \tenselem{W}_{111}\tenselem{W}_{113}  \\
    \tenselem{W}_{111}\tenselem{W}_{112}  - \tenselem{W}_{122}\tenselem{W}_{222}  & 0 & \tenselem{W}_{233}\tenselem{W}_{333} - \tenselem{W}_{222}\tenselem{W}_{223}  \\
   \tenselem{W}_{111}\tenselem{W}_{113} - \tenselem{W}_{133}\tenselem{W}_{333}& \tenselem{W}_{222}\tenselem{W}_{223}-\tenselem{W}_{233}\tenselem{W}_{333}  & 0
   \end{smallmatrix}\right].
\end{align*}
\end{remark}

\subsection{The cost function at each iteration}\label{sec:cost_function_iteration}
Now consider a single iteration in \cref{alg:jacobi} \ku for the cost function \eqref{eq:cost-function-diagonalization} \fin . For simplicity\footnote{other cases follow by substitution of indices}, we consider only the case $(i_k,j_k) = (1,2)$.
We also denote $\matr{G}(\theta)\eqdef \Gmat{1}{2}{\theta}$ without loss of generality.
Then $\matr{Q}_k = \matr{Q}_{k-1} \matr{G}(\theta^{*}_{k})$.

In this subsection, we take $\matr{Q} =  \matr{Q}_{k-1}$,  \ku  so that
\[
\tens{W}=\tens{A} \contr{1}\matr{Q}^{\T}\contr{2}\matr{Q}^{\T} \cdots\contr{d}\matr{Q}^{\T},
\]
is the rotated tensor before the $k$th iteration.
We also use notation $\tens{T}$ for the candidate tensors after $k$-th iteration, i.e.
\[
\tens{T} = \tens{T}(\theta) =\tens{W}\contr{1}\matr{G}^{\T}(\theta)\contr{2}\matr{G}^{\T}(\theta)\cdots \contr{d}\matr{G}^{\T}(\theta),
\]
so that $h_k(\theta) = \|\diag{\tens{T}(\theta)} \|^2$.
Note that we omit $k$ in the notation for $\tens{W}$ and $\tens{T}$, but \fin
this will not lead to confusion within one iteration.

Although $\ku h_k \fin (\theta)$  is defined on the whole real line,
it is periodic.
Apart from the obvious period $2\pi$, it has a smaller period $\pi/2$.
Indeed,
\begin{align*}
\matr{G}\left(\theta+\pi/2\right)=
\begin{bmatrix}
-\sin(\theta) & -\cos(\theta) & 0 & \cdots & 0\\
\cos(\theta) & -\sin(\theta) & 0 & \cdots & 0\\
0 & 0 & 1 & \cdots & 0\\
\vdots & \vdots & \vdots & \ddots & \vdots\\
0 & 0 & 0 & \cdots & 1
\end{bmatrix}
=
 \matr{G}(\theta) \begin{bmatrix}
0 & -1 & 0 & \cdots & 0\\
1 & 0 & 0 & \cdots & 0\\
0 & 0 & 1 & \cdots & 0\\
\vdots & \vdots & \vdots & \ddots & \vdots\\
0 & 0 & 0 & \cdots & 1
\end{bmatrix}.
\end{align*}
Hence, the tensor $\tens{T}(\theta+\pi/2)$ differs from $\tens{T}(\theta)$ by permutations of the indices $1$ and $2$ and change of signs, which does not change the cost function.
Hence, due to \cref{rem:maximizer_choice}, we are, in fact, maximizing $\ku h_k\fin(\theta)$ on the interval  $[-\pi/4, \pi/4]$ .
\ku In fact, in all the algorithms we choose \fin $\ku \theta^{*}_k\fin\in [-\pi/4, \pi/4]$ with $\ku h_k\fin(\ku \theta^{*}_k\fin)=\max\limits_{\theta\in\RR} \ku h_k\fin(\theta)$.

It is often convenient to rewrite the function $\ku h_k\fin(\theta)$ in \cref{eq:h-tensor-problem} in a polynomial form.
Consider the change of variables $\theta = \arctan(x)$. Then minimization of $\ku h_k\fin(\theta)$
on $[-\pi/4, \pi/4]$ is equivalent to minimization of
\[
\ku \tau_k\fin(x) \eqdef \ku h_k\fin(\arctan(x))
\]
on $[-1,1]$. After the change of variables, since $x = \tan \theta$, we have
\begin{equation}\label{eq:subst_tan_G}
 \matr{G}(\theta) =
 \begin{bmatrix} \matr{P}(\theta) & \\
 & \matr{I}_{n-2} \end{bmatrix},
 \quad \mbox{where }\matr{P}(\theta) =  \frac{1}{\sqrt{x^2+1}} \begin{bmatrix}
1 & -x \\
x & 1
\end{bmatrix}.
\end{equation}
Hence, the cost function is a rational function in $x$
\begin{equation}\label{eq:tau_rational}
\tau_k(x) = \frac{\rho(x)}{(1+x^2)^d}, 
\end{equation}
where $\rho(x)$ is a polynomial of degree $2d$ \ku (note that we dropped the index $k$ for simplicity). 

In {\cite[\ku eqn. \fin (22)-(23)]{Como94:ifac}} it was shown that finding critical points of  $\tau_k(x)$ can be reduced to finding roots of a quadratic polynomial if $d=3$ or a quartic polynomial if $d=4$.
We do not provide these expressions in this section, but their generalizations can be found in \cref{sec:algebraic_proximal}.
\fin

\subsection{Derivatives for matrices and third-order tensors}\label{sec:deriv_2_3}
\ku
We recall the expressions for derivatives of the cost function \eqref{eq:cost-function-diagonalization} in the case $d \in \{2,3\}$  from \cite{Como94:ifac},
but give proofs for completeness.
 \fin
\begin{lemma}[{\cite[\ku eqn. \fin (22)-(23)]{Como94:ifac}}]\label{lem:deriv_order_3}
In the notations of \cref{sec:cost_function_iteration}, the derivatives of $h_{\ku k \fin}$  \ku have the following form. \fin
\begin{itemize}
\item \ku For $d=3$:  \fin
%
\begin{align*}
& h^{'}_{\ku k\fin} (\theta) = 6(\tenselem{T}_{111}\tenselem{T}_{112}-\tenselem{T}_{122}\tenselem{T}_{222}),\\
&h^{''}_{\ku k\fin}(\theta) = -6(\tenselem{T}_{111}^2+\tenselem{T}_{222}^2-3\tenselem{T}_{112}^2-3\tenselem{T}_{122}^2
-2\tenselem{T}_{111}\tenselem{T}_{122}-2\tenselem{T}_{112}\tenselem{T}_{222}).
\end{align*}
\item \ku For $d=2$:
\begin{align*}
& h^{'}_{\ku k\fin} (\theta) = 4(\tenselem{T}_{11}\tenselem{T}_{12}-\tenselem{T}_{12}\tenselem{T}_{22}),\\
&h^{''}_{\ku k\fin}(\theta) = -4(\tenselem{T}_{11}^2+\tenselem{T}_{22}^2-2\tenselem{T}_{11}\tenselem{T}_{22}-4\tenselem{T}_{12}^2). \fin
\end{align*}
\end{itemize}
\ku
The expressions for $h^{'}_{k} (0)$ and $h^{''}_{k} (0)$  can be found by substituting $\tens{T}$ to $\tens{W}$ in the above expressions.
\fin
\end{lemma}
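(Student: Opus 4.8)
The plan is to get the first derivative from the chain rule combined with the projected-gradient computation of \cref{sec:proj_grad}, and then to differentiate once more using a compact formula for $\frac{d}{d\theta}\tens{T}(\theta)$. First I would set $\matr{R}(\theta) = \matr{Q}_{k-1}\matr{G}(\theta)$ with $\matr{G}(\theta) = \Gmat{1}{2}{\theta}$, so that $h_k(\theta) = f(\matr{R}(\theta))$ and $\tens{T}(\theta) = \tens{A}\contr{1}\matr{R}^{\T}(\theta)\cdots\contr{d}\matr{R}^{\T}(\theta)$ is exactly the tensor ``$\tens{W}$'' of \cref{sec:proj_grad} evaluated at $\matr{R}(\theta)$. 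Since the Givens rotations form a one-parameter subgroup, $\frac{d}{d\theta}\matr{G}(\theta) = \matr{G}(\theta)\,\delta_{1,2}$, hence $\matr{R}'(\theta) = d_{1,2}(\matr{R}(\theta))$, which lies in the tangent space of $\ON{n}$ at $\matr{R}(\theta)$. The chain rule and \cref{eq:ProjGrad}, \cref{eq:Lambda} then give
\[
h'_k(\theta) = \langle \nabla f(\matr{R}(\theta)), d_{1,2}(\matr{R}(\theta))\rangle = \langle \ProjGrad{f}{\matr{R}(\theta)}, d_{1,2}(\matr{R}(\theta))\rangle = \langle \matr{\Lambda}(\matr{R}(\theta)), \delta_{1,2}\rangle = -2\,\Lambda_{1,2}(\matr{R}(\theta)),
\]
the last step using skew-symmetry of $\matr{\Lambda}$ (this generalizes \cref{eq:gradient_projection_via_h_k} to arbitrary $\theta$). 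Substituting $d\in\{2,3\}$ into the explicit formula \cref{eq:Projgrad-general} for $\Lambda_{1,2}$ then yields the claimed expressions for $h'_k(\theta)$.

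For the second derivative I would differentiate $h'_k$, which requires $\frac{d}{d\theta}\tens{T}(\theta)$. Applying the product rule to $\tens{T}(\theta) = \tens{W}\contr{1}\matr{G}^{\T}(\theta)\cdots\contr{d}\matr{G}^{\T}(\theta)$, using $\frac{d}{d\theta}\matr{G}^{\T}(\theta) = -\delta_{1,2}\matr{G}^{\T}(\theta)$ and the identity $(\tens{X}\contr{p}\matr{A})\contr{p}\matr{B} = \tens{X}\contr{p}(\matr{B}\matr{A})$, one obtains the compact formula $\tens{T}'(\theta) = -\sum_{p=1}^{d}\tens{T}(\theta)\contr{p}\delta_{1,2}$. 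Reading off the entries whose indices lie in $\{1,2\}$ (the only ones affected by the rotation) gives, for $d=3$, the linear relations
\[
\tenselem{T}'_{111} = 3\tenselem{T}_{112},\quad \tenselem{T}'_{112} = 2\tenselem{T}_{122}-\tenselem{T}_{111},\quad \tenselem{T}'_{122} = \tenselem{T}_{222}-2\tenselem{T}_{112},\quad \tenselem{T}'_{222} = -3\tenselem{T}_{122},
\]
and the analogous relations $\tenselem{T}'_{11} = 2\tenselem{T}_{12}$, $\tenselem{T}'_{12} = \tenselem{T}_{22}-\tenselem{T}_{11}$, $\tenselem{T}'_{22} = -2\tenselem{T}_{12}$ for $d=2$. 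Plugging these into $\frac{d}{d\theta}h'_k(\theta)$ and collecting terms produces the stated formula for $h''_k(\theta)$. Finally, $\matr{G}(0) = \matr{I}_n$ gives $\tens{T}(0) = \tens{W}$, which is exactly the last assertion of the lemma about $h'_k(0)$ and $h''_k(0)$.

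The hard part is really just bookkeeping: getting the combinatorial coefficients (the $3$'s and $2$'s) right when contracting $\delta_{1,2}$ against a symmetric tensor — these come from counting which of the $d$ modes carries the transposed index — together with the polynomial simplification that turns the product-rule expansion of $\frac{d}{d\theta}h'_k$ into the factored form in the statement. There is no conceptual obstacle; an alternative but messier route would be to expand $\tenselem{T}_{1\cdots1}$ and $\tenselem{T}_{2\cdots2}$ directly as trigonometric polynomials in $\theta$ via \cref{eq:subst_tan_G} and differentiate, but I would prefer the projected-gradient argument above.
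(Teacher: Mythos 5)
Your proof follows essentially the same two steps as the paper's: express $h'_k(\theta) = -2\Lambda_{1,2}(\matr{Q}_{k-1}\matr{G}(\theta))$ via the chain rule and the projected-gradient formula \cref{eq:Projgrad-general}, then differentiate once more using the entrywise derivative relations for $\tens{T}(\theta)$. Incidentally, your relation $\tenselem{T}'_{122} = \tenselem{T}_{222}-2\tenselem{T}_{112}$ is the correct one; the paper's printed $\tenselem{T}_{222}-2\tenselem{T}_{111}$ is a typo, as re-deriving $h''_k$ confirms.
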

\begin{proof}
\ku From \eqref{eq:Projgrad-general} \fin 
we have
\begin{align*}
&h^{'}_{\ku k\fin}(\theta)=\langle\ProjGrad{\textit{f}}{\matr{Q}_{k-1}\matr{G}(\theta)}, \matr{Q}_{k-1}\matr{G}^{'}(\theta)\rangle
=\langle \matr{Q}_{k-1}\matr{G}(\theta)\matr{\Lambda}(\matr{Q}_{k-1}\matr{G}(\theta)), \matr{Q}_{k-1}\matr{G}^{'}(\theta)\rangle\\
&=\langle\matr{\Lambda}(\matr{Q}_{k-1}\matr{G}(\theta)), {\matr{G}(\theta)}^{T}\matr{G}^{'}(\theta)\rangle 
= -2\ku{\Lambda}_{1,2}\fin(\matr{Q}_{k-1}\matr{G}(\theta)) \\
& = \begin{cases} 
6(\tenselem{T}_{111}\tenselem{T}_{112}-\tenselem{T}_{122}\tenselem{T}_{222}), & d = 3, \\
4(\tenselem{T}_{11}\tenselem{T}_{12}-\tenselem{T}_{12}\tenselem{T}_{22}), & d=2,
\end{cases}
\end{align*}
where $\Lambda$ is  defined in \cref{eq:Projgrad-general}.

\ku After straightforward differentiation, we have that for $d=3$
\[
\frac{d\tenselem{T}_{111}}{d\theta}=3\tenselem{T}_{112}, 
\frac{d\tenselem{T}_{112}}{d\theta}=2\tenselem{T}_{122}-\tenselem{T}_{111},
\frac{d\tenselem{T}_{122}}{d\theta}=\tenselem{T}_{222}-2\tenselem{T}_{111},
\frac{d\tenselem{T}_{222}}{d\theta}=-3\tenselem{T}_{122},
\]
and for $d=2$
\[
\frac{d\tenselem{T}_{11}}{d\theta} = 2\tenselem{T}_{12},\quad
\frac{d\tenselem{T}_{12}}{d\theta} = (\tenselem{T}_{22} - \tenselem{T}_{11}),\quad\frac{d \tenselem{T}_{22} }{d\theta} = -2\tenselem{T}_{12}.
\]
The equation for $h^{''}_{\ku k\fin}(\theta)$ follows by substitution.
\end{proof}

\subsection{The general case ($m > 1$)}\label{sec:deriv_general}
\ku In the general case,  the cost function in problem \cref{eq:sym_tensor_diagonalization} is \fin
\begin{equation}\label{eq:sim_diag_cost}
{f}(\matr{Q})=\sum\limits_{\ell=1}^{m}{f}^{\ku(\ell)\fin}\fin(\matr{Q}),
\end{equation}
where $\ku{f}^{\ku(\ell)\fin}(\matr{Q})=\|\diag{\tens{W}^{(\ell)}}\|^2$
for any $1\leq\ell\leq m$.

\ku By linearity, the cost function $h_k(\theta)$ in \cref{eq:h_k} in every iteration can be conveniently written as \fin
\begin{equation}\label{eq:h-tensor-problem}
h_{\ku k\fin}(\theta) = \sum\limits_{\ell=1}^{m} \ku h^{(\ell)}_{k}\fin(\theta),
\end{equation}
\ku where \fin
\[
\ku h_{k}^{(\ell)}\fin(\theta) \eqdef {f}^{\ku(\ell)\fin}(\matr{Q}_{k-1}\Gmat{i_k}{j_k}{\theta}) =  \|\diag{\tens{T}^{(\ell)}}(\theta)\|^2,
\]
\ku
and $\tens{T}^{(\ell)}(\theta)$ is defined in the same way as $\tens{T}$ in \cref{sec:cost_function_iteration}. Therefore, the derivatives of $\ku h_{k}^{(\ell)}\fin$ can be obtained in the same way as in \cref{sec:deriv_2_3}

Finally, as in  \cref{sec:cost_function_iteration}, we can use a change of variables 
\begin{equation}\label{eq:tangent_change}
\tau_k(x) = h_k(\arctan(x)),
\end{equation}
 which leads to 
\[
\tau_k(x) = \sum\limits_{\ell=1}^{m} \tau^{(\ell)}_k(x)
\]
where, for $1\leq\ell\leq m$,
\[
\tau^{(\ell)}_k(x) = h^{(\ell)}_k(\arctan(x)) = \frac{\rho^{(\ell)}(x)}{(1+x^2)^d}.
\]

\fin

\section{Global convergence of Jacobi-G algorithm for symmetric low-order tensors}\label{sec:jacobi_G_global}

\subsection{\L{}ojasiewicz gradient inequality}
In this subsection, we recall some important results about the convergence of iterative algorithms.
The discrete-time analogue of classical \L{}ojasiewicz's theorem was proposed in \cite{AbsMA05:sjo},
and make it possible to prove the convergence of many algorithms
\cite{Usch15:pjo,Chu14:GlobalALS}.
In \cite{SU15:pro},
to prove the convergence of projected line-search methods on the real-algebraic variety
of real $m\times n$ matrices of rank at most $k$,
the optimization problem
\begin{equation*}
\min\limits_{x\in\mathcal{M}}\textit{f}(x)
\end{equation*}
on a closed subset $\mathcal{M}\subseteq\RR^n$ was considered.
Suppose that the tangent cone ${T}_{x} \mathcal{M}$ at $x\in\mathcal{M}$ is a linear space.
Let $\ProjGrad{f}{x}$ be the  projection of the Euclidean gradient $\nabla\textit{f}(x)$ on the tangent space at $x$.
{We first introduce the definition of an analytic submanifold in $\RR^n$.}

\begin{definition}[{\cite[Def. 2.7.1]{krantz2002primer}}]
\jl A set $\mathcal{M}\subseteq\RR^n$ is called an $m$-dimensional {\it real analytic submanifold}
if, for each $p\in\mathcal{M}$, there exists an open subset $\mathcal{U}\subseteq\RR^m$ and a real analytic
function $f:\mathcal{U}\rightarrow\RR^n$ which maps open subsets of $\mathcal{U}$ onto relatively open subsets
of $\mathcal{M}$ and which is such that
\[
p\in f(\mathcal{U})\ \ \text{and}\ \ \text{rank}\,\matr{J}_f(u)=m,\ \ \forall u\in \mathcal{U},
\]
where $\matr{J}_f(u)$ is the Jacobian matrix of $f$ at $u$. \fin
\end{definition}

The following results were proved in \cite{SU15:pro}.

\begin{lemma}\label{lemma-SU15}
Let $\mathcal{M}\subseteq\RR^n$ be an analytic submanifold.
Then any point $x\in \mathcal{M}$ satisfies a \L{}ojasiewicz inequality for
$\ProjGrad{f}{x}$,
that is,
there exist $\delta>0$, $\sigma>0$ and $\zeta\in (0,1/2]$
such that for all $y\in\mathcal{M}$ with $\|y-x\|<\delta$,
it holds that
$$|\textit{f}(x)-\textit{f}(y)|^{1-\zeta}\leq \sigma\|\ProjGrad{f}{x}\|.$$
\end{lemma}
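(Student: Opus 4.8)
The plan is to deduce \cref{lemma-SU15} from the classical \L{}ojasiewicz gradient inequality for a real-analytic function on an open subset of $\RR^m$, applied to the pullback of $\textit{f}$ through a local analytic parametrization of $\mathcal{M}$. This of course uses that $\textit{f}$ is real analytic, which holds for the polynomial cost functions considered in this paper; for merely $C^\infty$ functions the inequality has to be assumed rather than derived.

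First I would fix $x \in \mathcal{M}$ and use the definition of an analytic submanifold to produce an open set $\mathcal{U} \subseteq \RR^m$, a point $u_0 \in \mathcal{U}$ with $\phi(u_0) = x$, and a real analytic map $\phi : \mathcal{U} \to \RR^n$ with $\operatorname{rank} \matr{J}_\phi(u) = m$ for all $u \in \mathcal{U}$ that maps open sets onto relatively open subsets of $\mathcal{M}$; after shrinking $\mathcal{U}$, this $\phi$ is a homeomorphism onto a relatively open neighborhood of $x$ in $\mathcal{M}$, and the column space of $\matr{J}_\phi(u)$ coincides with the tangent space $T_{\phi(u)}\mathcal{M}$ (in particular the tangent cone at $x$ is automatically the linear space $T_x\mathcal{M}$). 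Next I would set $g \eqdef \textit{f} \circ \phi$, observe that $g$ is real analytic on $\mathcal{U}$, and apply the classical \L{}ojasiewicz gradient inequality to $g$ at $u_0$, obtaining $\delta' > 0$, $\sigma' > 0$ and $\zeta \in (0,1/2]$ with $|g(u_0) - g(u)|^{1-\zeta} \le \sigma' \|\nabla g(u)\|$ whenever $\|u - u_0\| < \delta'$.

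The core step is to compare $\nabla g(u)$ with $\ProjGrad{f}{\phi(u)}$. By the chain rule $\nabla g(u) = \matr{J}_\phi(u)^{\T} \nabla \textit{f}(\phi(u))$, and since the columns of $\matr{J}_\phi(u)$ span $T_{\phi(u)}\mathcal{M}$ while $\ProjGrad{f}{\phi(u)}$ is by definition the orthogonal projection of $\nabla\textit{f}(\phi(u))$ onto that subspace, the component of $\nabla\textit{f}(\phi(u))$ normal to $T_{\phi(u)}\mathcal{M}$ lies in the kernel of $\matr{J}_\phi(u)^{\T}$; hence $\nabla g(u) = \matr{J}_\phi(u)^{\T} \ProjGrad{f}{\phi(u)}$ and therefore $\|\nabla g(u)\| \le C \|\ProjGrad{f}{\phi(u)}\|$, where $C \eqdef \max\{\|\matr{J}_\phi(u)\| : \|u - u_0\| \le \delta'/2\} < \infty$ by continuity on a compact set. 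Finally, since $\phi$ is open there is $\delta > 0$ such that every $y \in \mathcal{M}$ with $\|y - x\| < \delta$ can be written $y = \phi(u)$ for some $u$ with $\|u - u_0\| \le \delta'/2$; combining the inequalities gives $|\textit{f}(x) - \textit{f}(y)|^{1-\zeta} = |g(u_0) - g(u)|^{1-\zeta} \le \sigma' C \|\ProjGrad{f}{y}\|$, which is the claimed estimate with $\sigma \eqdef \sigma' C$.

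I expect the main obstacle to be not any single estimate but the bookkeeping around the parametrization: ensuring $\phi$ can be taken as a homeomorphism so that a Euclidean-ball neighborhood of $x$ in $\mathcal{M}$ pulls back into a ball around $u_0$, verifying that $T_x\mathcal{M}$ is exactly the column space of $\matr{J}_\phi(u_0)$ so that $\ProjGrad{f}{\cdot}$ is the right projection, and checking that the resulting constants $\sigma$, $\delta$ and the exponent $\zeta$ are independent of the chosen chart. The only analytic input is the classical \L{}ojasiewicz inequality; everything else is linear algebra and a change of variables.
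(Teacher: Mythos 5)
The paper gives no proof of this lemma; it is stated as an import from \cite{SU15:pro}, so there is no in-paper argument to compare against. Your pullback argument is the standard derivation and is essentially the one behind the cited result: compose $f$ with a local analytic chart $\phi$ around $x$ (shrunk so that it is a homeomorphism onto a relatively open neighborhood), apply the classical Euclidean \L{}ojasiewicz gradient inequality to $g = f\circ\phi$, and transfer the estimate via the chain rule. The key observation, that $\matr{J}_\phi(u)^\T$ annihilates the component of $\nabla f(\phi(u))$ normal to $T_{\phi(u)}\mathcal{M}$, so that $\nabla g(u) = \matr{J}_\phi(u)^\T\,\ProjGrad{f}{\phi(u)}$ and hence $\|\nabla g(u)\|\le\|\matr{J}_\phi(u)\|\cdot\|\ProjGrad{f}{\phi(u)}\|$, is correctly identified and is exactly the one-sided bound needed once the classical inequality is invoked; the openness/compactness bookkeeping converting a metric ball in $\mathcal{M}$ into a ball in chart coordinates is also handled correctly.

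Two remarks concern the statement as printed rather than your argument. First, the right-hand side should read $\sigma\,\|\ProjGrad{f}{y}\|$, not $\sigma\,\|\ProjGrad{f}{x}\|$: with $x$ fixed, the printed version would force $f$ to be locally constant on $\mathcal{M}$ near any stationary point, which is absurd, and the $y$-version is also the one actually used in \cref{theorem-SU15}. Your proof delivers the $y$-version. Second, the lemma tacitly assumes $f$ to be real analytic (this hypothesis appears in \cref{theorem-SU15} but is omitted here); you are right to make it explicit, since the classical \L{}ojasiewicz inequality applied to $g = f\circ\phi$ in the chart requires analyticity and fails for general $C^\infty$ functions.
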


\begin{theorem}[{\cite[Theorem 2.3]{SU15:pro}}]\label{theorem-SU15}
Let $\mathcal{M}\subseteq\RR^n$ be an analytic submanifold
and $\{x_k:k\in\NN\}\subset\mathcal{M}$ be a sequence.
Suppose that $f$ is real analytic and, for large enough $k$,\\
(i) there exists $\sigma>0$ such that
$$|\textit{f}(x_{k+1})-\textit{f}(x_k)|\geq \sigma\|\ProjGrad{f}{x_k}\|\|x_{k+1}-x_{k}\|;$$
(ii) $\ProjGrad{f}{x_k}=0$ implies that $x_{k+1}=x_{k}$.\\
Then any accumulation point of $\{x_k:k\in\NN\}\subseteq\mathcal{M}$ is the only limit point.
\end{theorem}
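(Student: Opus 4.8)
I would run the discrete \L{}ojasiewicz argument of \cite{AbsMA05:sjo}, transported to the analytic submanifold $\mathcal{M}$ via \cref{lemma-SU15}. Fix an accumulation point $x_*$ of $\{x_k\}$; the goal is to show that the whole sequence converges to $x_*$. Since the value sequence $\{f(x_k)\}$ is monotone (this holds for the algorithms to which the theorem is applied, and is what makes hypothesis (i) usable through telescoping), it converges, and being the limit of a subsequence tending to $x_*$ with $f$ continuous, its limit is $\ell := f(x_*)$. Applying \cref{lemma-SU15} at $x_*$, there are $\delta>0$, $\sigma_0>0$ and $\zeta\in(0,1/2]$ with
\[
|f(y)-\ell|^{1-\zeta}\le\sigma_0\,\|\ProjGrad{f}{y}\|\qquad\text{for all }y\in\mathcal{M}\text{ with }\|y-x_*\|<\delta .
\]
(One needs no a priori knowledge that $x_*$ is stationary: where $\ProjGrad{f}{\cdot}$ is bounded away from $0$ this inequality is trivial for $\delta$ small.) The plan is then to prove that the trajectory has finite length, $\sum_k\|x_{k+1}-x_k\|<\infty$; then $\{x_k\}$ is Cauchy, hence convergent, and its limit, being an accumulation point equal to the whole-sequence limit, must be $x_*$.

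\textbf{One-step estimate.} I may assume $\Delta_k:=|f(x_k)-\ell|>0$ for all large $k$: since $\Delta_k$ is non-increasing, the alternative is $\Delta_k=0$ for $k\ge k_0$, whence $f(x_k)=\ell$ eventually, so (i) forces $\|\ProjGrad{f}{x_k}\|\,\|x_{k+1}-x_k\|=0$ and, with (ii), $x_{k+1}=x_k$ for all $k\ge k_0$; the sequence is eventually constant, hence equal to $x_*$, and we are done. Assuming $\Delta_k>0$, using concavity of $s\mapsto s^{\zeta}$ on $s>0$ one gets, for any $k$ with $x_k$ in the $\delta$-ball about $x_*$,
\begin{align*}
\Delta_k^{\zeta}-\Delta_{k+1}^{\zeta}
&\ge\zeta\,\Delta_k^{\zeta-1}(\Delta_k-\Delta_{k+1})
=\zeta\,\Delta_k^{\zeta-1}\,|f(x_{k+1})-f(x_k)|\\
&\ge\frac{\zeta}{\sigma_0\,\|\ProjGrad{f}{x_k}\|}\cdot\sigma\,\|\ProjGrad{f}{x_k}\|\,\|x_{k+1}-x_k\|
=\frac{\zeta\sigma}{\sigma_0}\,\|x_{k+1}-x_k\| ,
\end{align*}
where $\Delta_k-\Delta_{k+1}=|f(x_{k+1})-f(x_k)|$ uses monotonicity, the second line uses \cref{lemma-SU15} (to bound $\Delta_k^{\zeta-1}=|f(x_k)-\ell|^{-(1-\zeta)}$ from below) together with hypothesis (i), and the case $\ProjGrad{f}{x_k}=0$ is trivial by (ii). Telescoping over $k_0\le k\le N$ yields $\sum_{k=k_0}^{N}\|x_{k+1}-x_k\|\le\frac{\sigma_0}{\zeta\sigma}\,\Delta_{k_0}^{\zeta}$ — \emph{provided} $x_{k_0},\dots,x_N$ all lie in the $\delta$-ball.

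\textbf{Bootstrap, the main obstacle.} The one-step bound is conditional on the iterates staying inside the $\delta$-ball, so it cannot be summed outright; closing this self-reference by induction is the crux. Choose $k_0$ so large that (a) conditions (i)--(ii) hold for all $k\ge k_0$; (b) $\|x_{k_0}-x_*\|<\delta/2$ (possible since a subsequence converges to $x_*$); (c) $\frac{\sigma_0}{\zeta\sigma}\,\Delta_{k_0}^{\zeta}<\delta/2$ (possible since $\Delta_k\to0$). Then by induction $x_k$ stays in the $\delta$-ball for every $k\ge k_0$: if $x_{k_0},\dots,x_N$ lie in it, the telescoped estimate applies up to $N$, so
\[
\|x_{N+1}-x_*\|\le\|x_{k_0}-x_*\|+\sum_{k=k_0}^{N}\|x_{k+1}-x_k\|<\frac{\delta}{2}+\frac{\sigma_0}{\zeta\sigma}\,\Delta_{k_0}^{\zeta}<\delta ,
\]
extending the induction. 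Hence the estimate holds for all $N$, giving $\sum_{k\ge k_0}\|x_{k+1}-x_k\|\le\frac{\sigma_0}{\zeta\sigma}\,\Delta_{k_0}^{\zeta}<\infty$; $\{x_k\}$ is Cauchy, therefore convergent, and since $x_*$ is an accumulation point the limit equals $x_*$.

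The genuinely delicate points are thus the self-referential neighbourhood bound handled by the induction above, and — at the outset — the reduction to a monotone, convergent value sequence: hypothesis (i) is stated with an absolute value, but it is only usable (and the conclusion only holds) when the increments $f(x_{k+1})-f(x_k)$ keep a constant sign, which is automatic for the monotone Jacobi iterations of this paper. Everything else is the concavity inequality for $s\mapsto s^{\zeta}$ and telescoping.
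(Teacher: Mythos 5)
The paper does not prove this theorem; it is quoted (with a notational adaptation from minimization to maximization) from the cited Schneider--Uschmajew reference, so there is no in-paper proof to compare against. Your argument reconstructs the standard discrete \L{}ojasiewicz convergence scheme --- the concavity estimate on $s\mapsto s^\zeta$, the telescoping bound on the step lengths, and the bootstrap/induction keeping iterates inside the \L{}ojasiewicz neighbourhood --- and it is essentially the proof given in that reference and in Absil--Mahony--Andrews; I see no gap. Two remarks are worth recording. First, you are right to flag that hypothesis (i), written with an absolute value, is not by itself usable: the telescoping requires the increments $f(x_{k+1})-f(x_k)$ to keep a constant sign. In the source theorem the condition is stated \emph{without} the absolute value (a sufficient-increase/decrease inequality), which carries the needed monotonicity; the absolute-value form printed here is a small imprecision, and the monotonicity does hold in every application made in this paper. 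Second, the \L{}ojasiewicz inequality as printed in \cref{lemma-SU15} has $\|\ProjGrad{f}{x}\|$ on the right-hand side, where the intended expression is $\|\ProjGrad{f}{y}\|$ (the gradient at the varying point); you silently use the corrected version, which is what the argument needs.
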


Now we apply \cref{theorem-SU15} to the compact orthogonal group
$\ON{n}\subset\RR^{n\times n}$ and get \cref{theorem-convegence-general},
which \jl will allow us \fin to prove the global convergence of Jacobi-G algorithm in \cref{subsection-Jacobi-G-3}.

\begin{corollary}\label{theorem-convegence-general}
Let f be real analytic in \cref{alg:jacobi}.
Suppose that, for large enough k,\\
(i) there exists $\sigma>0$ such that
\begin{equation}\label{condition-coro-KL}
|\textit{f}(\matr{Q}_{k})-\textit{f}(\matr{Q}_{k-1})|\geq \sigma\|\ProjGrad{f}{\matr{Q}_{k-1}}\| \|\matr{Q}_{k}-\matr{Q}_{k-1}\|;
\end{equation}
(ii) $\ProjGrad{f}{\matr{Q}_{k-1}}=0$ implies that $\matr{Q}_{k}=\matr{Q}_{k-1}$.\\
Then the iterations $\{\matr{Q}_k:k\in\NN\}$ converge to a point $\matr{Q}_*\in\ON{n}$.
\end{corollary}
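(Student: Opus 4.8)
The plan is to obtain this corollary as a direct specialization of \cref{theorem-SU15} to $\mathcal{M} = \SON{n}$, followed by a short compactness argument that upgrades the ``unique accumulation point'' conclusion of that theorem to convergence of the whole sequence $\{\matr{Q}_k\}$.

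First I would record the two structural facts about the feasible set. Since every update $\matr{U}_k = \Gmat{i_k}{j_k}{\theta^{*}_k}$ has determinant $1$, the iterates $\matr{Q}_k = \matr{Q}_{k-1}\matr{U}_k$ all remain in the connected component of $\ON{n}$ containing $\matr{Q}_0$ --- namely $\SON{n}$ when $\matr{Q}_0 \in \SON{n}$ --- which is a \emph{compact} set. Moreover, $\SON{n}$ (like $\ON{n}$) is a real-analytic submanifold of $\RR^{n\times n}$ in the sense of the definition above: it is the solution set of the analytic (in fact polynomial) equations $\matr{Q}^{\T}\matr{Q} = \matr{I}_n$, whose differential has constant rank, so it carries analytic local parametrizations (for instance by the Cayley transform or the matrix exponential), and its tangent cone at every point coincides with the tangent space, hence is a linear subspace. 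In particular the standing hypotheses on $\mathcal{M}$ in \cref{theorem-SU15} hold, and, as noted right after \eqref{eq:ProjGrad}, the projected gradient $\ProjGrad{f}{\matr{Q}}$ used in \eqref{condition-coro-KL} is precisely the projection of the Euclidean gradient $\nabla f(\matr{Q})$ onto that tangent space, i.e.\ the object to which \cref{theorem-SU15} refers. (Throughout, $f$ is understood as the restriction of an analytic function defined on a neighbourhood in $\RR^{n\times n}$, as in all the examples, so that $\nabla f$ makes sense.)

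Next I would apply \cref{theorem-SU15} with the identification $x_k := \matr{Q}_{k-1}$, so that $x_{k+1} = \matr{Q}_k$. With this choice, conditions (i) and (ii) of the corollary --- assumed to hold for all large $k$ --- are literally conditions (i) and (ii) of \cref{theorem-SU15}, and $f$ is real analytic. The theorem then yields that every accumulation point of $\{\matr{Q}_k\}$ is the unique limit point. Finally I would invoke compactness: since $\{\matr{Q}_k\}$ lies in the compact set $\SON{n}$, it has at least one accumulation point, hence by the previous sentence exactly one, say $\matr{Q}_*$; and a sequence in a compact metric space with a single accumulation point converges to it (otherwise a subsequence bounded away from $\matr{Q}_*$ would, by compactness, produce a second accumulation point, a contradiction). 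Therefore $\matr{Q}_k \to \matr{Q}_* \in \ON{n}$.

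The substance of this proof is essentially bookkeeping. The only point requiring a little care is the alignment of the index conventions with those of \cref{theorem-SU15} and the observation that the ``for large enough $k$'' clause is harmless --- modifying finitely many initial iterates changes neither the set of accumulation points nor convergence. The real-analyticity and compactness of $\SON{n}$, which are the genuine geometric inputs, are standard.
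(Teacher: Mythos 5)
Your proof is correct and takes essentially the same route as the paper, which simply states that \cref{theorem-convegence-general} follows by applying \cref{theorem-SU15} to the compact real-analytic submanifold $\ON{n}\subset\RR^{n\times n}$, without spelling out any details. You have merely filled in the bookkeeping the paper treats as immediate: the analyticity and compactness of the feasible manifold, the index shift $x_k := \matr{Q}_{k-1}$, and the standard compactness argument upgrading ``unique accumulation point'' to convergence of the whole sequence.
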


\begin{remark}
Under the same assumptions as in \cref{theorem-convegence-general},
\cref{theorem-ishteva-stationary-point} tells us that \cref{alg:jacobi-G} converges to a stationary point.
\end{remark}

\subsection{Global convergence of Jacobi-G algorithm for \ku matrices and \fin 3rd-order tensors}\label{subsection-Jacobi-G-3}
In this section, we consider the case $d \in \{2,3\}$, that is, one of the following options:
\begin{itemize}
\item \ku for a set of 3rd-order symmetric tensors \fin
 $\{\tens{A}^{(\ell)}:1\leq\ell\leq m\}\subseteq\RR^{n\times n\times n}$, the cost function  is
\begin{equation}\label{order-3-cost-function}
f(\matr{Q}) =
\sum\limits_{\ell=1}^{m}\|\diag{\tens{A}^{(\ell)} \contr{1}\matr{Q}^{\T}\contr{2}\matr{Q}^{\T}\contr{3}\matr{Q}^{\T}}\|^2;
\end{equation}
\item for a set $\{\matr{A}^{(\ell)}:1\leq\ell\leq m\}\subseteq\RR^{n\times n}$ of symmetric matrices, the cost function  is
\begin{equation}\label{order-2-cost-function}
f(\matr{Q})= \sum_{\ell=1}^{m}
\|\diag{\matr{Q}^{\T} \matr{A}^{(\ell)} \matr{Q}}\|^2.
\end{equation}

\end{itemize}

\begin{theorem}\label{theorem-convergence-order-3}
For the cost function  \cref{order-3-cost-function} or \cref{order-2-cost-function},
\cref{alg:jacobi-G} converges to a stationary point of $f$ in $\ON{n}$, for any starting point $\matr{Q}_{0}$.
\end{theorem}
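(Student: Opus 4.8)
The plan is to verify the two hypotheses of \cref{theorem-convegence-general} for the cost functions \cref{order-3-cost-function} and \cref{order-2-cost-function}, applied to the sequence $\{\matr{Q}_k\}$ produced by \cref{alg:jacobi-G}. Since these cost functions are polynomials in the entries of $\matr{Q}$, they are real analytic, so the corollary applies as soon as conditions (i) and (ii) are established. Condition (ii) is immediate: if $\ProjGrad{f}{\matr{Q}_{k-1}} = 0$, then by \cref{eq:gradient_projection_via_h_k} we have $h_k'(0) = 0$ for every admissible pair, so the pair-selection rule \cref{eq:pair_selection_gradient} forces $\|\ProjGrad{f}{\matr{Q}_{k-1}}\| = 0$; in that degenerate situation the algorithm has reached a stationary point and (by \cref{rem:maximizer_choice}, choosing the smallest-magnitude maximizer) returns $\theta_k^* = 0$, hence $\matr{Q}_k = \matr{Q}_{k-1}$. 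The real work is condition (i).

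For condition (i), first reduce to a one-dimensional statement. Using $\matr{Q}_k = \matr{Q}_{k-1}\Gmat{i_k}{j_k}{\theta_k^*}$ and the identity $\|\Gmat{i_k}{j_k}{\theta} - \matr{I}\| = 2\,|\sin(\theta/2)| \asymp |\theta|$ for $\theta$ in the working interval $[-\pi/4,\pi/4]$, one has $\|\matr{Q}_k - \matr{Q}_{k-1}\| = \|\Gmat{i_k}{j_k}{\theta_k^*} - \matr{I}\| \le |\theta_k^*|$ up to an absolute constant; also $f(\matr{Q}_k) - f(\matr{Q}_{k-1}) = h_k(\theta_k^*) - h_k(0) \ge 0$ since $\theta_k^*$ maximizes $h_k$. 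Next, combine \cref{eq:gradient_projection_via_h_k} with the pair-selection rule: $|h_k'(0)| = |\langle \ProjGrad{f}{\matr{Q}_{k-1}}, d_{i_k,j_k}(\matr{Q}_{k-1})\rangle| \ge \varepsilon\|\ProjGrad{f}{\matr{Q}_{k-1}}\|$. Thus it suffices to show that there is a constant $c > 0$ (uniform over the iterations) with
\[
h_k(\theta_k^*) - h_k(0) \ge c\,|h_k'(0)|\,|\theta_k^*|.
\]
This is where the low-order structure $d \in \{2,3\}$ enters. By \cref{sec:cost_function_iteration} (and \cref{eq:tau_rational}), after the substitution $x = \tan\theta$ the function $h_k$ becomes a rational function $\tau_k(x) = \rho(x)/(1+x^2)^d$ with $\deg\rho \le 2d$, so for $d=2,3$ the numerator has degree at most $6$; equivalently, $h_k$ is a trigonometric polynomial of low, bounded degree, and its coefficients are uniformly bounded because $\SON{n}$ is compact and $\|\tens{W}^{(\ell)}\| = \|\tens{A}^{(\ell)}\|$ is constant along iterations. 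The inequality above is then a statement purely about the family of univariate functions $\{h_k\}$: each is analytic, uniformly bounded in $C^2$, with $h_k(0)$ bounded, and $\theta_k^*$ a maximizer on $[-\pi/4,\pi/4]$. A \L{}ojasiewicz-type inequality in one variable with a uniform exponent $1/2$ (valid because the degree is bounded) gives $h_k(\theta_k^*) - h_k(0) \gtrsim (\theta_k^*)^2$ when $h_k'(0) \ne 0$, while a Taylor/mean-value estimate with the uniform $C^2$ bound gives $|h_k'(0)| \lesssim |\theta_k^*| + (\theta_k^*)^2 \lesssim |\theta_k^*|$ near $0$; together these yield the displayed bound. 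Concretely, one can argue: either $|\theta_k^*|$ is bounded below by a fixed constant (then the bound is trivial from $h_k(\theta_k^*) - h_k(0) \ge 0$ being a maximum and using that $|\theta_k^*| \le \pi/4$ so $|h_k'(0)||\theta_k^*| \le C$), or $\theta_k^*$ is small, in which case $h_k'(\theta_k^*) = 0$ and a second-order Taylor expansion around $\theta_k^*$, combined with the sign of $h_k''$ at a maximizer, converts $|h_k'(0)| = |h_k'(0) - h_k'(\theta_k^*)|$ and $h_k(\theta_k^*) - h_k(0)$ into comparable quantities of order $(\theta_k^*)^2$.

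The main obstacle is making the constant $c$ genuinely \emph{uniform} over all iterations $k$ and all admissible pairs, and in particular handling the case where $h_k''$ nearly vanishes at the maximizer (degenerate critical point), where the second-order Taylor argument degrades. This is precisely the reason the theorem is restricted to $d \in \{2,3\}$: the bounded polynomial degree of $h_k$ guarantees a \L{}ojasiewicz exponent of exactly $1/2$ with a constant depending only on a compact set of bounded-degree polynomials (the set of possible $\rho(x)$, which is compact since the tensor norms are fixed), so the worst-case constant is still strictly positive. I would spend the most care verifying that the relevant family of numerator polynomials $\rho$ lies in a fixed compact set and that on such a compact family the \L{}ojasiewicz constant at the maximizer is bounded away from $0$; everything else — the reduction to one dimension, the norm estimate for the Givens rotation, and the final invocation of \cref{theorem-convegence-general} — is routine.
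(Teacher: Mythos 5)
Your setup matches the paper's: you invoke \cref{theorem-convegence-general}, handle condition (ii) correctly, and correctly reduce condition (i) to the one-dimensional inequality
\[
h_k(\theta_k^*) - h_k(0) \;\ge\; c\,|h_k'(0)|\,|\theta_k^*|
\]
via the elementary estimates $\|\matr{Q}_k - \matr{Q}_{k-1}\| = \|\Gmat{i_k}{j_k}{\theta_k^*} - \matr{I}\| \le \sqrt{2}\,|\theta_k^*|$ and $|h_k'(0)| \ge \varepsilon\,\|\ProjGrad{f}{\matr{Q}_{k-1}}\|$. Up to this point you are tracking the paper's argument exactly; this is also where the argument ends being correct.

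The gap is in your proof of the one-dimensional inequality, and it is not a detail you can patch by ``verifying that the family of numerator polynomials $\rho$ lies in a compact set.'' Two of your intermediate claims are false as stated. First, ``either $|\theta_k^*|$ is bounded below $\Rightarrow$ the bound is trivial'' does not follow: $h_k(\theta_k^*) - h_k(0) \ge 0$ gives no positive lower bound, and knowing $|h_k'(0)||\theta_k^*| \le C$ bounds the \emph{right}-hand side from above, not the left-hand side from below. Second, bounded polynomial degree does \emph{not} give a uniform \L ojasiewicz exponent $1/2$ at the maximizer: a degree-$2d$ polynomial can have a maximizer that is a degenerate critical point (e.g.\ multiplicity up to $2d$ in the derivative), and on a compact family one can approach such a degenerate limit, so neither the exponent $\zeta = 1/2$ nor a positive lower bound on the constant is automatic. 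This is precisely the ``degenerate $h_k''(\theta_k^*)$'' case you flag as the obstacle; your compactness argument does not handle it, it merely reformulates it.

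What actually makes $d \in \{2,3\}$ work is not low degree per se but the specific algebraic structure: for these values of $d$ the rational function $\tau_k(x) = h_k(\arctan x)$ satisfies the exact identities
\begin{align*}
\tau_k(x) - \tau_k(0) &= \frac{1}{(1+x^2)^2}\Bigl(h_k'(0)\,(x - x^3) + \tfrac{1}{2}h_k''(0)\,x^2\Bigr),\\
\tau_k'(x) &= \frac{1}{(1+x^2)^3}\Bigl(h_k'(0)\,(1 - 6x^2 + x^4) + h_k''(0)\,(x - x^3)\Bigr),
\end{align*}
i.e.\ the whole function along a rotation depends only on $h_k'(0)$ and $h_k''(0)$. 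Since $x_* = \tan\theta_k^*$ satisfies $\tau_k'(x_*) = 0$, one can eliminate $h_k''(0)$ and obtain the \emph{exact} formula
\[
\tau_k(x_*) - \tau_k(0) \;=\; \frac{x_*\, h_k'(0)}{2(1 - x_*^2)}\;\ge\;\tfrac{1}{2}\,|x_*|\,|h_k'(0)|
\]
for $|x_*|<1$ (and $h_k'(0)=0$ if $x_*\in\{0,\pm 1\}$). This bypasses any \L ojasiewicz or compactness argument entirely, gives the explicit constant $\tfrac{\sqrt{2}\varepsilon}{4}$, and is the precise sense in which the proof is ``restricted to $d\in\{2,3\}$'': for $d\ge 4$ the function $\tau_k$ involves higher derivatives of $h_k$ at $0$ and no such two-parameter reduction is available. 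You should replace the compactness/\L ojasiewicz heuristic by this explicit computation (the paper's \cref{lemma-G-inequality}).
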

\begin{remark}
In the case $m=1$ \ku and $d=3$, \fin
this is the Jacobi-G algorithm for orthogonal diagonalization of 3rd order symmetric tensors.
\cref{theorem-convergence-order-3} shows the global convergence of this algorithm.
\end{remark}

Before giving the proof of \cref{theorem-convergence-order-3}, we formulate several lemmas.

\begin{lemma}
In the case \ku $d\in\{2,3\}$,  \fin for the cost function $\tau_{\ku k\fin}(x)$ \ku defined in \cref{eq:tangent_change}, the following identities hold true
\begin{equation}\label{equation-Jacobi-G-1}
 \tau_{\ku k\fin} \fin(x)- \tau_{\ku k\fin} (0)=\frac{1}{(1+x^2)^2}
({h}^{'}_{\ku k\fin} (0)(x-x^3)+\frac{1}{2}{h}^{''}_{\ku k\fin} (0)x^2),
\end{equation}
\begin{equation}\label{equation-Jacobi-G-2}
\tau^{'}_{\ku k\fin} (x) = \frac{1}{(1+x^2)^3}(\textit{h}^{'}_{\ku k\fin} (0)(1-6x^2+x^4)+\textit{h}^{''}_{\ku k\fin} (0)(x-x^3)).
\end{equation}
\end{lemma}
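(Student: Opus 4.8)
The plan is to prove both identities \cref{equation-Jacobi-G-1} and \cref{equation-Jacobi-G-2} by the same computation: express $\tau_k$ as a rational function in $x$ via the substitution $\theta = \arctan(x)$, and then compare Taylor expansions. Since $\tau_k$ is a finite linear combination $\sum_\ell \tau_k^{(\ell)}$, and the identities are linear in $h_k$, $h'_k(0)$, $h''_k(0)$, it suffices to treat a single tensor $\tens{W}$ (i.e.\ $m=1$), the general case following by summation. First I would recall from \cref{sec:cost_function_iteration} that, for $d\in\{2,3\}$, the function $h_k(\theta)=\|\diag{\tens{T}(\theta)}\|^2$ is a trigonometric polynomial whose only nonzero Fourier frequencies (after using the $\pi/2$-periodicity argument of \cref{sec:cost_function_iteration}) are limited: for $d=2$ it is of the form $a + b\cos 2\theta + c\sin 2\theta$ and similarly controlled for $d=3$. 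Consequently, $\tau_k(x) = h_k(\arctan x)$ is of the form $\rho(x)/(1+x^2)^d$ with $\deg\rho \le 2d$, as already noted in \cref{eq:tau_rational}.

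The key step is to pin down the polynomial $\rho(x)$ (equivalently, $\tau_k(x)(1+x^2)^d$) by its value and derivatives at $x=0$, using that $\tau_k^{(j)}(0)$ is determined by $h_k^{(j)}(0)$ through the chain rule for $\theta=\arctan x$ (note $\frac{d\theta}{dx}\big|_{x=0}=1$, $\frac{d^2\theta}{dx^2}\big|_{x=0}=0$, so $\tau_k(0)=h_k(0)$, $\tau_k'(0)=h_k'(0)$, $\tau_k''(0)=h_k''(0)$). For $d=2$: writing $\tau_k(x)-\tau_k(0) = \big(h_k'(0)(x-x^3) + \tfrac12 h_k''(0)x^2\big)/(1+x^2)^2$, I would verify that the right-hand side is a rational function of the correct form (numerator of degree $\le 4$ over $(1+x^2)^2$), vanishes at $x=0$, and has first and second derivatives at $0$ equal to $h_k'(0)$ and $h_k''(0)$ respectively; since a rational function of this constrained type is determined by finitely many Taylor coefficients, matching them forces the identity. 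Concretely, the cleanest route is to plug \cref{eq:subst_tan_G} into $h_k(\arctan x) = \|\diag{\tens{T}(\arctan x)}\|^2$, expand $\tens{T}(\arctan x)$ explicitly using the rotation matrix $\matr{P}(\theta) = \frac{1}{\sqrt{x^2+1}}\begin{bmatrix}1 & -x\\ x & 1\end{bmatrix}$, collect the diagonal entries as polynomials in $x$ divided by powers of $\sqrt{x^2+1}$, square and sum, and finally simplify; the $d=3$ case is the same with one more factor. Identity \cref{equation-Jacobi-G-2} then follows by differentiating \cref{equation-Jacobi-G-1} with respect to $x$ and simplifying, or by the same Taylor-matching argument applied directly to $\tau_k'(x) = \big(h_k'(0)(1-6x^2+x^4) + h_k''(0)(x-x^3)\big)/(1+x^2)^3$.

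The main obstacle is purely the bookkeeping: making precise the claim that ``$\tau_k$ is determined by finitely many derivatives at $0$'' requires knowing the exact shape of the numerator polynomial $\rho$ — in particular that the $d=2$ numerator has no $x^4$ term once the constant is subtracted, and that the relevant coefficients are exactly $h_k'(0)$ and $\tfrac12 h_k''(0)$. The slick way to sidestep the ambiguity is to avoid abstract degree-counting entirely and instead carry out the explicit substitution $\matr G(\arctan x)$ into the formula for $h_k$ from \cref{lem:deriv_order_3} (using the derivative relations $\frac{d\tenselem{T}_{111}}{d\theta}=3\tenselem{T}_{112}$, etc., and $\frac{dx}{d\theta}=1+x^2$ at the relevant point), which yields both identities after elementary algebra. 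I expect the verification for $d=3$ to be the longer of the two, but no new ideas are needed beyond those in \cref{sec:deriv_2_3}.
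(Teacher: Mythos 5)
Your final proof route --- reduce by linearity to a single tensor, substitute $\theta = \arctan x$ explicitly into the rotated diagonal entries, expand, recognize the coefficients via \cref{lem:deriv_order_3}, then obtain \cref{equation-Jacobi-G-2} by differentiating \cref{equation-Jacobi-G-1} --- is exactly the paper's proof. So the proposal is correct and takes the same approach.

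One factual slip in the side-remark you floated and then abandoned: $\pi/2$-periodicity forces the Fourier frequencies of $h_k$ to be multiples of $4$, not $2$, so for $d=2$ (and likewise $d=3$) the correct form is $h_k(\theta)=a+b\cos 4\theta + c\sin 4\theta$, not $a+b\cos 2\theta + c\sin 2\theta$ (the latter has period $\pi$). Done with the right frequency, the abstract argument you flagged as incomplete actually closes cleanly and is slicker than the explicit expansion: since the trigonometric degree of $h_k$ is $2d\le 6<8$ for $d\in\{2,3\}$, only frequencies $0$ and $4$ survive, and then $h'_k(0)=4c$, $h''_k(0)=-16b$ determine $h_k$ up to its constant; plugging the standard rational parametrizations $\cos 4\theta=(1-6x^2+x^4)/(1+x^2)^2$ and $\sin 4\theta=4(x-x^3)/(1+x^2)^2$ with $x=\tan\theta$ immediately gives \cref{equation-Jacobi-G-1}. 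This viewpoint also explains why the lemma does not extend to $d=4$: there the degree is $8$, a frequency-$8$ term appears, and $h_k$ is no longer determined by its first two derivatives at zero. Given the $2\theta$ slip, your fallback to explicit substitution was the right call, and that route is anyway what the paper does.
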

\begin{proof}
\ku
First, by linearity of the expressions \cref{equation-Jacobi-G-1} and \cref{equation-Jacobi-G-2}, and from \cref{sec:deriv_general} we can prove the identities only for the case of a single tensor or matrix (i.e., the cost function \eqref{eq:cost-function-diagonalization}).
Second, the equality  \cref{equation-Jacobi-G-2} follows from \cref{equation-Jacobi-G-1} by straightforward differentiation and the fact that \fin
\[
\tau^{'}_{\ku k\fin}(x) = (\tau_{\ku k\fin}(x) -\tau_{\ku k\fin}(0))'.
\]
\ku 
Hence, we are left to prove \cref{equation-Jacobi-G-1} for the cost function \eqref{eq:cost-function-diagonalization}.

Recall the notation of \cref{sec:cost_function_iteration}, and consider the case $d=3$. By substitution \cref{eq:subst_tan_G}, and due to the fact that the rotation affects only first two elements on the diagonal, we get that \fin
\begin{align*}
&\tau_{\ku k\fin}(x)-\tau_{\ku k\fin}(0)=
\textit{h}_{\ku k\fin}(\theta)-\textit{h}_{\ku k\fin}(0)
= \tenselem{T}_{111}^2+\tenselem{T}_{222}^2-\tenselem{W}_{111}^2-\tenselem{W}_{222}^2\\
&=\frac{1}{(1+x^2)^3}[6(\tenselem{W}_{111}\tenselem{W}_{112}-\tenselem{W}_{122}\tenselem{W}_{222})(x-x^5)\\
&-3(\tenselem{W}_{111}^2+\tenselem{W}_{222}^2-3\tenselem{W}_{112}^2-3\tenselem{W}_{122}^2
-2\tenselem{W}_{111}\tenselem{W}_{122}-2\tenselem{W}_{112}\tenselem{W}_{222})(x^2+x^4)]\\
&=\frac{1}{(1+x^2)^2}[h^{'}_{\ku k\fin}(0)(x-x^3)+\frac{1}{2}h^{''}_{\ku k\fin}(0)x^2],
\end{align*}
\ku where the last equality follows from \cref{lem:deriv_order_3}. 

The case $d=2$ is analogous: we have \fin
\begin{align*}
\tau_{\ku k\fin}(x)&-\tau_{\ku k\fin}(0)=
\tenselem{T}_{11}^2+\tenselem{T}_{22}^2-\tenselem{W}_{11}^2-\tenselem{W}_{22}^2\\
&= \frac{2}{(1+x^2)^2}
[(2\tenselem{W}_{11}\tenselem{W}_{12} - 2\tenselem{W}_{12}\tenselem{W}_{22})(x-x^3)\\
&-(\tenselem{W}_{11}^2 + \tenselem{W}_{22}^2 - 2\tenselem{W}_{11}\tenselem{W}_{22} - 4\tenselem{W}_{12}^2)x^2]\\
&=\frac{1}{(1+x^2)^2}
[\textit{h}^{'}_{\ku k\fin}(0)(x-x^3)+\frac{1}{2}\textit{h}^{''}_{\ku k\fin}(0)x^2],
\end{align*}
\ku
where the last equality follows again from  \cref{lem:deriv_order_3}.
\fin
\end{proof}

\begin{lemma}\label{lemma-G-inequality}
In each iteration of \cref{alg:jacobi-G} for the cost function \cref{order-3-cost-function} or \cref{order-2-cost-function}, the following inequality holds true
\begin{align*}
|f(\matr{Q}_{k})-f(\matr{Q}_{k-1})|\geq \frac{\sqrt{2}\varepsilon}{4}\|\ProjGrad{f}{\matr{Q}_{k-1}}\|\cdot\|\matr{Q}_{k}-\matr{Q}_{k-1}\|
\end{align*}
for any $k\in\NN$.
\end{lemma}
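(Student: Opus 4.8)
The plan is to reduce the inequality to a one–dimensional statement about the rational function $\tau_k(x)$ near $x=0$, using the identities \cref{equation-Jacobi-G-1}--\cref{equation-Jacobi-G-2} just established. First I would fix an iteration $k$ and, without loss of generality, assume the rotated pair is $(i_k,j_k)=(1,2)$, so that $\matr{Q}_k = \matr{Q}_{k-1}\matr{G}(\theta^{*}_k)$ with $x^* = \tan\theta^{*}_k \in [-1,1]$. Observe that $\|\matr{Q}_k - \matr{Q}_{k-1}\| = \|\matr{G}(\theta^{*}_k) - \matr{I}_n\| = 2\,|\sin(\theta^{*}_k/2)|$, which is comparable to $|x^*|$ on $[-1,1]$ (in fact $\|\matr{Q}_k-\matr{Q}_{k-1}\| \le \sqrt{2}\,|x^*|$ after a short estimate, using $|\sin(\theta/2)| \le |\theta|/2$ and $|\theta| \le \tfrac{\pi}{2}|x|$ fails, so more carefully $2|\sin(\theta^*/2)| \le |\theta^*| \le$ something times $|x^*|$; the cleanest route is $\|\matr{G}(\theta)-\matr{I}_n\|^2 = 2(1-\cos\theta) = 2 - \tfrac{2}{\sqrt{1+x^2}}\cdot\tfrac{1}{\sqrt{1+x^2}} \cdot$(appropriate entry), yielding an explicit bound in $x$). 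On the other side, $|f(\matr{Q}_k) - f(\matr{Q}_{k-1})| = |\tau_k(x^*) - \tau_k(0)| = \max_{x\in[-1,1]} |\tau_k(x) - \tau_k(0)|$ since $\theta^{*}_k$ is the maximizer. Recall from \cref{eq:gradient_projection_via_h_k} that $|h'_k(0)| = |\langle \ProjGrad{f}{\matr{Q}_{k-1}}, d_{i_k,j_k}(\matr{Q}_{k-1})\rangle|$, and by the pair-selection rule \cref{eq:pair_selection_gradient} this is $\ge \varepsilon \|\ProjGrad{f}{\matr{Q}_{k-1}}\|$; also note $\tau_k'(0) = h'_k(0)$ from \cref{equation-Jacobi-G-2} evaluated at $x=0$.

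The core step is then a lower bound of the form $\max_{x\in[-1,1]} |\tau_k(x)-\tau_k(0)| \ge c\, |\tau_k'(0)|\cdot |x^*|$ for a universal constant $c$. Here is the mechanism I would use: set $g(x) := \tau_k(x) - \tau_k(0)$. Since $\theta^{*}_k$ (equivalently $x^*$) is an interior or boundary maximizer of $g$ on $[-1,1]$ with $g(0)=0$, we have $g(x^*) \ge 0$, and by the mean value theorem applied on $[0,x^*]$ (or $[x^*,0]$) there is $\xi$ between $0$ and $x^*$ with $g(x^*) = \tau_k'(\xi)\, x^*$. To turn this into a bound involving $\tau_k'(0) = h'_k(0)$ rather than $\tau_k'(\xi)$, I would instead integrate: since $x^*$ maximizes $g$ and $g$ is smooth, and since on a maximizer $g(x^*)\ge g(x)$ for all $x\in[-1,1]$, in particular $g(x^*) \ge g(t)$ for all $t$, so $g(x^*) \ge \frac{1}{|x^*|}\int_0^{x^*} g(t)\,dt$ — but a cleaner and more robust approach exploits the explicit shape of \cref{equation-Jacobi-G-1}: $g(x) = \frac{1}{(1+x^2)^2}\big(h'_k(0)(x - x^3) + \tfrac12 h''_k(0)x^2\big)$. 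The maximum over $[-1,1]$ of this explicit expression, given that $h'_k(0)$ has a fixed sign and magnitude at least $\varepsilon\|\ProjGrad{f}{\matr{Q}_{k-1}}\|$, can be bounded below directly. The trick is that choosing the sign $x = \operatorname{sign}(h'_k(0))\cdot s$ for a well-chosen small $s$ (say $s = |x^*|$ itself, or an auxiliary value like $s$ bounded away from $0$) makes the $h'_k(0)(x-x^3)$ term contribute $|h'_k(0)|\,s(1-s^2) \ge 0$, while the $h''_k(0)x^2/2$ term has magnitude $\le \tfrac12|h''_k(0)|s^2$; then using the fact that $g(x^*) \ge g(s)$ for this particular $s$, plus boundedness of $h''_k$ on the compact $\SON{n}$, one extracts a bound $g(x^*) \gtrsim |h'_k(0)|\cdot|x^*|$.

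The main obstacle — and the step I expect to require the most care — is handling the second-derivative term $h''_k(0)$, whose sign is not controlled and which could in principle cancel the first-order gain. The resolution must use that $\theta^{*}_k$ is the \emph{global} maximizer of $h_k$ on $[-\pi/4,\pi/4]$: if $h''_k(0)$ were very negative (so $0$ is nearly a local max in the relevant direction), then the maximizer $x^*$ is small, and one can still show $g(x^*) \ge g(x)$ for all $x$ forces $g(x^*)$ to dominate the value of $g$ at the point where the first-order term alone peaks, namely near $x = \operatorname{sign}(h'_k(0))\cdot\frac{1}{\sqrt 3}$ where $x-x^3$ is maximized on $[-1,1]$; evaluating $g$ there gives a term linear in $|h'_k(0)|$ minus a bounded term times $|h''_k(0)|$, which is not obviously positive. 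The honest fix is that we do not need $g(x^*) \gtrsim |h'_k(0)|$ unconditionally — we need $g(x^*) \gtrsim |h'_k(0)|\cdot|x^*|$, and when $|x^*|$ is small the required bound is correspondingly weak, so a Taylor expansion $g(x^*) = \tau_k'(0)x^* + O(|x^*|^2)$ combined with the fact that $x^*$ is a critical point ($\tau_k'(x^*)=0$, hence $\tau_k'(0) = -\int_0^{x^*}\tau_k''(t)\,dt$, giving $|x^*| \gtrsim |\tau_k'(0)|$ by boundedness of $\tau_k''$) closes the loop: one shows $g(x^*) = \tau_k(x^*)-\tau_k(0) \ge \tfrac12 |\tau_k'(0)|\,|x^*|$ directly from $\tau_k(x^*)-\tau_k(0) = \int_0^{x^*}(\tau_k'(0) - \int_0^t \tau_k''(s)\,ds)\,dt$ and $|x^*| \gtrsim |\tau_k'(0)|$. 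Assembling the comparability $\|\matr{Q}_k - \matr{Q}_{k-1}\| \le \sqrt{2}|x^*|$, the bound $g(x^*) \ge \tfrac12|h'_k(0)|\,|x^*|$, and $|h'_k(0)| \ge \varepsilon\|\ProjGrad{f}{\matr{Q}_{k-1}}\|$ yields $|f(\matr{Q}_k)-f(\matr{Q}_{k-1})| \ge \tfrac{1}{2}\varepsilon\|\ProjGrad{f}{\matr{Q}_{k-1}}\|\cdot\tfrac{1}{\sqrt2}\|\matr{Q}_k-\matr{Q}_{k-1}\| = \tfrac{\sqrt2\,\varepsilon}{4}\|\ProjGrad{f}{\matr{Q}_{k-1}}\|\cdot\|\matr{Q}_k-\matr{Q}_{k-1}\|$, which is exactly the claimed constant; tracking the constants through the $\|\matr{Q}_k-\matr{Q}_{k-1}\|$-versus-$|x^*|$ estimate and the integral bound is the routine-but-delicate bookkeeping that produces the factor $\sqrt2/4$.
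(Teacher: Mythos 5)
Your proposal has the right skeleton — bound $|h'_k(0)|$ from below by $\varepsilon\|\ProjGrad{f}{\matr{Q}_{k-1}}\|$ via \cref{eq:pair_selection_gradient} and \cref{eq:gradient_projection_via_h_k}, bound $\|\matr{Q}_k - \matr{Q}_{k-1}\|$ above by a constant times $|x^*|$, and then lower-bound the function increase $|\tau_k(x^*)-\tau_k(0)|$ — but the central step, establishing $|\tau_k(x^*)-\tau_k(0)| \ge \tfrac12 |h'_k(0)|\,|x^*|$, is not actually proved by the argument you give, and the mechanism you propose for it is flawed.

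Specifically, your ``honest fix'' deduces $|x^*| \ge |\tau_k'(0)|/M$ from $\tau_k'(x^*)=0$ and a bound $|\tau_k''|\le M$, and then hopes to control the second-order error in $\tau_k(x^*)-\tau_k(0) = \tau_k'(0)x^* + \tfrac12\tau_k''(\xi)(x^*)^2$. But the error term is $O(M(x^*)^2)$, and to have it dominated by $\tfrac12|\tau_k'(0)||x^*|$ you would need $M|x^*| \le |\tau_k'(0)|$, i.e.\ an \emph{upper} bound on $|x^*|$ in terms of $|\tau_k'(0)|$ — the opposite direction from what you derived. So the Taylor/MVT route does not close; it would at best give an inequality with an implicit constant depending on $M$ (itself a bound over $\SON{n}$), not the clean $\tfrac{\sqrt2\,\varepsilon}{4}$ you are trying to reach, and more importantly the inequality itself is not established.

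What actually makes the lemma work, and what your proposal misses, is an exact algebraic cancellation specific to $d\in\{2,3\}$. Since $x^*$ is a critical point, $\tau_k'(x^*)=0$, and \cref{equation-Jacobi-G-2} lets you \emph{solve} for $h''_k(0)$ in terms of $h'_k(0)$ and $x^*$: when $0<|x^*|<1$,
\begin{equation*}
h''_k(0) = \frac{h'_k(0)\,(x_*^4 - 6x_*^2 + 1)}{x_*(x_*^2 - 1)}.
\end{equation*}
Substituting this back into \cref{equation-Jacobi-G-1} collapses the expression to the exact identity
\begin{equation*}
\tau_k(x_*) - \tau_k(0) = \frac{x_*\,h'_k(0)}{2(1-x_*^2)},
\end{equation*}
from which the bound is immediate since $\tfrac{1}{1-x_*^2}\ge 1$ on $(-1,1)$, $|\tan\theta_*|\ge|\theta_*|$, and $\|\matr{Q}_k-\matr{Q}_{k-1}\| = 2\sqrt2\,|\sin(\theta_*/2)| \le \sqrt2\,|\theta_*|$. (The boundary cases $x^* \in \{0,\pm1\}$ force $h'_k(0)=0$ via \cref{equation-Jacobi-G-2}, so they are trivial.) This elimination of $h''_k(0)$ is the one idea you need and did not find; without it, the second-derivative term is genuinely uncontrolled, and no generic smoothness argument will recover the stated constant.
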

\begin{proof}
\ku At each iteration, from \cref{eq:pair_selection_gradient}  and \cref{eq:gradient_projection_via_h_k} \fin we have
\begin{equation}\label{eq:lemma-G-inequality-1}
| h^{'}_{\ku k\fin}(0)| 
\geq \varepsilon\|\ProjGrad{\textit{f}}{\matr{Q}_{k-1}}\|.
\end{equation}
Next, for an optimal angle $\theta_* \ku = \theta^*_k \fin$, we have
\begin{equation}\label{eq:lemma-G-inequality-2}
\|\matr{Q}_{k}-\matr{Q}_{k-1}\|=\|\ku \Gmat{i_k}{j_k}{\theta_*} \fin - \matr{I}_n\| = 2\sqrt{2}\left|\sin\left(\frac{\theta_{*}}{2}\right)\right |\leq\sqrt{2}|\theta_{*}|.
\end{equation}
Note that the tangent $x_* = \tan(\theta_*)$ should satisfy equation $\tau^{'}_{\ku k\fin}(x_{*})=0$.

If $x_{*}= 0$ or $\pm1$, then $h^{'}_{\ku k\fin} (0)=0$ from \cref{equation-Jacobi-G-2}, \ku hence $\|\ProjGrad{\textit{f}}{\matr{Q}_{k-1}}\| = 0$ from \cref{eq:lemma-G-inequality-1}  and \fin the result is obvious.
Consider the case $0<|x_{*}|<1$. Then from \cref{equation-Jacobi-G-2} we get
\begin{equation}\label{eq:two-derivative}
h^{''}_{\ku k\fin}(0)=\frac{h^{'}_{\ku k\fin}(0)(x_{*}^4-6x_{*}^2+1)}{x_{*}(x_{*}^2-1)},
\end{equation}
and thus
\begin{equation}\label{eq:lemma-G-inequality-3}
\tau_{\ku k\fin}(x_{*})-\tau_{\ku k\fin}(0) = \frac{x_{*}h^{'}_{\ku k\fin}(0)}{2(1-x_{*}^2)}
\end{equation}
by substituting \cref{eq:two-derivative} into (\ref{equation-Jacobi-G-1}).
Finally, by combining \crefrange{eq:lemma-G-inequality-1}{eq:lemma-G-inequality-3}, we get
\begin{align*}
&|f(\matr{Q}_{k})-f(\matr{Q}_{k-1})|=|h_{\ku k\fin}(\theta_{*})-h_{\ku k\fin}(0)|= \left|\frac{x_{*}h^{'}_{\ku k\fin}(0)}{2(1-x_{*}^2)}\right|\\
&\geq\frac{1}{2}|\tan(\theta_{*})h^{'}_{\ku k\fin}(0)|
\geq \frac{\sqrt{2}\varepsilon}{4}\|\ProjGrad{\textit{f}}{\matr{Q}_{k-1}}\|\cdot\|\matr{Q}_{k}-\matr{Q}_{k-1}\|
\end{align*}
\end{proof}

\begin{proof}[Proof of \cref{theorem-convergence-order-3}]
\cref{lemma-G-inequality} guarantees that condition \cref{condition-coro-KL} in \cref{theorem-convegence-general} holds true.
Since the cost function is analytic, by \cref{theorem-convegence-general}, the sequence $\matr{Q}_k$ converges to a stationary point $\matr{Q}_*$.
Finally, by \cref{theorem-ishteva-stationary-point}, $\matr{Q}_*$ is a stationary point of $f$ in \cref{order-3-cost-function}.
\end{proof}

\section{Jacobi-PC algorithm and \ku its global \fin convergence}\label{sec:new_algo}
The Jacobi-G algorithm has several disadvantages:
the convergence for $4$th-order tensors is currently unknown,
and the parameter $\varepsilon$ needs to be chosen in a proper way.
In this section, we propose an new Jacobi-based algorithm,
which is inspired by proximal algorithms in convex \ku \cite{Pari14:proximal} and nonconvex \cite{Bolte14:Proximal} optimization . \fin

\subsection{Jacobi-PC algorithm and its global convergence}
Suppose that we are given a \jl twice continuously differentiable \fin  function $f: \SON{n} \to \RR$, such that
\begin{equation}\label{eq:pi2_periodicity}
f(\matr{Q} \matr{G}^{(i,j,\theta)}) = f(\matr{Q} \matr{G}^{(i,j,\theta + \pi/2)})
\end{equation}
for any $\matr{Q} \in \SON{n}$ and $1\leq i<j\leq n$ (\emph{i.e.}, it is $\pi/2$-periodic along any geodesic).
Then we propose the {\it Jacobi-PC} algorithm (Jacobi-C algorithm with a proximal term) in \cref{alg:jacobi-C-P}.

\begin{algorithm}\caption{Jacobi-PC algorithm}\label{alg:jacobi-C-P}
{\bf Input:} A smooth function $f: \SON{n} \to \RR$,   and a  positive $\delta_0$, a starting value $\matr{Q}_{0}$.\newline
{\bf Output:} Sequence of iterations $\{\matr{Q}_{k}\}_{k\ge1}$.
\begin{itemize}
\item {\bf For} $k=1,2,\ldots$ until a stopping criterion is satisfied do
\item\quad Choose the pair $(i_k,j_k)$ according to the pair selection rule \eqref{equation-Jacobi-C}.
\item\quad Compute the angle $\theta^{*}_{k}$ that maximizes  the function
\[
\tilde h_{\ku k\fin}(\theta) =  \textit{f}(\matr{Q}_{k-1}\Gmat{i_k}{j_k}{\theta})-\delta_0 \gamma(\theta),
\]
where
\[
\gamma(\theta) = 2\sin^2(\theta)\cos^2(\theta).
\]
\item\quad  Set $\matr{U}_k \eqdef \Gmat{i_k}{j_k}{\theta^{*}_k}$, and update $\matr{Q}_k = \matr{Q}_{k-1} \matr{U}_k$.
\item {\bf End for}
\end{itemize}
\end{algorithm}
\begin{remark}
The periodicity condition \cref{eq:pi2_periodicity} is not necessary for the global convergence of the algorithm in \cref{th:convergence-jacobi-C},
but we add it due to its presence in the orthogonal tensor diagonalization problem.
If the condition \cref{eq:pi2_periodicity} does not hold, another proximal term $\gamma(\theta)$ may be needed.
Finally, other pair selection {\color{blue}rules} than \cref{equation-Jacobi-C} can be used.
\end{remark}

\begin{theorem}\label{th:convergence-jacobi-C}
The sequence produced by
\cref{alg:jacobi-C-P} converges to a stationary point $\matr{Q}_*\in\ON{n}$
for any starting point $\matr{Q}_{0}$.
\end{theorem}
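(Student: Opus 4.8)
The strategy is to verify the two hypotheses of \cref{theorem-convegence-general} (equivalently, of \cref{theorem-SU15} applied to $\ON{n}$) for the sequence produced by \cref{alg:jacobi-C-P}, and then invoke \cref{theorem-ishteva-stationary-point}-style reasoning to conclude that the limit is stationary. The proximal term $\delta_0\gamma(\theta)$, with $\gamma(\theta)=2\sin^2\theta\cos^2\theta$, is designed precisely so that the sufficient-decrease inequality \cref{condition-coro-KL} comes for free: since $\theta^*_k$ maximizes $\tilde h_k(\theta)=h_k(\theta)-\delta_0\gamma(\theta)$, we have $\tilde h_k(\theta^*_k)\ge\tilde h_k(0)$, and because $\gamma(0)=0$ this gives $f(\matr Q_k)-f(\matr Q_{k-1})=h_k(\theta^*_k)-h_k(0)\ge\delta_0\gamma(\theta^*_k)$. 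Thus the cost is nondecreasing, and by compactness of $\ON{n}$ it converges; summing the increments shows $\sum_k\gamma(\theta^*_k)<\infty$, hence $\gamma(\theta^*_k)\to0$ and (using $\theta^*_k\in[-\pi/4,\pi/4]$ and \cref{rem:maximizer_choice}) $\theta^*_k\to0$, so $\|\matr Q_k-\matr Q_{k-1}\|\to0$.

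\textbf{Key steps.} First I would establish the two-sided control relating $\gamma(\theta^*_k)$ to $|h'_k(0)|$ and to $\|\matr Q_k-\matr Q_{k-1}\|$. On one hand, $\|\matr Q_k-\matr Q_{k-1}\|=\|\Gmat{i_k}{j_k}{\theta^*_k}-\matr I_n\|=2\sqrt2|\sin(\theta^*_k/2)|$, which is comparable to $|\theta^*_k|$ on $[-\pi/4,\pi/4]$, while $\gamma(\theta^*_k)=\tfrac12\sin^2(2\theta^*_k)$ is comparable to $(\theta^*_k)^2$; so $\gamma(\theta^*_k)\gtrsim\|\matr Q_k-\matr Q_{k-1}\|^2$ up to a constant. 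On the other hand, I need a lower bound on the achieved decrease in terms of the projected gradient. Since $\theta^*_k$ maximizes $\tilde h_k$, by a Taylor/mean-value argument along $\theta$ the first-order optimality and smoothness of $h_k$ (with a uniform bound $L$ on $|h''_k|$ over the compact group, independent of $k$) yield $\tilde h_k(\theta^*_k)-\tilde h_k(0)\gtrsim |h'_k(0)|^2/(L+\delta_0)$ — i.e. a gradient-proportional decrease as in a backtracking/proximal step. Combining with $\gamma(\theta^*_k)\gtrsim\|\matr Q_k-\matr Q_{k-1}\|^2$ and $|h'_k(0)|\ge c\,\|\matr Q_k-\matr Q_{k-1}\|$ when $\theta^*_k\neq0$ (from the relation between $h'_k(0)$, $h''_k(0)$ and the location of the maximizer, together with $\theta^*_k\to0$ so that the quadratic term dominates), one obtains $|f(\matr Q_k)-f(\matr Q_{k-1})|\ge\sigma\,|h'_k(0)|\,\|\matr Q_k-\matr Q_{k-1}\|$ for large $k$; and recalling $|h'_k(0)|=\|\ProjGrad{f}{\matr Q_{k-1}}\|$-equivalent up to the pair-selection (here one must be careful, since Jacobi-C picks cyclic pairs, not the gradient-maximal one — see below), this is hypothesis (i). Hypothesis (ii) is immediate: if $\ProjGrad{f}{\matr Q_{k-1}}=0$ then $h'_k(0)=0$ for the chosen pair, and since $0$ is then a critical point of the smooth $\tilde h_k$ with $\tilde h_k''(0)=h''_k(0)-4\delta_0<h''_k(0)$, for $\delta_0$ large enough $\theta=0$ is the maximizer, giving $\matr Q_k=\matr Q_{k-1}$. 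Then \cref{theorem-convegence-general} gives convergence $\matr Q_k\to\matr Q_*$, and a separate argument over a full sweep shows $\ProjGrad{f}{\matr Q_*}=0$.

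\textbf{Main obstacle.} The delicate point is hypothesis (i): because \cref{alg:jacobi-C-P} uses the \emph{cyclic} pair order \cref{equation-Jacobi-C} rather than the gradient-based rule \cref{eq:pair_selection_gradient}, at a single iteration $|h'_k(0)|$ can be much smaller than $\|\ProjGrad{f}{\matr Q_{k-1}}\|$, so \cref{condition-coro-KL} cannot hold iteration-by-iteration. The remedy is to argue over one full sweep of $n(n-1)/2$ iterations: within a sweep every pair $(i,j)$ is visited, so $\max_{(i,j)}|\Lambda_{i,j}|$ over the sweep controls $\|\matr\Lambda\|$ up to the factor $n$; combined with the continuity of $\ProjGrad f$ and the fact that consecutive iterates within a sweep are $o(1)$ apart (since $\gamma(\theta^*_k)\to0$ summably), one gets a sweep-aggregated inequality of the form $|f(\matr Q_{k+N})-f(\matr Q_k)|\ge\sigma\,\|\ProjGrad{f}{\matr Q_k}\|\,\|\matr Q_{k+N}-\matr Q_k\|$ with $N=n(n-1)/2$, which is enough to run the \L{}ojasiewicz argument on the subsequence $\{\matr Q_{kN}\}$ and then extend to the whole sequence because the intra-sweep increments are summable. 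I would also need to check that $\delta_0$ can be fixed once and for all (independently of the data) so that $\theta=0$ is selected whenever $h'_k(0)=0$, using the uniform bound on $h''_k$; this requires $\delta_0>\tfrac14\sup_k\sup_\theta h''_k(\theta)$, finite by compactness and smoothness — this is where twice-continuous differentiability of $f$ is used.
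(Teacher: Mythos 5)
Your proposal takes a genuinely different route from the paper's, and the route you choose runs into difficulties the paper avoids by not using the \L{}ojasiewicz machinery at all for this theorem.

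The paper's proof is direct and does not invoke \cref{theorem-convegence-general} or \cref{theorem-SU15}. It proceeds in three elementary steps: (1) the proximal term gives $f(\matr{Q}_k)-f(\matr{Q}_{k-1})\ge\delta_0\gamma(\theta^*_k)\ge 0$, so $f(\matr{Q}_k)$ is monotone and bounded, hence $\gamma(\theta^*_k)\to 0$ and $\theta^*_k\to 0$, from which the sequence $\{\matr{Q}_k\}$ is claimed to converge to some $\matr{Q}_*$; (2) since $\theta^*_k$ is a critical point of $\tilde h_k$ and $\tilde h_k''$ is uniformly bounded (by $C^2$-smoothness and compactness of $\ON{n}\times\RR/(\tfrac{\pi}{2}\ZZ)$), a mean-value estimate gives $|\tilde h'_k(0)|\le M_1|\theta^*_k|\to 0$, i.e.\ $\Lambda_{i_k,j_k}(\matr{Q}_{k-1})\to 0$; (3) because $\theta^*_k\to 0$ and each $\Lambda_{i,j}(\matr{Q}\matr{G}^{(i',j',\theta)})$ is uniformly Lipschitz in $\theta$, the smallness of the active entry propagates to all other entries across a sweep, giving $\matr{\Lambda}(\matr{Q}_{k-1})\to 0$ and thus $\ProjGrad{f}{\matr{Q}_*}=0$. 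No sufficient-decrease inequality of the form \cref{condition-coro-KL} is ever established, because the cyclic pair order makes it false iteration-by-iteration.

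Your plan recognizes this obstruction, but the proposed remedies do not close the gap. First, \cref{theorem-SU15}/\cref{theorem-convegence-general} are stated for the sequence itself, not for a subsampled sequence: proving a sweep-aggregated inequality $|f(\matr{Q}_{k+N})-f(\matr{Q}_k)|\ge\sigma\|\ProjGrad{f}{\matr{Q}_k}\|\,\|\matr{Q}_{k+N}-\matr{Q}_k\|$ for $N=n(n-1)/2$ would at best give convergence of the subsequence $\{\matr{Q}_{kN}\}$ via a re-proved, subsequence-adapted version of the theorem, and one would still have to show the intra-sweep increments force the full sequence to converge; moreover the claimed inequality itself is not justified, since $\|\matr{Q}_{k+N}-\matr{Q}_k\|$ can be small even when several intermediate steps are not, so $\|\ProjGrad{f}{\matr{Q}_k}\|\,\|\matr{Q}_{k+N}-\matr{Q}_k\|$ does not obviously lower-bound the sweep decrease. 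Second, your verification of hypothesis (ii) requires $\delta_0$ to exceed a quantity involving $\sup_k\sup_\theta|h_k''(\theta)|$. But \cref{th:convergence-jacobi-C} is stated for any $\delta_0>0$, and the paper's proof never needs $\theta^*_k=0$ when $\ProjGrad{f}{\matr{Q}_{k-1}}=0$; indeed when $h'_k(0)=0$ but $\delta_0$ is small, the algorithm can legitimately take a nonzero step. So hypothesis (ii) can genuinely fail in the regime the theorem covers, and the \L{}ojasiewicz corollary is simply not the right tool here.

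The insight you are missing is that the proximal penalty makes the \L{}ojasiewicz machinery unnecessary: $\gamma(\theta^*_k)\to 0$ already controls the step sizes, and stationarity of the limit follows from two uniform Lipschitz estimates (on $\tilde h_k'$ in $\theta$, and on each $\Lambda_{i,j}$ along rotations) combined with the cyclic visitation of all pairs within a sweep — exactly as in the paper's steps (2)--(3) above.
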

\begin{proof}
We first prove the convergence.
Since
\[
f(\matr{Q}_k)  - f(\matr{Q}_{k-1}) -\delta_0 \gamma(\theta^*_k) =\tilde h_{\ku k\fin}(\theta^*_k) - \tilde h_{\ku k\fin}(0) \ge 0,
\]
we get that
\begin{equation}\label{eq:proximal_inequality}
f(\matr{Q}_k) - f(\matr{Q}_{k-1}) \ge \delta_0 \gamma(\theta^*_k) \ge 0.
\end{equation}
Note that $f(\matr{Q}_{k})$ is bounded since $\ON{n}$ is compact.
Then $f(\matr{Q}_{k})\rightarrow c<+\infty$ and thus
$$f(\matr{Q}_k) - f(\matr{Q}_{k-1}) \rightarrow  0.$$
By \cref{eq:proximal_inequality},
we have that
$\gamma(\theta^*_k) \rightarrow 0.$
Note that $\gamma(\theta) \ge 8|\theta|^2/\pi^2$ for $\theta \in [-\pi/4, \pi/4]$.
Then $\theta^*_k \to 0$ and thus there exists $\matr{Q}_*\in\ON{n}$ such that $\matr{Q}_k \to \matr{Q}_*$.

Next we prove that $\tilde{h}^{'}_{\ku k\fin}(0)\rightarrow 0$,
that is $\ku\Lambda_{i_k,j_k}\fin(\matr{Q}_{k-1})\rightarrow 0$.
Define
\[
\bar{h}(\theta\ku, \matr{Q}\fin) =  \textit{f}(\matr{Q}\Gmat{i}{j}{\theta})-\delta_0 \gamma(\theta)
\]
for $\theta\in\RR$, $\matr{Q}\in \ON{n}$ and $1\leq i<j\leq n$.
Let
\begin{align*}
M_1\eqdef\max\limits_{\substack{\matr{Q}\in \ON{n},\theta\in\RR,\\1\leq i<j\leq n}} \left| \ku\frac{\partial^2\bar{h}}{(\partial \theta)^2} \fin (\theta\ku, \matr{Q}\fin) \right|.
\end{align*}
Then $M_1<+\infty$ since \ku $f$ is C$^2$ \fin smooth, $\bar{h}$ is periodic \ku with respect to $\theta$ and $\ON{n}$ is compact.  \fin
Therefore, we have that
\[
|\tilde{h}^{'}_{\ku k\fin}(0)|=|\tilde{h}^{'}_{\ku k\fin}(\theta^{*}_{k})-\tilde{h}^{'}_{\ku k\fin}(0)|\leq|\theta^{*}_{k}|M_1
\]
for any $\matr{Q}_{k-1}\in \ON{n}$, \ku $\theta^{*}_{k} \in \RR$ \fin  and $1 \leq i_k < j_k \leq n$,
and thus $\tilde{h}^{'}_{\ku k\fin}(0)\rightarrow 0$.

Finally we prove that $\matr{Q}_*\in\ON{n}$ is a stationary point of $\textit{f}$,
that is $\matr{\Lambda}(\matr{Q}_{k-1})\rightarrow 0$.
We have proved that $\ku\Lambda_{i_k,j_k}\fin(\matr{Q}_{k-1})\rightarrow 0$ {in the above part}.
Now we prove other entries of $\matr{\Lambda}(\matr{Q}_{k-1})$ also converge to 0.
For simplicity,
take $(i_k,j_k)=(1,2)$ and $(i_{k+1},j_{k+1})=(1,3)$ for instance.
It is enough to prove that $\ku\Lambda_{1,2}\fin(\matr{Q}_{k})\rightarrow 0$.
In fact,
if we define
\begin{align*}
\phi: \RR\rightarrow\RR,\ \theta\mapsto \ku\Lambda_{1,2}\fin(\matr{Q}_{k-1}\matr{G}(\theta)),
\end{align*}
then $\ku\Lambda_{1,2}\fin(\matr{Q}_{k}) = \phi(\theta^{*}_{k})$ and $\ku\Lambda_{1,2}\fin(\matr{Q}_{k-1}) = \phi(0)$.
Define
\[
\bar{\phi}(\theta) =  \ku\Lambda_{1,2}\fin(\matr{Q}\Gmat{i}{j}{\theta})
\]
for $\theta\in\RR$, $\matr{Q}\in \ON{n}$ and $1\leq i<j\leq n$.
Let
\begin{align*}
M_2\eqdef\max\limits_{\substack{\matr{Q}\in \ON{n},\theta\in\RR,\\1\leq i<j\leq n}}|\bar{\phi}^{'}(\theta)|.
\end{align*}
Then $M_2<+\infty$ since $\bar{\phi}$ is smooth and periodic.
Therefore
\begin{align*}
|\ku\Lambda_{1,2}\fin(\matr{Q}_{k})-\ku\Lambda_{1,2}\fin(\matr{Q}_{k-1})|=|\phi(\theta^{*}_{k})-\phi(0)|\leq|\theta^{*}_{k}|M_2,
\end{align*}
and thus $\ku\Lambda_{1,2}\fin(\matr{Q}_{k})\rightarrow 0$.
\end{proof}

\subsection{Elementary rotations for orthogonal tensor diagonalization}\label{sec:algebraic_proximal} 
\ku
The cost function \cref{eq:sim_diag_cost}  in  simultaneous orthogonal tensor diagonalizatiom has the property \cref{eq:pi2_periodicity}, hence \fin
the Jacobi-PC  algorithm is guaranteed to converge. \ku
Moreover, it allows for finding \fin the update using an algebraic algorithm  \ku in the cases $d=3,4$. \fin

\ku Let us show how to find \fin $\theta^{*}_{k}$ in every iteration of \cref{alg:jacobi-C-P}.
Let $\widetilde{\tau}_{\ku k\fin}(x) = \widetilde{h}_{\ku k\fin}(\arctan x )$ be as in \cref{sec:cost_function_iteration}. \ku
Then we obtain that \fin
\[
\widetilde{\tau}_{\ku k\fin}(x) = \frac{\rho(x)}{(1+x^2)^d}-2\delta_{0}\frac{x^2}{(1+x^2)^2},
\]
where $\rho(x)$ is the polynomial defined in  \cref{eq:tau_rational}.
Then $\widetilde{\tau}^{'}_{\ku k\fin}(x)=0$ is equivalent to
$\omega(x)=0$,
where
$$\omega(x)=(1+x^2)^{d+1}\widetilde{\tau}^{'}_{\ku k\fin}(x)=\rho^{'}(x)(1+x^2)-2dx\rho(x)-4\delta_{0}x(1-x^2)(1+x^2)^{d-2}$$
is a polynomial of degree $2d$.

Note that from \ku \cref{sec:cost_function_iteration} and $\frac{\pi}{2}$-periodicity of $\gamma$, we have that
$\widetilde{h}_{\ku k\fin}(\theta)=\widetilde{h}_{\ku k\fin}(\theta+\pi/2)$ \fin for any $\theta\in\RR$, \ku hence \fin $\widetilde{\tau}_{\ku k\fin}(x)=\widetilde{\tau}_{\ku k\fin}(-1/x)$.
Now we represent the algebraic solutions of $\omega(x)$ by this property,
that is,
$\omega(x)=0$ has the same solutions as $\omega(-1/x)=0$ except the possible roots at the origin.
Let $\xi=x-1/x$.
Then
\begin{align*}
\omega(x)&=x^{2(d-d_1-d_2)}(1+x^2)^{d_2}\prod\limits_{{\ku j\fin}=1}^{d_1}(x-x_{\ku j\fin})(x+1/x_{\ku j\fin})\\
&=x^{2(d-d_1-d_2)}(1+x^2)^{d_2}\prod\limits_{{\ku j\fin}=1}^{d_1}(x^2-\xi_{\ku j\fin}x-1)=x^{2d-d_1-2d_2}(1+x^2)^{d_2}\prod\limits_{{\ku j\fin}=1}^{d_1}(\xi-\xi_{\ku j\fin})
\end{align*}
for some $0\leq d_1,d_2\leq d$.
Now we have that $\omega(x)=0$ if and only if
$$\Omega(\xi)=\prod\limits_{{\ku j\fin}=1}^{d_1}(\xi-\xi_{\ku j\fin})=0,$$
except the possible roots at the origin.
If the algebraic roots $\xi_{\ku j\fin}$ can be calculated
then the roots
$(x_{\ku j\fin},-1/x_{\ku j\fin})$ could be deduced by rooting the polynomials
$x^2-\xi_{\ku j\fin}x-1=0$.

\ku Now we restrict ourselves to the case of a single tensor (i.e. the cost function \cref{eq:cost-function-diagonalization}). \fin
In fact,
if $\tens{A}$ is of 3rd or 4th-order,
it \ku can \fin be shown that $\Omega(\xi)$ has algebraic solutions and thus
$(x_{\ku j\fin},-1/x_{\ku j\fin})$ \ku can \fin be determined. \ku
The following \cref{rem:coefficient-Jacobi-C} provides the specific form of $\Omega(\xi)$  in these cases, and is a direct generalisation of the results the ordinary Jacobi algorithm in \cite[Appendix]{Como94:ifac}. \fin

\begin{lemma}\label{rem:coefficient-Jacobi-C}
(i) Let $\tens{A}\in\RR^{n\times n\times n}$ be a 3rd order symmetric tensor
and
\begin{align*}
a &= 6(\tenselem{W}_{111}\tenselem{W}_{112}-\tenselem{W}_{122}\tenselem{W}_{222});\\
b &= 6(\tenselem{W}_{111}^2+\tenselem{W}_{222}^2-3\tenselem{W}_{112}^2-3\tenselem{W}_{122}^2
-2\tenselem{W}_{111}\tenselem{W}_{122}-2\tenselem{W}_{112}\tenselem{W}_{222})+4\delta_{0}.
\end{align*}
Then
\begin{align*}
\omega(x) &= a(1-5x^2-5x^4+x^6)+b(x^5-x) = x^2(1+x^2)[a\xi^2+b\xi-4a];\\
\Omega(\xi) &= a\xi^2+b\xi-4a.
\end{align*}\\
(ii) Let $\tens{A}\in\RR^{n\times n\times n\times n}$ be a 4th order symmetric tensor
and
\begin{align*}
a &= 8(\tenselem{W}_{1111}\tenselem{W}_{1112}-\tenselem{W}_{1222}\tenselem{W}_{2222});\\
b &=8(\tenselem{W}_{1111}^2-3\tenselem{W}_{1122}\tenselem{W}_{1111}-4\tenselem{W}_{1112}^2
-4\tenselem{W}_{1222}^2+\tenselem{W}_{2222}^2-3\tenselem{W}_{1122}\tenselem{W}_{2222})\!+\!4\delta_{0};\\
c &= 8(18\tenselem{W}_{1112}\tenselem{W}_{1122}-7\tenselem{W}_{1111}\tenselem{W}_{1112}+3\tenselem{W}_{1111}\tenselem{W}_{1222}\\
&-18\tenselem{W}_{1122}\tenselem{W}_{1222}-3\tenselem{W}_{1112}\tenselem{W}_{2222}+7\tenselem{W}_{1222}\tenselem{W}_{2222});\\
d &= 8(9\tenselem{W}_{1111}\tenselem{W}_{1122}-32\tenselem{W}_{1112}\tenselem{W}_{1222}-2\tenselem{W}_{1111}\tenselem{W}_{2222}\\
&+9\tenselem{W}_{1122}\tenselem{W}_{2222}+12\tenselem{W}_{1112}^2-36\tenselem{W}_{1122}^2+12\tenselem{W}_{1222}^2)+4\delta_{0};\\
e &= 80(6\tenselem{W}_{1122}\tenselem{W}_{1222}-\tenselem{W}_{1111}\tenselem{W}_{1222}-6\tenselem{W}_{1112}\tenselem{W}_{1122}+\tenselem{W}_{1112}\tenselem{W}_{2222}).
\end{align*}
Then
\begin{align*}
\omega(x) &= a(x^8+1)+b(x^7-x)+c(x^6+x^2)+d(x^5-x^3)+ex^4 \\
&= x^4[a(x^4+\frac{1}{x^4})+b(x^3-\frac{1}{x^3})+c(x^2+\frac{1}{x^2})+d(x-\frac{1}{x})+e]\\
&= x^4(a\xi^4+b\xi^3+(4a + c)\xi^2+(3b + d)\xi+2a+2c+e);\\
\Omega(\xi) &= a\xi^4+b\xi^3+(4a + c)\xi^2+(3b + d)\xi+2a+2c+e.
\end{align*}
\end{lemma}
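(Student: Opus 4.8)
The plan is to compute $\omega(x)$ directly from its definition and then rewrite it in the variable $\xi = x - 1/x$. Recall that here $\tens{A}$ is a single symmetric tensor, so $h_k(\theta) = \tenselem{T}_{1\ldots 1}(\theta)^2 + \tenselem{T}_{2\ldots 2}(\theta)^2 + \mathrm{const}$ (the Givens rotation $\Gmat{1}{2}{\theta}$ leaves all other diagonal entries of $\tens{T}(\theta)$ unchanged), and hence $\tau_k(x) = h_k(\arctan x) = \rho(x)/(1+x^2)^d$ with $\deg\rho \le 2d$. Since $\frac{d}{dx}\bigl(2\delta_0 x^2/(1+x^2)^2\bigr) = 4\delta_0 x(1-x^2)/(1+x^2)^3$, the formula for $\omega(x)$ established above splits as
$$\omega(x) = \omega_0(x) - 4\delta_0\, x(1-x^2)(1+x^2)^{d-2},$$
where $\omega_0(x) := \rho'(x)(1+x^2) - 2dx\rho(x) = (1+x^2)^{d+1}\tau'_k(x)$ is exactly the polynomial arising in the ordinary Jacobi CoM algorithm, given explicitly for $d\in\{3,4\}$ in \cite[Appendix]{Como94:ifac}. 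Moreover, from $\widetilde{\tau}_k(x) = \widetilde{\tau}_k(-1/x)$ one gets $\omega(x) = x^{2d}\omega(-1/x)$, so $\omega$ lies in a low-dimensional space of polynomials; the task thus reduces to expanding $\omega_0$, adding the elementary proximal contribution, and passing to $\xi$.

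For part (i) ($d=3$) I would start from the already-established identity \cref{equation-Jacobi-G-2}, which gives $\tau'_k(x) = \bigl(h'_k(0)(1-6x^2+x^4) + h''_k(0)(x-x^3)\bigr)/(1+x^2)^3$, hence $\omega_0(x) = (1+x^2)\bigl(h'_k(0)(1-6x^2+x^4) + h''_k(0)(x-x^3)\bigr)$. Substituting $h'_k(0)$ and $h''_k(0)$ from \cref{lem:deriv_order_3} (with $\tens{T}$ replaced by $\tens{W}$) and using the notation of the statement, so that $a = h'_k(0)$ and $h''_k(0) = 4\delta_0 - b$, straightforward algebra gives $\omega(x) = (1+x^2)\bigl(a(1-6x^2+x^4) + b(x^3-x)\bigr) = a(1-5x^2-5x^4+x^6) + b(x^5-x)$. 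Finally, using $\xi^2 = x^2 - 2 + x^{-2}$ one checks that $x^2(1+x^2)(a\xi^2 + b\xi - 4a)$ expands to the same polynomial, which gives $\omega(x) = x^2(1+x^2)\Omega(\xi)$ with $\Omega(\xi) = a\xi^2 + b\xi - 4a$.

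For part (ii) ($d=4$) the scheme is identical but needs more work upstream: I would first derive the $4$th-order analogue of \cref{lem:deriv_order_3} by differentiating $\tenselem{T}_{1111},\tenselem{T}_{1112},\tenselem{T}_{1122},\tenselem{T}_{1222},\tenselem{T}_{2222}$ along the geodesic exactly as in \cref{sec:deriv_2_3} (via the mode products with $\matr{G}'(\theta)$), which exhibits $\tenselem{T}_{1111}(\theta)$ and $\tenselem{T}_{2222}(\theta)$ as degree-$4$ trigonometric polynomials and thereby determines $\rho(x)$ and $\omega_0(x)$. The relation $\omega(x) = x^8\omega(-1/x)$ forces $\omega$ into the span of $x^8+1$, $x^7-x$, $x^6+x^2$, $x^5-x^3$, and $x^4$; collecting the coefficients of $\omega_0$ in this basis produces $a,c,e$ and the $\delta_0$-free parts of $b,d$. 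Adding the proximal term, which for $d=4$ equals $-4\delta_0 x(1-x^2)(1+x^2)^2 = 4\delta_0(x^7 + x^5 - x^3 - x)$, shifts both $b$ and $d$ by $4\delta_0$ and hence yields exactly the $a,\dots,e$ of the statement. Dividing $\omega(x)$ by $x^4$ and substituting $x^2 + x^{-2} = \xi^2 + 2$, $x^3 - x^{-3} = \xi^3 + 3\xi$, $x^4 + x^{-4} = \xi^4 + 4\xi^2 + 2$ then reads off $\Omega(\xi) = a\xi^4 + b\xi^3 + (4a+c)\xi^2 + (3b+d)\xi + (2a+2c+e)$.

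The main obstacle is purely the computation in (ii): expanding $\rho(x)$ and collecting the coefficients of $\omega_0(x)$ — quadratic expressions in the six entries $\tenselem{W}_{1111},\dots,\tenselem{W}_{2222}$ — is lengthy and error-prone, though conceptually routine. One either carries it out by hand, following \cite[Appendix]{Como94:ifac} for the $\delta_0 = 0$ part, or verifies it with computer algebra, and then simply superimposes the elementary proximal contribution derived above.
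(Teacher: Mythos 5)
Your proposal is correct and is essentially the argument the paper leaves implicit: the paper offers no proof of \cref{rem:coefficient-Jacobi-C}, simply stating that it is ``a direct generalisation of the results ... in \cite[Appendix]{Como94:ifac}'' and remarking afterward that setting $\delta_0=0$ recovers those expressions. Your decomposition $\omega(x)=\omega_0(x)-4\delta_0\,x(1-x^2)(1+x^2)^{d-2}$, the derivation of the $d=3$ case from \cref{equation-Jacobi-G-2} and \cref{lem:deriv_order_3} (with the correct identifications $a=h'_k(0)$, $h''_k(0)=4\delta_0-b$), the observation that the $d=4$ proximal term contributes $4\delta_0(x^7+x^5-x^3-x)$ and hence shifts exactly $b$ and $d$, and the $\xi$-substitutions $x^2+x^{-2}=\xi^2+2$, $x^3-x^{-3}=\xi^3+3\xi$, $x^4+x^{-4}=\xi^4+4\xi^2+2$ all check out; the remaining $d=4$ expansion of $\omega_0$ is a routine (if lengthy) computation deferred to \cite[Appendix]{Como94:ifac}, exactly as the paper does.
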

\ku 
Note that if we set $\delta_0 = 0$, we obtain exactly the expressions from \cite[Appendix]{Como94:ifac}.
\fin

\begin{remark}
The expressions for  $\Omega(\xi)$  in the case of simultaneous orthogonal diagonalization problem can be also easily found in the same way as in \cref{rem:coefficient-Jacobi-C}, by exploiting the additivity of the corresponding expressions in \cref{sec:deriv_general}.
\end{remark}
\fin

\section{Numerical results}\label{sec:experiments}
In this section, we present numerical experiments in order to
compare the presented algorithms in the case of {\jl orthogonal diagonalization problems for  symmetric tensors. \fin}
The algorithms were implemented in MATLAB and the codes are available on request.

The setup of all the experiments is as follows:
\begin{itemize}
\item A diagonal tensor $\tens{D}$ is chosen. (For convenience, we choose the tensors such that $\|\tens{D}\| = 1$.)
\item A random rotation matrix $\matr{Q}$ is applied to obtain
\[
\tens{A}_0 = \tens{D} \contr{1} \matr{Q}^{\T}\cdots\contr{d}\matr{Q}^{\T}.
\]
\item The test tensor is constructed as $\tens{A} = \tens{A}_0 + \tens{E}$,
where $\tens{E}$ is the symmetrization of a tensor containing realization of i.i.d. Gaussian noise with variance $\sigma^2$.
\end{itemize}
To each test example we apply the following algorithms:
\begin{itemize}
\item \emph{Jacobi-C}: \cref{alg:jacobi} with the order of pairs \cref{equation-Jacobi-C}.
\item \emph{Jacobi-G-max}: \cref{alg:jacobi-G-max}.
\item \emph{Jacobi-G}: \cref{alg:jacobi-G} for various values of $\varepsilon$.
\item \emph{Jacobi-PC}: \cref{alg:jacobi-C-P} for various values of $\delta_0$ (shortened to ``Jacobi-P'' in the plots).
\end{itemize}
The stopping criterion is chosen to be the maximum number of iteration.

In each of the plots, we plot $\|\tens{A}\|^2 - f(\matr{Q}_k)$, which is exactly the squared norm of the off-diagonal elements.
In all the plots, the markers correspond to the places where the new sweep starts.

\subsection{Test 1: equal values on the diagonal}\label{sec:test-1}
In this subsection, we consider $10\times 10\times 10$ and $10\times 10 \times 10\times 10$ tensors where the diagonal values are given by
\[
\tenselem{D}_{i\ldots i} = \frac{1}{\sqrt{10}}.
\]
We plot the results in \cref{fig:test1_sig4,fig:test1_sig2}.
\begin{figure}[tbhp]
\centering
\subfloat[$3$-rd order]{\includegraphics[width=0.5\textwidth]{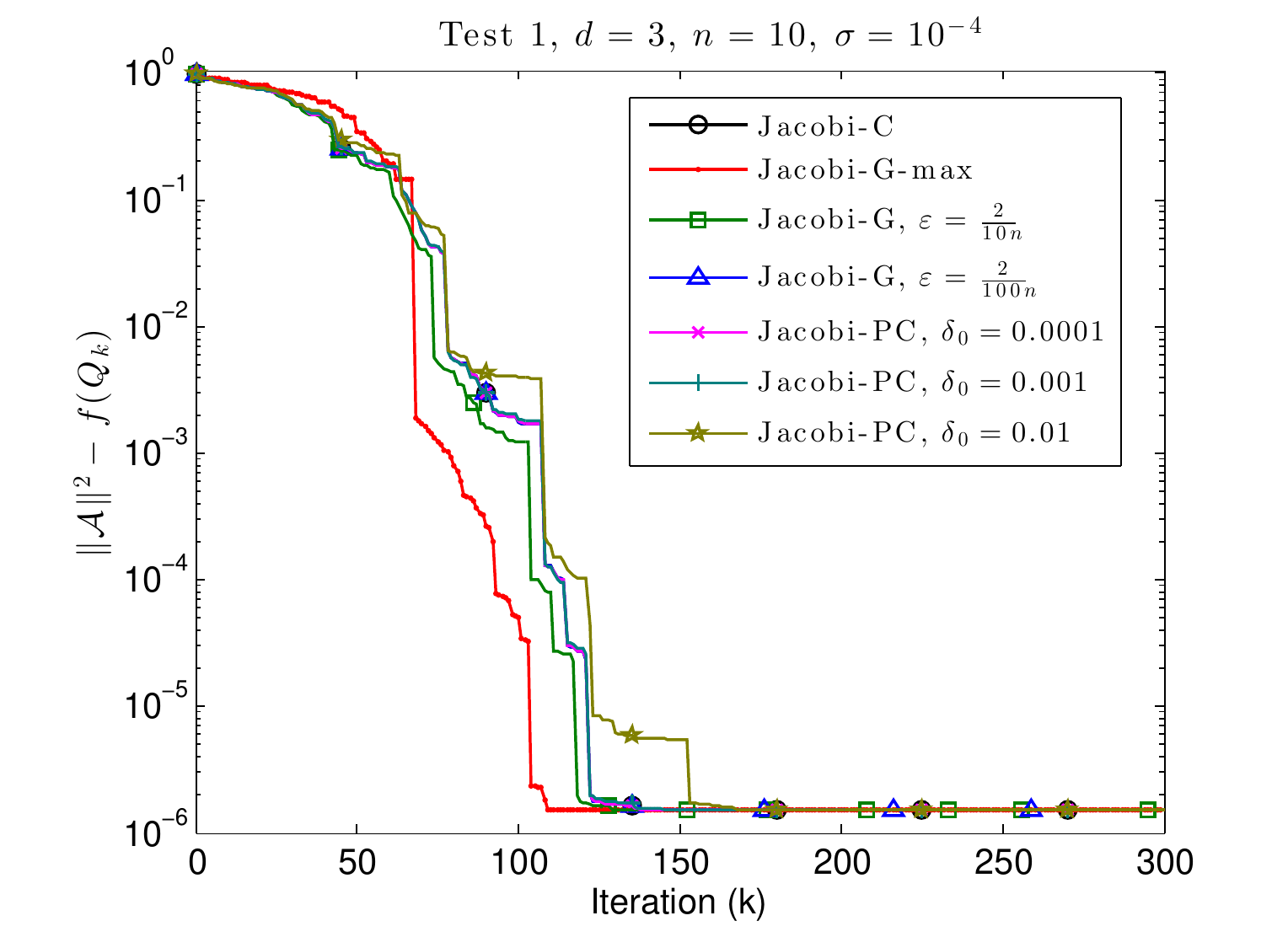}}\!\!\!
\subfloat[$4$-th order]{\includegraphics[width=0.5\textwidth]{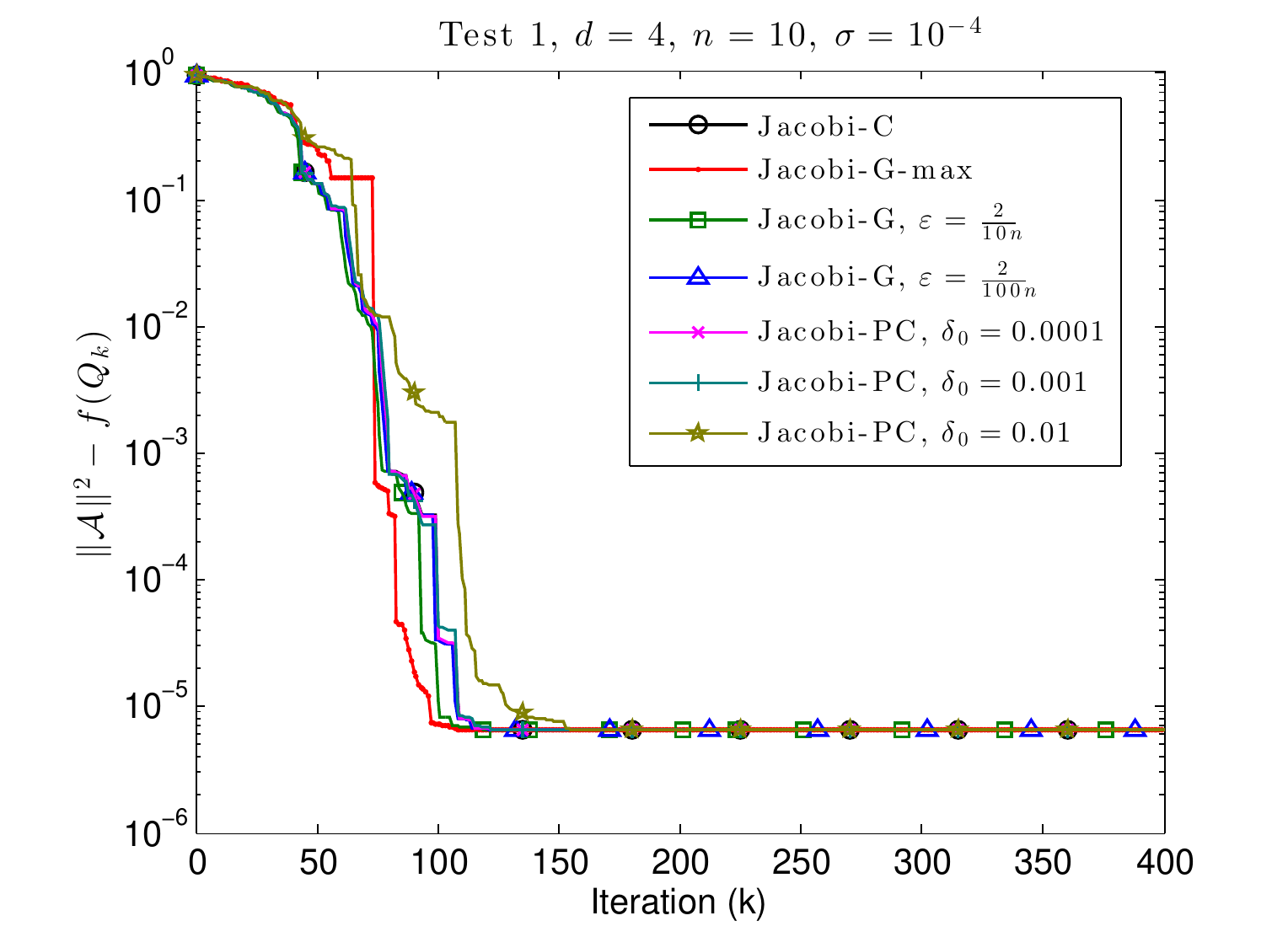}}
\caption{Equal values on the diagonal, small noise.}
\label{fig:test1_sig4}
\end{figure}
\begin{figure}[tbhp]
\centering
\subfloat[$3$-rd order]{\includegraphics[width=0.5\textwidth]{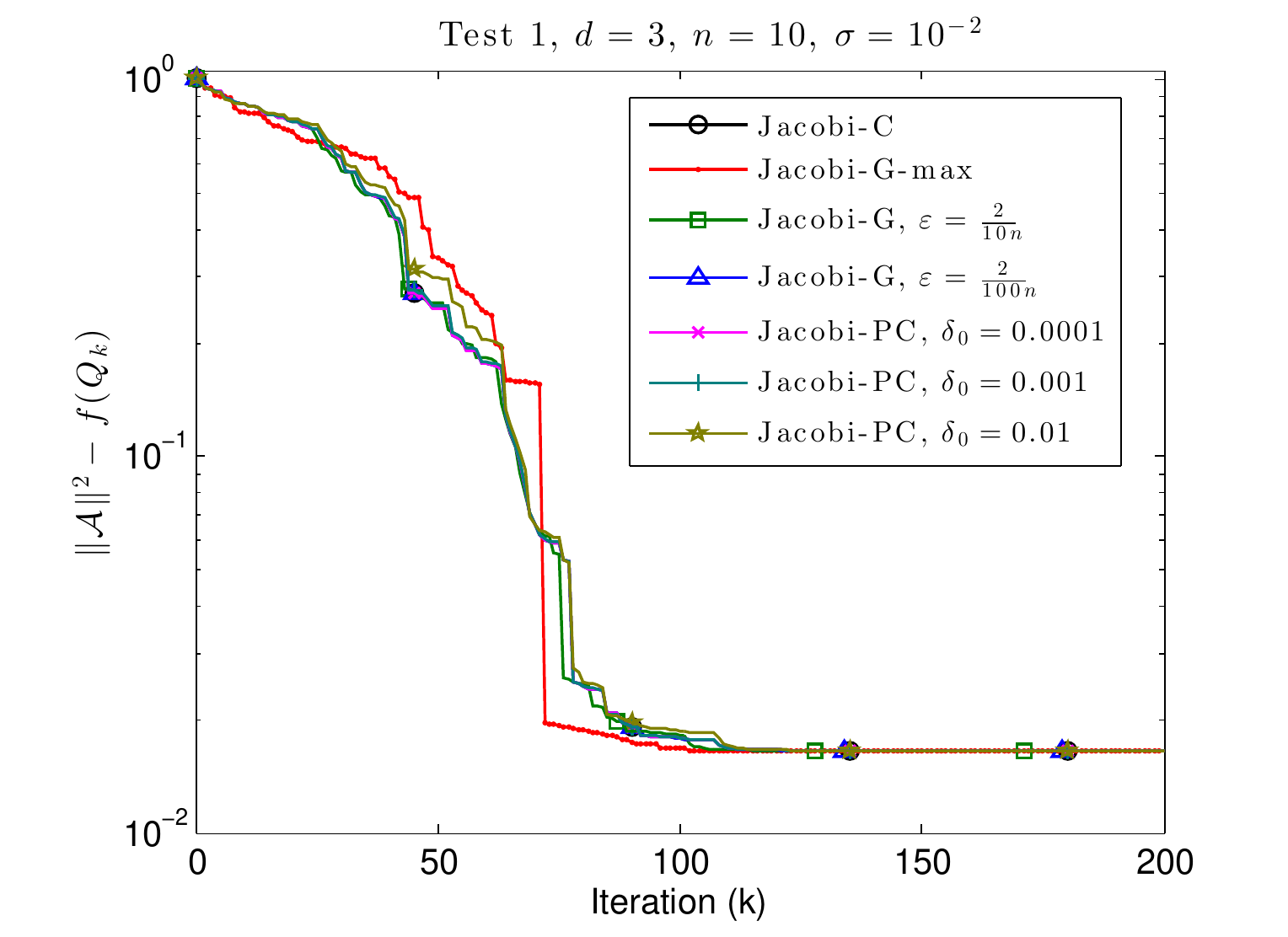}}\!\!\!
\subfloat[$4$-th order]{\includegraphics[width=0.5\textwidth]{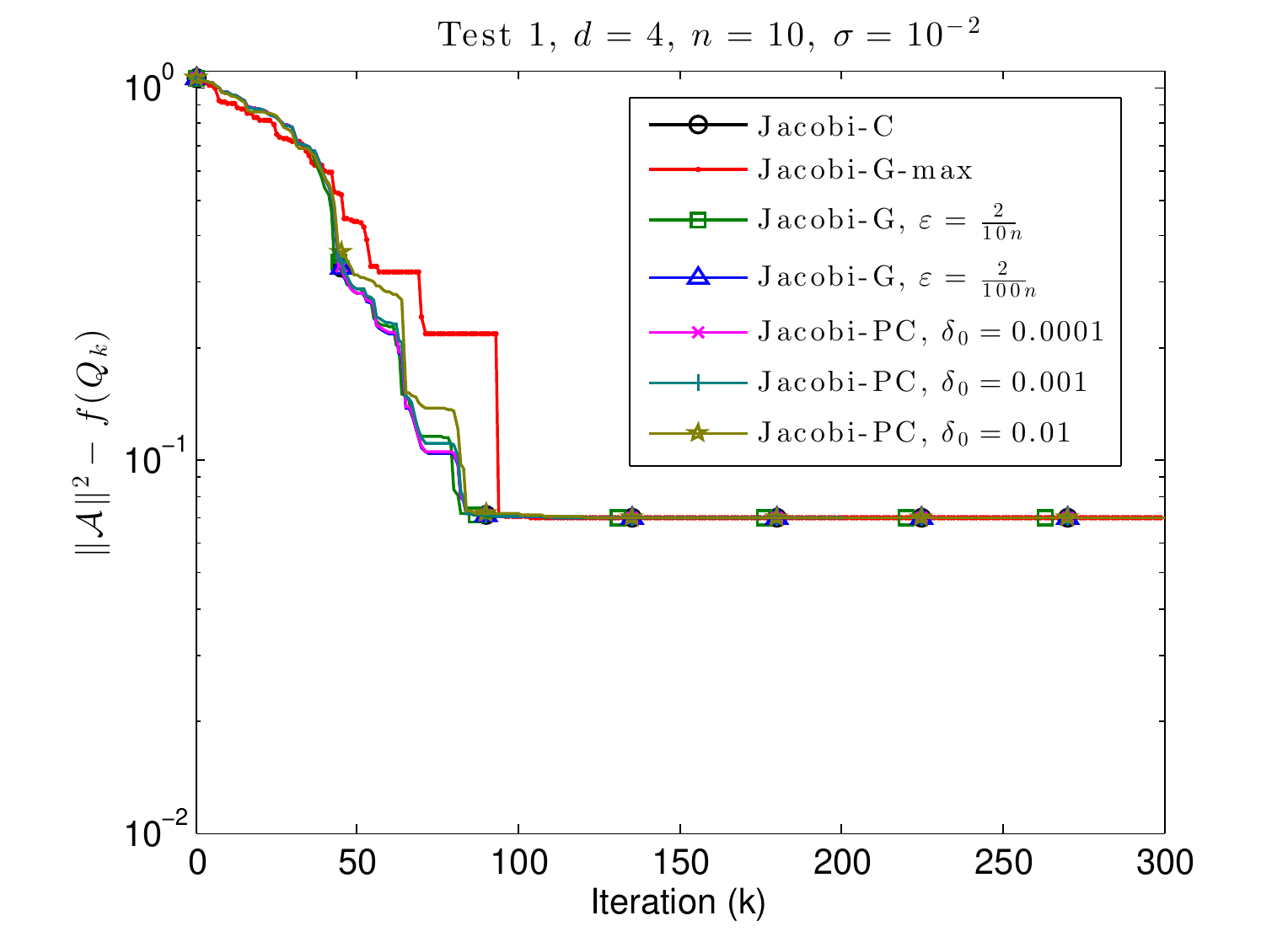}}
\caption{Equal values on the diagonal, higher noise.}
\label{fig:test1_sig2}
\end{figure}

As we see in  \cref{fig:test1_sig4,fig:test1_sig2}, in all the examples all the methods converge to the same cost function value.
We observe that the behavior of the Jacobi-PC algorithm it not too different from the behavior of the Jacobi-C algorithm.

The convergence of Jacobi-G-max is the fastest, but the difference is marginal.
Also, the Jacobi-G-max is typically slower in the beginning, but accelerates when the algorithm is closer to the local maximum.
Finally, if $\varepsilon$ is small, the behavior of Jacobi-G is almost indistinguishable from Jacobi-C, as pointed out in \cref{prop:jacobi_threhold}.

\subsection{Test 2: different values on the diagonal}\label{sec:test-2}
In this subsection, we consider $10\times 10\times 10$ and $10\times 10 \times 10\times 10$ tensors where the diagonal values are given by
\[
\tenselem{D}_{i\ldots i} = \frac{i}{\sqrt{385}}.
\]
We plot the results in \cref{fig:test2_sig4,fig:test2_sig2}.
\begin{figure}[tbhp]
\centering
\subfloat[$3$-rd order]{\includegraphics[width=0.5\textwidth]{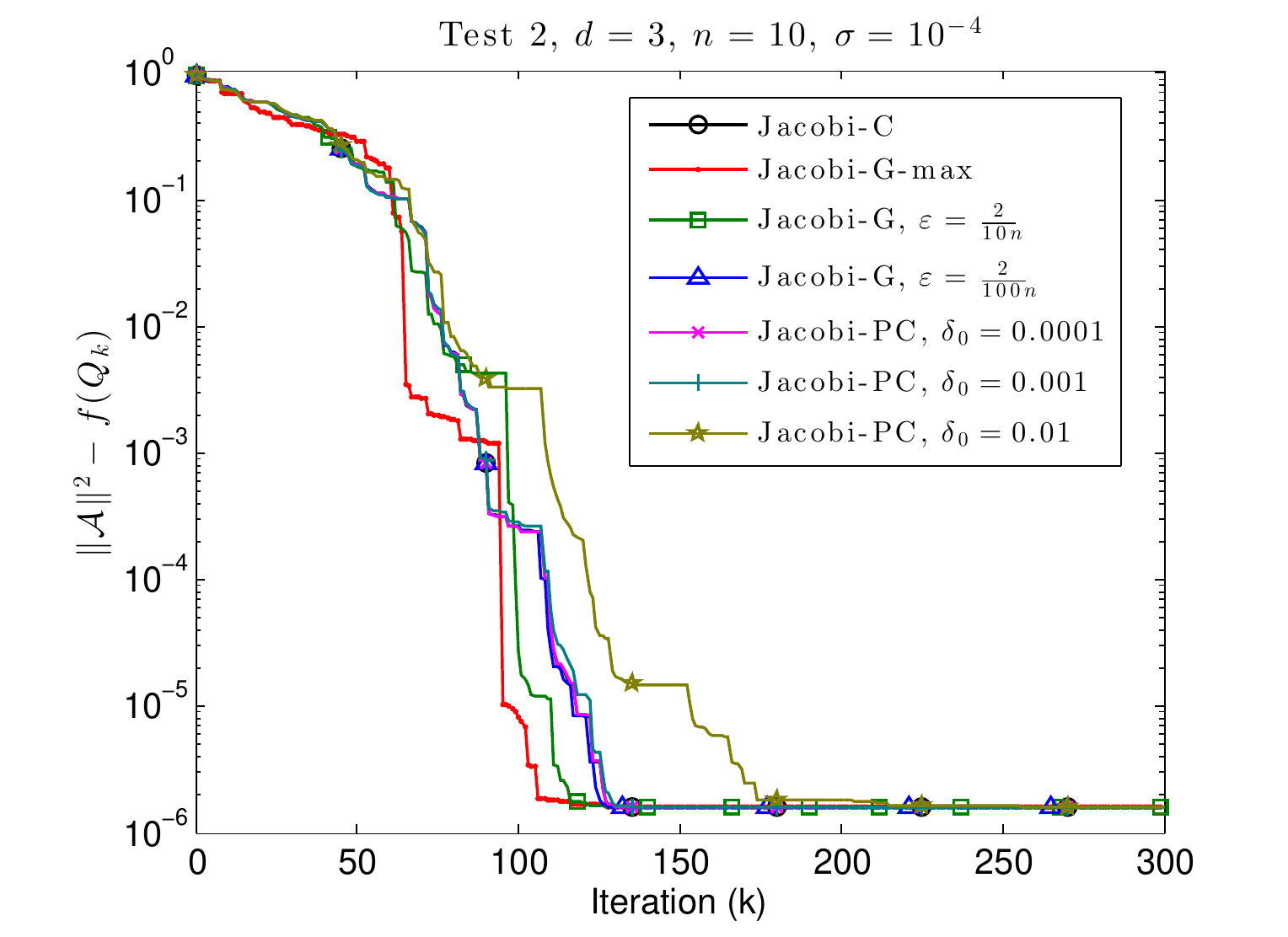}}\!\!\!
\subfloat[$4$-th order]{\includegraphics[width=0.5\textwidth]{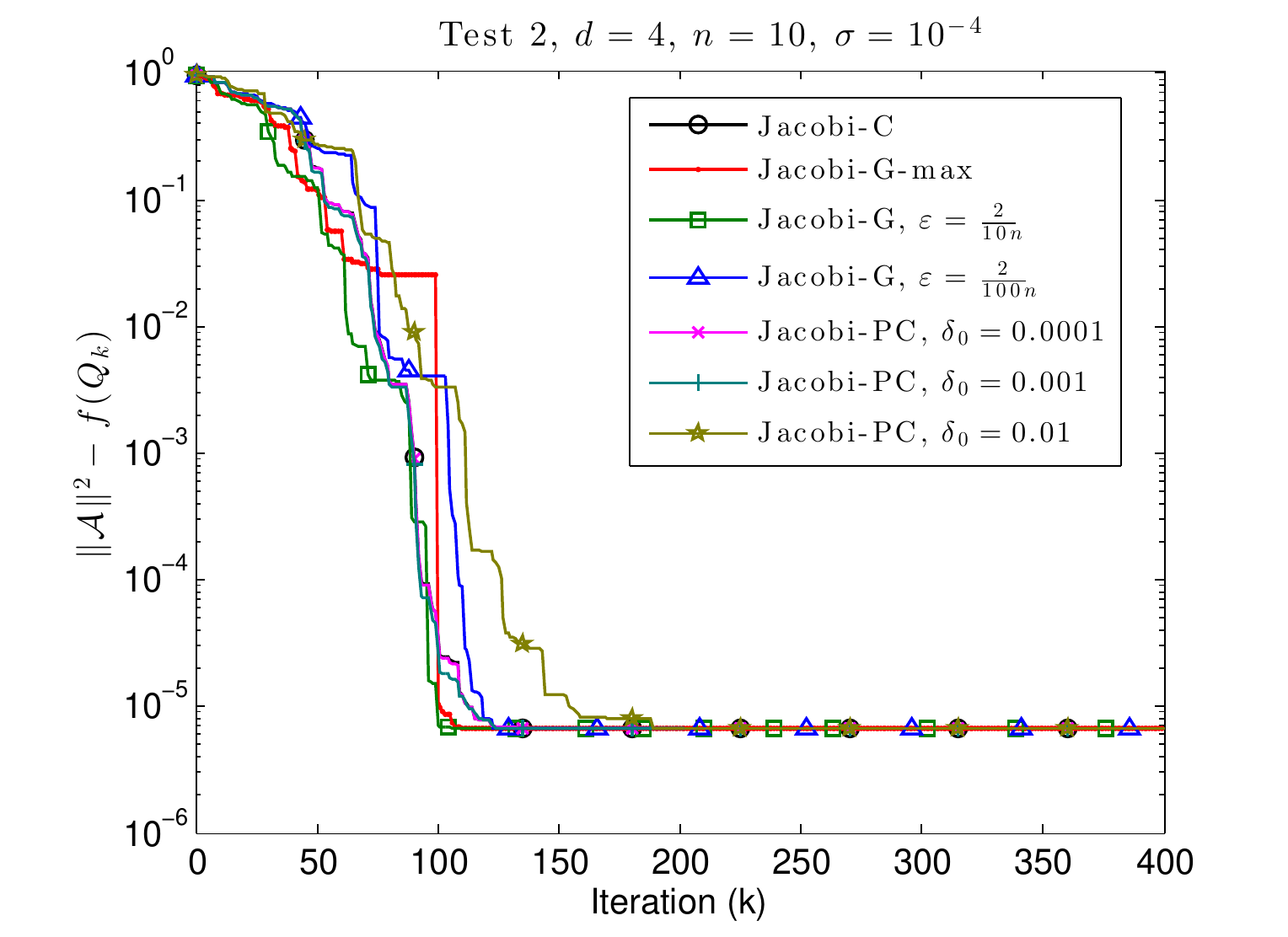}}
\caption{Different values on the diagonal, small noise.}
\label{fig:test2_sig4}
\end{figure}
\begin{figure}[tbhp]
\centering
\subfloat[$3$-rd order]{\includegraphics[width=0.5\textwidth]{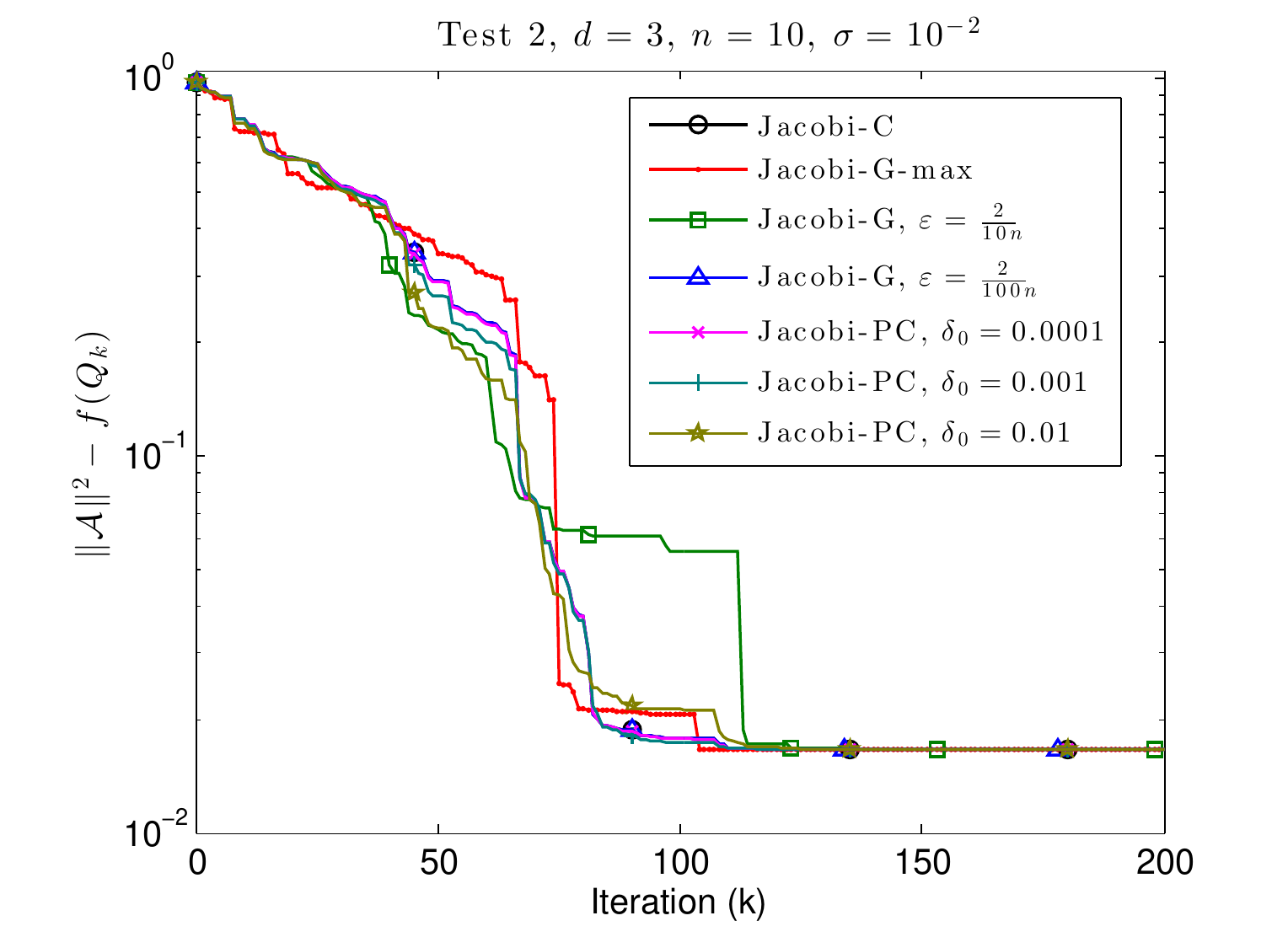}}\!\!\!
\subfloat[$4$-th order]{\includegraphics[width=0.5\textwidth]{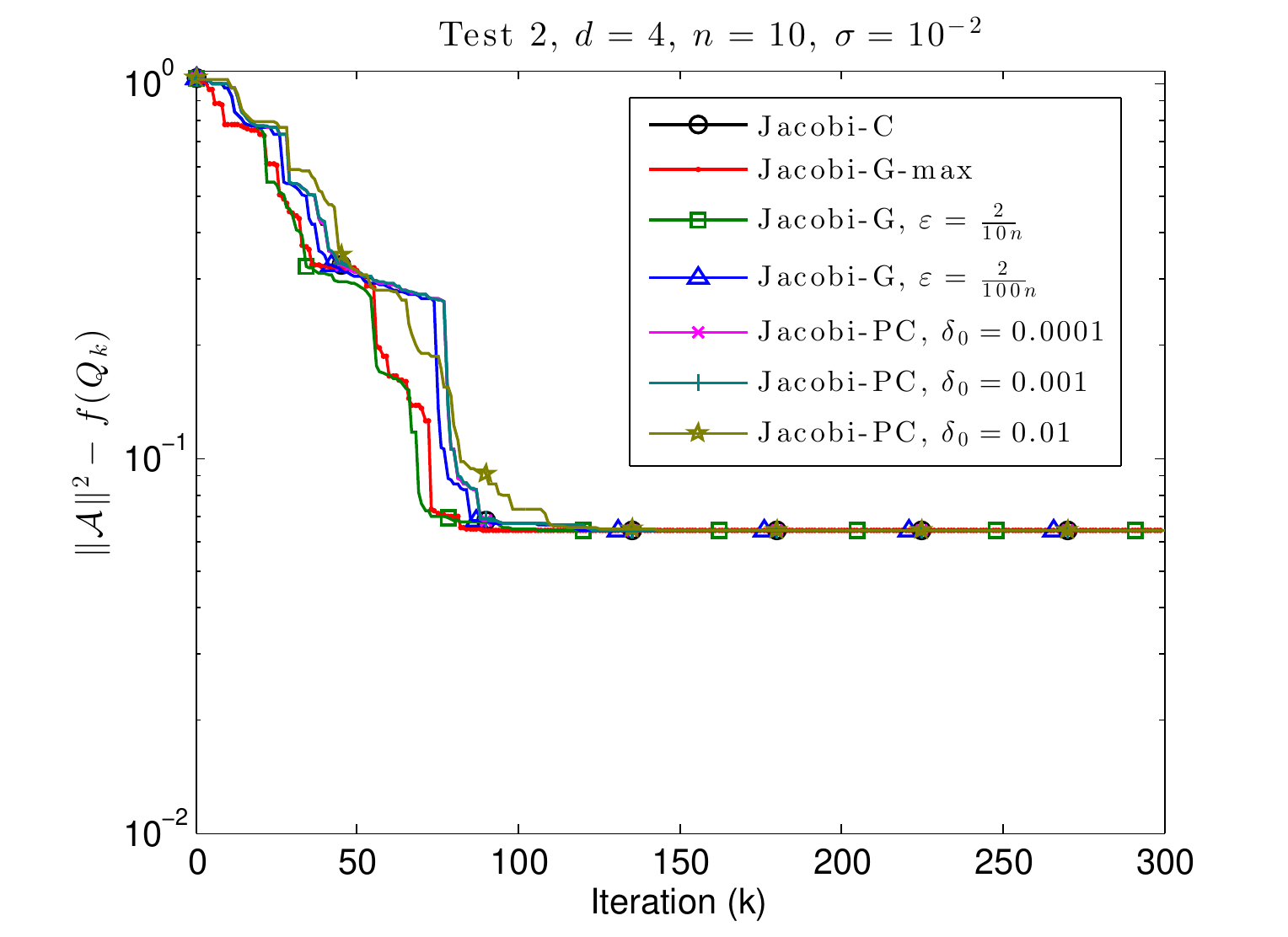}}
\caption{Different values on the diagonal, higher noise.}
\label{fig:test2_sig2}
\end{figure}

In \cref{fig:test2_sig4,fig:test2_sig2} we see that this scenario is less favorable for Jacobi-PC: if the value of $\delta_0$ is too high, then it slows down the convergence of the algorithm.
We also see that typically the Jacobi-G algorithms are the fastest, but the difference with Jacobi-C is not significant again.
Also for small values of $\varepsilon$, the behavior of Jacobi-G resembles the behavior of Jacobi-C.

\subsection{High noise and local minima}
\ku In this subsection, we consider the case of  \fin high noise.
\jl We repeat only the $4$th-order experiments (for a single tensor) from  \cref{sec:test-2} except with $\sigma = 10^{-1}$. We take two different realizations of $\tens{E}$ and plot the results in \cref{fig:test2_sig1}. \fin
\begin{figure}[tbhp]
\centering
\subfloat[$1$-st realization]{\includegraphics[width=0.5\textwidth]{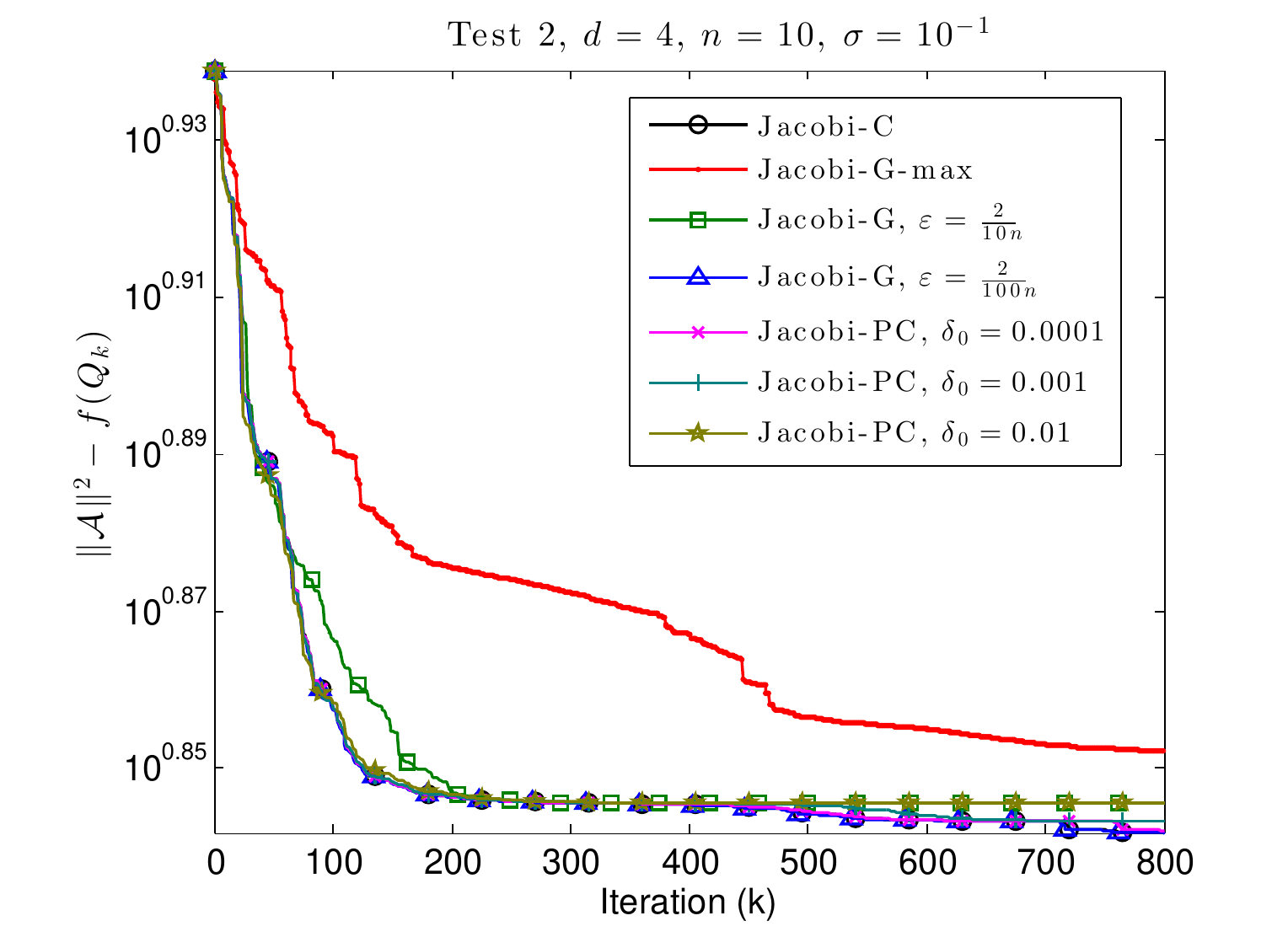}}\!\!\!
\subfloat[$2$-nd realization]{\includegraphics[width=0.5\textwidth]{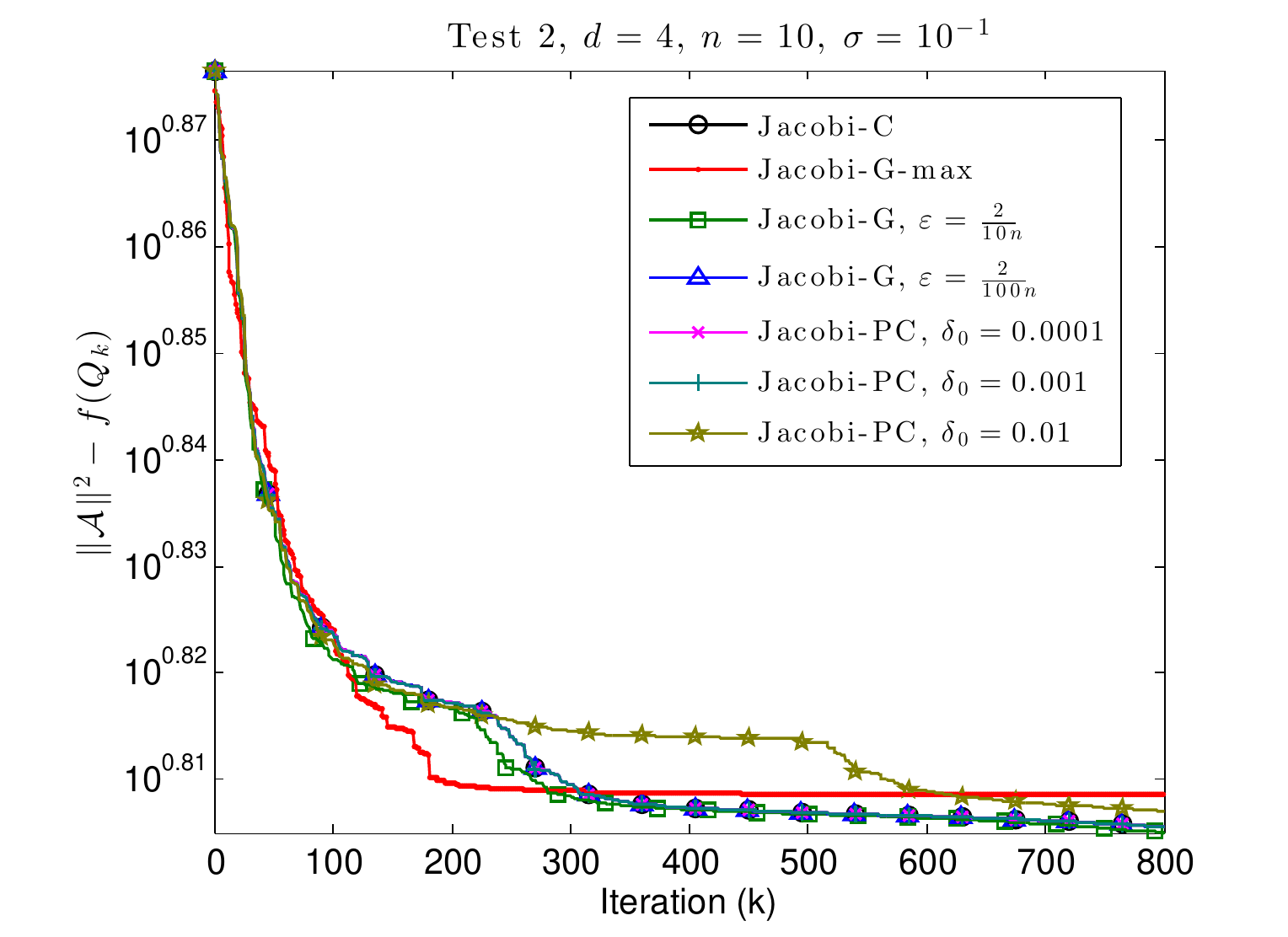}}
\caption{Different values on the diagonal, high noise, different noise realizations.}
\label{fig:test2_sig1}
\end{figure}

In \cref{fig:test2_sig1}, we see that the behavior of the algorithms is more erratic, and they may converge to different cost function values.
This is explained by the non-convexity of the problem and presence of different local minima, which is typical for the tensor approximation problems \cite{Isht11:local}.
Next, the Jacobi-G-max algorithm here has the worst performance.
This is also explained well by the non-convexity of the problem, because the compatibility of the Jacobi rotation with the gradient
(eqn. \eqref{eq:pair_selection_gradient}) may not be optimal.

\ku
\subsection{Simultaneous diagonalization}
We conclude the section by a small example of simultaneous diagonalization.
We take 4th order $10\times 10\times 10\times 10$ tensor $\tens{A}$ generated as in \crefrange{sec:test-1}{sec:test-2} (for the noise level $\sigma = 10^{-2}$), and consider its $10$ slices $\tens{B}^{(1)}, \ldots,\tens{B}^{(10)} \in \RR^{n\times n \times n}$ along the last dimension, \emph{i.e.}
\[
\tenselem{B}^{(i)}_{k,l,s} = \tenselem{A}_{k,l,s,i}.
\]
Then, we perform the joint diagonalization of tensors $\tens{B}^{(1)}, \ldots,\tens{B}^{(m)}$  (for $m=10$) and run the same algorithms as in the previous experiments, but for the cost function in the case of simultaneous diagonalization. The results are plotted in \cref{fig:test12_simt_sig2}.
\begin{figure}[tbhp]
\centering
\subfloat[Test $1$]{\includegraphics[width=0.5\textwidth]{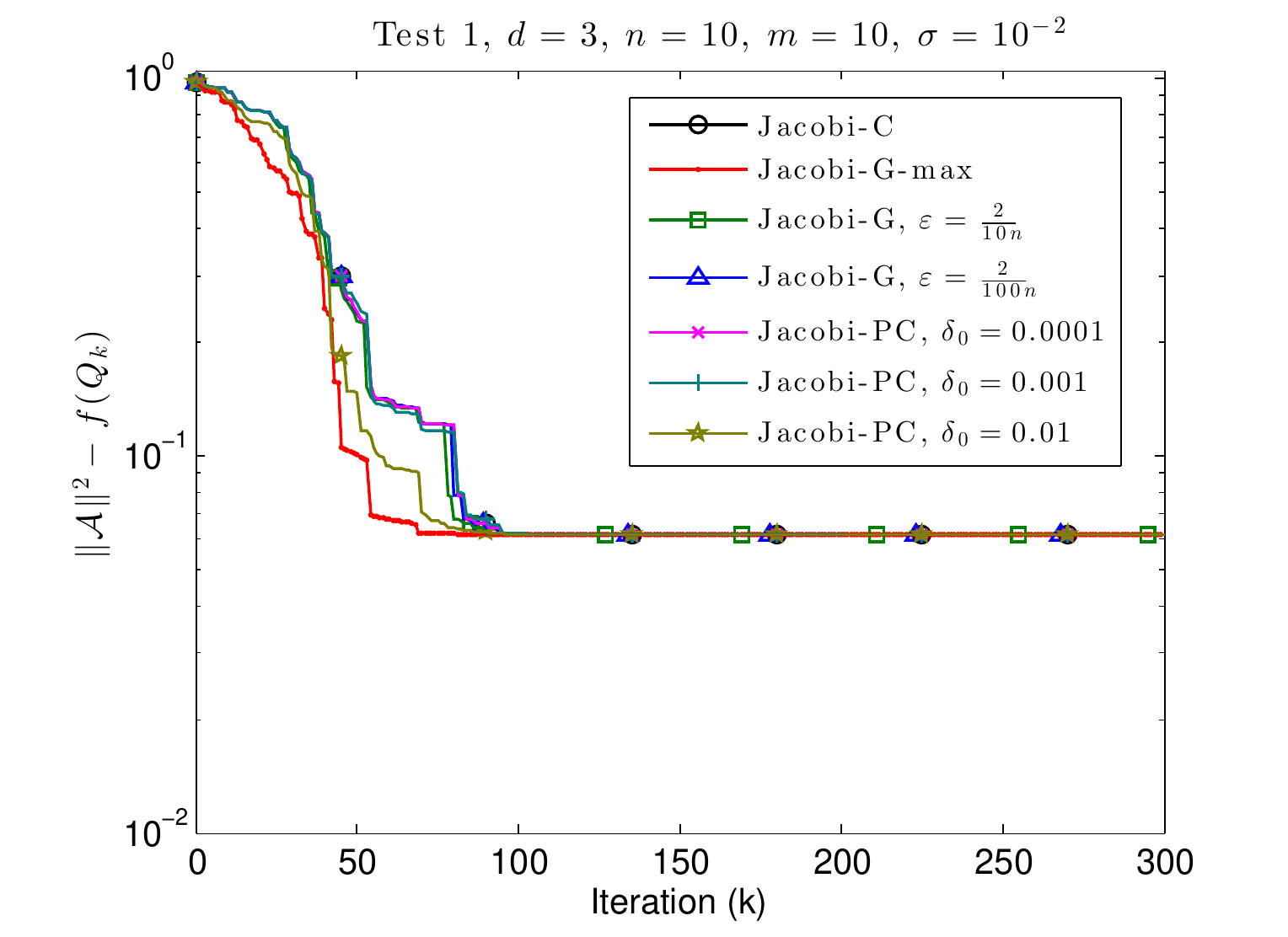}}\!\!\!
\subfloat[Test $2$]{\includegraphics[width=0.5\textwidth]{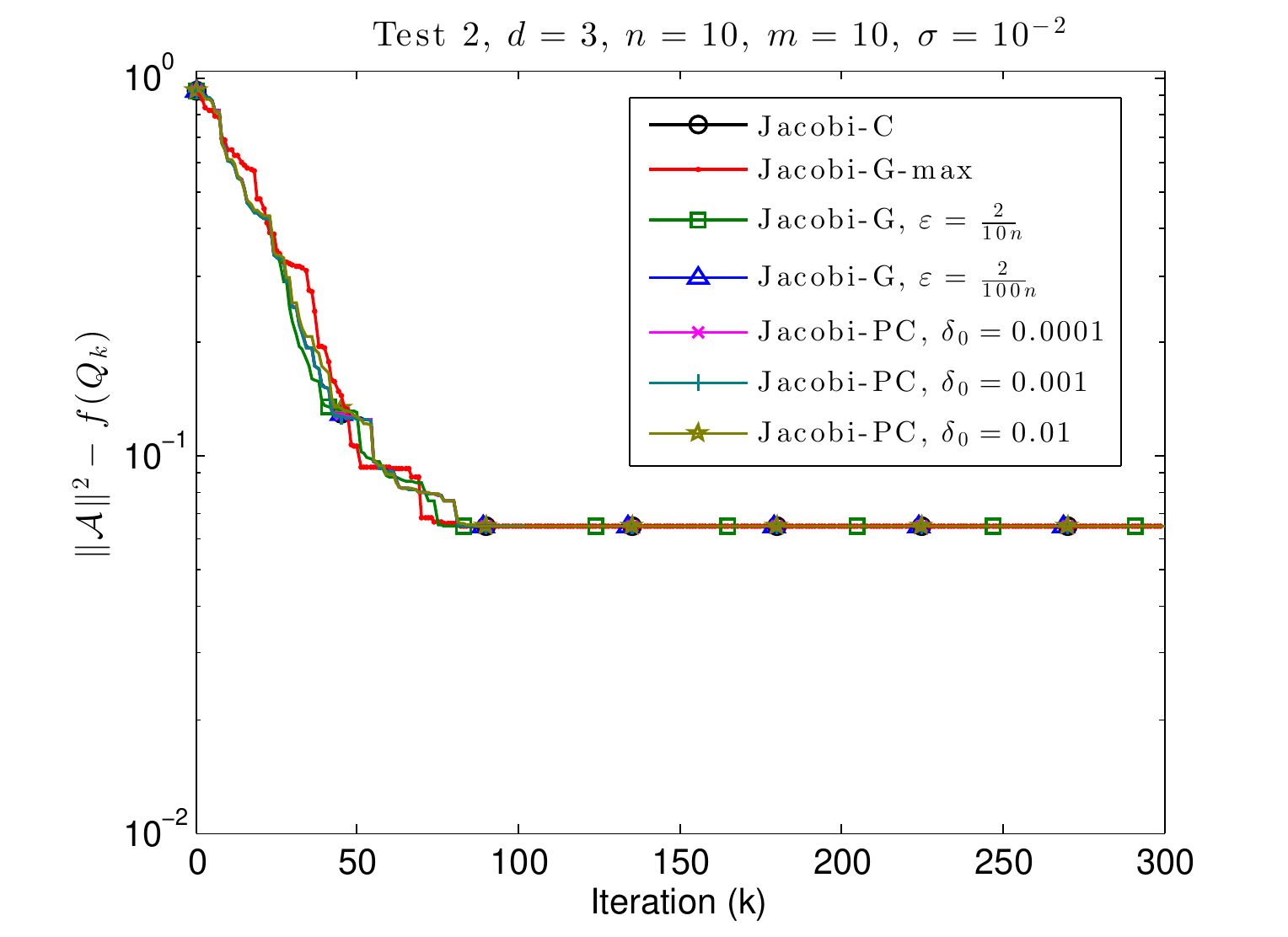}}
\caption{Simultaneous diagonlization of tensor slices.}
\label{fig:test12_simt_sig2}
\end{figure}

The results in \cref{fig:test12_simt_sig2} exhibit a similar behavior to the results in \crefrange{sec:test-1}{sec:test-2}.
When comparing the results of single tensor diagonalization for the same tensors, (see \cref{fig:test1_sig2} and \cref{fig:test2_sig2}, subfigures (b)), we can see that the results are comparable, and even the simultaneous diagonalization may yield a slightly higher cost function value. But the cost function  in this case is different because the tensor is not rotated along the last mode.
\fin

\section{Conclusions}
\label{sec:conclusions}
We showed that by modifying the well-known Jacobi CoM algorithm \cite{Como92:elsevier,Como94:ifac}
for {orthogonal symmetric tensor diagonalization problem},
it is possible to prove its global convergence.
{The global convergence of Jacobi-G algorithm \cite{IshtAV13:simax} 
is proved for the case of simultaneous orthogonal symmetric matrix (or 3rd-order tensor) diagonalization.
The global convergence for 4th-order case is still unknown.}
Our new proximal-type algorithm Jacobi-PC is globally convergent for a wide range of optimization problems, and shows a good performance in the numerical experiments.

%
%

\bibliographystyle{siamplain}
\bibliography{jacobi_simax}

\begin{thebibliography}{10}

\bibitem{AbsMA05:sjo}
{\sc P.~A. Absil, R.~Mahony, and B.~Andrews}, {\em Convergence of the iterates
  of descent methods for analytic cost functions}, SIAM Journal on
  Optimization, 16 (2005), pp.~531--547.

\bibitem{Absil08:Optimization}
{\sc P.-A. Absil, R.~Mahony, and R.~Sepulchre}, {\em Optimization Algorithms on
  Matrix Manifolds}, Princeton University Press, Princeton, NJ, 2008.

\bibitem{Bego16:antisymmetric}
{\sc E.~{Begovic} and D.~{Kressner}}, {\em {Structure-preserving low
  multilinear rank approximation of antisymmetric tensors}}, ArXiv e-prints,
  (2016), \url{https://arxiv.org/abs/1603.05010}.

\bibitem{Cardoso93:JADE}
{\sc J.~Cardoso and A.~Souloumiac}, {\em Blind beamforming for non-gaussian
  signals}, IEE Proceedings F (Radar and Signal Processing), 6 (1993),
  pp.~362--370.

\bibitem{Cichocki15:review}
{\sc A.~Cichocki, D.~Mandic, L.~D. Lathauwer, G.~Zhou, Q.~Zhao, C.~Caiafa, and
  H.~A. PHAN}, {\em Tensor decompositions for signal processing applications:
  From two-way to multiway component analysis}, IEEE Signal Processing
  Magazine, 32 (2015), pp.~145--163.

\bibitem{Como92:elsevier}
{\sc P.~Comon}, {\em Independent {C}omponent {A}nalysis}, in Higher Order
  Statistics, J.-L. Lacoume, ed., Elsevier, Amsterdam, London, 1992,
  pp.~29--38.

\bibitem{Como94:sp}
{\sc P.~Comon}, {\em Independent component analysis, a new concept ?}, Signal
  Processing, 36 (1994), pp.~287--314.

\bibitem{Como94:ifac}
{\sc P.~Comon}, {\em {Tensor Diagonalization, A useful Tool in Signal
  Processing}}, in {10th IFAC Symposium on System Identification (IFAC-SYSID)},
  M.~Blanke and T.~Soderstrom, eds., vol.~1, Copenhagen, Denmark, July 1994,
  {IEEE}, pp.~77--82.

\bibitem{Comon14:introduction}
{\sc P.~Comon}, {\em Tensors : A brief introduction}, IEEE Signal Processing
  Magazine, 31 (2014), pp.~44--53.

\bibitem{Comon08:symmetric}
{\sc P.~Comon, G.~Golub, L.-H. Lim, and B.~Mourrain}, {\em Symmetric tensors
  and symmetric tensor rank}, SIAM Journal on Matrix Analysis and Applications,
  30 (2008), pp.~1254--1279.

\bibitem{Como10:book}
{\sc P.~Comon and C.~Jutten}, eds., {\em Handbook of Blind Source Separation},
  Academic Press, Oxford, 2010.

\bibitem{Comon07:tensor}
{\sc P.~Comon and M.~Sorensen}, {\em Tensor diagonalization by orthogonal
  transforms}, Report ISRN I3S-RR-2007-06-FR,  (2007).

\bibitem{LDL09:handbook}
{\sc L.~{De Lathauwer}}, {\em Algebraic methods after prewhitening}, in
  {Handbook of Blind Source Separation, Independent Component Analysis and
  Applications}, P.~Comon and C.~Jutten, eds., Academic Press (Elsevier), 2010,
  pp.~155--177.

\bibitem{Lathauwer96:simultaneous}
{\sc L.~{De Lathauwer}, B.~{De Moor}, and J.~Vandewalle}, {\em Blind source
  separation by simultaneous third-order tensor diagonalization}, in 1996 8th
  European Signal Processing Conference (EUSIPCO 1996), 1996, pp.~1--4.

\bibitem{Lathauwer00:rank-1approximation}
{\sc L.~{De Lathauwer}, B.~{De Moor}, and J.~Vandewalle}, {\em On the best
  rank-1 and rank-(r1 ,r2 ,. . .,rn) approximation of higher-order tensors},
  SIAM Journal on Matrix Analysis and Applications, 21 (2000), pp.~1324--1342.

\bibitem{Lathauwer01:ICA}
{\sc L.~{De Lathauwer}, B.~{De Moor}, and J.~Vandewalle}, {\em Independent
  component analysis and (simultaneous) third-order tensor diagonalization},
  IEEE Transactions on Signal Processing, 49 (2001), pp.~2262--2271.

\bibitem{IshtAV13:simax}
{\sc M.~Ishteva, P.-A. Absil, and P.~{Van Dooren}}, {\em Jacobi algorithm for
  the best low multilinear rank approximation of symmetric tensors}, SIAM J.
  Matrix Anal. Appl., 2 (2013), pp.~651--672.

\bibitem{Isht11:local}
{\sc M.~Ishteva, P.-A. Absil, S.~{Van Huffel}, and L.~{De Lathauwer}}, {\em
  {Tucker compression and local optima}}, Chemometrics and
  Intelligent Laboratory Systems, 106 (2011), pp.~57--64,
  \url{https://doi.org/10.1016/j.chemolab.2010.06.006}.

\bibitem{Kolda01:Orthogonal}
{\sc T.~G. Kolda}, {\em Orthogonal tensor decompositions}, SIAM Journal on
  Matrix Analysis and Applications, 23 (2001), pp.~243--255.

\bibitem{Kolda09:review}
{\sc T.~G. Kolda and B.~W. Bader}, {\em Tensor decompositions and
  applications}, SIAM Review, 51 (2009), pp.~455--500.

\bibitem{krantz2002primer}
{\sc S.~Krantz and H.~Parks}, {\em {A Primer of Real Analytic
  Functions}}, A Primer of Real Analytic Functions, Birkh{\"a}user Boston,
  2002.

\bibitem{Lim09:nonnegtive}
{\sc L.-H. Lim and P.~Comon}, {\em Nonnegative approximations of nonnegative
  tensors}, Journal of Chemometrics, 23 (2009), pp.~432--441.

\bibitem{MoraV08:simax}
{\sc C.~D.~M. Martin and C.~F.~V. Loan}, {\em A jacobi-type method for
  computing orthogonal tensor decompositions}, SIAM Journal on Matrix Analysis
  and Applications, 30 (2008), pp.~1219--1232.

\bibitem{Pari14:proximal}
{\sc N.~Parikh and S.~Boyd}, {\em Proximal algorithms}, Found. Trends Optim., 1
  (2014), pp.~127--239.

\bibitem{SU15:pro}
{\sc R.~Schneider and A.~Uschmajew}, {\em Convergence results for projected
  line-search methods on varieties of low-rank matrices via lojasiewicz
  inequality}, SIAM Journal on Optimization, 25 (2015), pp.~622--646.

\bibitem{Usch15:pjo}
{\sc A.~Uschmajew}, {\em A new convergence proof for the higher-order power
  method and generalizations}, Pac. J. Optim., 11 (2015), pp.~309--321.

\bibitem{Chu14:GlobalALS}
{\sc L.~Wang and M.~T. Chu}, {\em On the global convergence of the alternating
  least squares method for rank-one approximation to generic tensors}, SIAM
  Journal on Matrix Analysis and Applications, 35 (2014), pp.~1058--1072.

\end{thebibliography}
\end{document}